\theoremstyle{plain}
\newtheorem{theorem}{Theorem}[section]
\newtheorem{corollary}[theorem]{Corollary}
\newtheorem{proposition}[theorem]{Proposition}
\newtheorem{lemma}[theorem]{Lemma}
\newtheorem{lem}[theorem]{Lemma}
\newtheorem{definition}[theorem]{Definition}
\theoremstyle{remark}
\newtheorem{remark}[theorem]{Remark}
\newtheorem{rem}[theorem]{Remark}
\newtheorem{example}[theorem]{Example}
\newlist{propenum}{enumerate}{1} 
\setlist[propenum]{label=(\roman*)}
\DeclareMathOperator\diag{Diag}               
\DeclareMathOperator\spec{Spec}               
\DeclareMathOperator\mult{m}               
\DeclarePairedDelimiter\abs{\lvert}{\rvert}   
\DeclarePairedDelimiter\norm{\lVert}{\rVert}  
\DeclarePairedDelimiter\floor{\lfloor}{\rfloor}
\newcommand{\cb}{\ensuremath{\mathscr{B}}}
\newcommand{\cf}{\ensuremath{\mathscr{F}}}
\newcommand{\ck}{\ensuremath{\mathscr{K}}}
\newcommand{\ind}[1]{\mathds{1}_{#1}}
\newcommand{\ca}{\ensuremath{\mathscr{A}}}
\newcommand{\un}{\ensuremath{\mathds{1}}}
\newcommand{\cpa}{\mathcal{P}^\mathrm{Anti}}
\newcommand{\F}{\ensuremath{\mathcal{F}}}
\newcommand{\AF}{\ensuremath{\mathcal{F}^\mathrm{Anti}}}
\newcommand{\kk}{\ensuremath{\mathrm{k}}}
\newcommand{\rd}{\ensuremath{\mathrm{d}}}
\newcommand{\cll}{\ensuremath{\mathcal{L}}}
\newcommand{\R}{\ensuremath{\mathbb{R}}}
\newcommand{\C}{\ensuremath{\mathbb{C}}}
\newcommand{\N}{\ensuremath{\mathbb{N}}}
\newcommand{\cp}{\ensuremath{\mathcal{P}}}
\newcommand{\FF}{\ensuremath{\mathbf{F}}}
\newcommand{\as}{\text{ a.s.}}
\newcommand{\lb}{[\![}
\newcommand{\costu}{\ensuremath{C_\mathrm{uni}}} 
\newcommand{\zero}{\ensuremath{\mathbb{0}}}
\newcommand{\cmir}{\ensuremath{c_\star}}
\newcommand{\cmar}{\ensuremath{c^\star}}
\newcommand{\cmax}{c_{\max}}
\newcommand{\mir}{\ensuremath{R_{e\star}}}
\newcommand{\mar}{\ensuremath{R_e^\star}}
\newcommand{\Cinf}{\ensuremath{C_\star}} 
\newcommand{\Csup}{\ensuremath{C^\star}} 
\newcommand{\oa}{\ensuremath{\Omega_\mathrm{a}}}
\newcommand{\oi}{\ensuremath{\Omega_\mathrm{i}}}
\newcommand{\pos}{\ensuremath{\mathrm{p}}}
\newcommand{\nega}{\ensuremath{\mathrm{n}}}
\date{\today}
\author{Jean-François Delmas}
\address{Jean-François Delmas,
  CERMICS, \'{E}cole des Ponts, France}
\email{jean-francois.delmas@enpc.fr}
\author{Dylan Dronnier}
\address{Dylan Dronnier,
  CERMICS, \'{E}cole des Ponts, France}
\email{dylan.dronnier@enpc.fr}
\author{Pierre-André Zitt}
\address{Pierre-André Zitt, LAMA, Université Gustave Eiffel, France}
\email{pierre-andre.zitt@univ-eiffel.fr}
\newcommand{\loss}{{R_e}}
\newcommand{\grR}{R_e[\kk]}
\newcommand{\grS}{\spec[\kk]}
\newsavebox{\largestimage}
\title[Effective reproduction number]{Effective reproduction number: convexity, invariance and
  cordons sanitaires} 
\begin{document}

\thanks{This work is partially supported by Labex Bézout reference ANR-10-LABX-58}

\subjclass[2010]{92D30, 47B34, 47A25, 58E17}

\keywords{Kernel operator, vaccination strategy, effective reproduction number,
multi-objective optimization, Pareto frontier, maximal independent set}

\begin{abstract} 
  We consider the problem of optimal allocation strategies for a (perfect) vaccine in an
  infinite-metapopulation model (including SIS, SIR, SEIR, \ldots), when the loss function is
  given by the effective reproduction number $R_e$, which is defined as the spectral
  radius of the effective next generation matrix (in finite dimension) or more generally
  of the effective next generation operator (in infinite dimension). We give sufficient
  conditions for~$R_e$ to be a convex or a concave function of the vaccination strategy.
  Then, following a previous work, we consider the bi-objective problem of minimizing
  simultaneously the cost and the loss of the vaccination strategies. In particular, we
  prove that a cordon sanitaire might not be optimal, but it is still better than the 
  ``worst'' vaccination strategies. Inspired by the graph theory, we compute the minimal
  cost which ensures that no infection occurs using independent sets. Using Frobenius
  decomposition of the whole population into irreducible sub-populations, we give some
  explicit formulae for optimal (``best'' and ``worst'') vaccinations strategies.
  Eventually, we provide equivalence properties on models which ensure that the function
  $R_e$ is unchanged.
\end{abstract}

\maketitle

\section{Introduction}

\subsection{Vaccination in metapopulation models}

The study of vaccination strategies for metapopulation models with $N\geq 2$
sub-populations, naturally leads to an easily stated linear algebra problem: given a 
matrix $K$, of size $N\times N$, with non-negative entries, what can be said about the
function
\begin{equation}\label{eq:informalPb}
  R_e:
  \begin{cases}
    \Delta &\to \mathbb{R}, \\
    \eta &\mapsto \text{spectral radius of } K\cdot \diag(\eta),
  \end{cases}
\end{equation}
where $\Delta= [0,1]^N $, $\diag(\eta)$ denotes the $N\times N$ matrix with diagonal
elements $\eta=(\eta_1, \ldots, \eta_N)$, and the spectral radius is the largest modulus
of the eigenvalues. In this form, the problem appears for instance, with a mathematical
point of view, in Elsner and Hadeler~\cite{elsner_15}, see also Friedland~\cite{Fried80}
and Nussbaum~\cite{nussbaum_cvx}. 

\medskip

In metapopulation epidemiological models, the indices $i=1,\ldots, N$ correspond to
various sub-populations with respective proportional size $\mu_1, \ldots, \mu_{N}$. 
Following \cite{hill-longini-2003}, the entry $K_{ij}$ of the so-called next-generation
matrix $K$ is equal to the expected number of secondary infections for people in subgroup
$i$ resulting from a single randomly selected non-vaccinated infectious person in subgroup
$j$. Finally, $\eta$ represents a vaccination strategy, that is, $\eta_i$ is the
\textbf{fraction of non-vaccinated} individuals in the $i$\textsuperscript{th}
sub-population; \textbf{thus $\eta_i = 0$ when the $i$\textsuperscript{th} sub-population
is fully vaccinated, and $1$ when it is not vaccinated at all}. (This seemingly unnatural
convention is in particular motivated by the simple form of
Equation~\eqref{eq:informalPb}). So, the strategy $\un\in \Delta$, with all its entries
equal to 1, corresponds to an entirely non-vaccinated population. The quantity $R_e$,
referred to as the \emph{effective reproduction number}, may then be interpreted as the
mean number of infections coming from a typical case. In particular, we denote by
$R_0=R_e(\un)$ the so-called \emph{basic reproduction number} associated to the
metapopulation epidemiological model. With the interpretation of the function~$R_e$ in
mind, it is then very natural to minimize it under a constraint on the cost $C(\eta)$ of
the vaccination strategies $\eta$. A natural choice for the cost function is given by the
uniform cost $C(\eta)=1- \sum_i \eta_i \mu_i$, which corresponds to the fraction of
vaccinated individuals in the population. This constrained optimization problem appears in
most of the literature for designing efficient vaccination strategies for multiple
epidemic situation (SIR/SEIR), see \cite{EpidemicsInHeCairns1989, hill-longini-2003,
TheMostEfficiDuijze2016, CriticalImmuneMatraj2012, poghotanyan_constrained_2018,
OptimalInfluenEnayat2020, IdentifyingOptZhao2019}. Note that in some of these references,
the effective reproduction number is defined as the spectral radius of the matrix
$\diag(\eta) \cdot K$. Since the eigenvalues of $\diag(\eta) \cdot K$ are exactly the
eigenvalues of the matrix $K\cdot \diag(\eta)$, this actually defines the same function
$R_e$. In Section~\ref{sec:ST-TNG}, we discuss the generalization of the effective
reproduction number to the kernel model that offers a finer description of the contacts
within the population.

\medskip

The goal of this paper is to prove a number of properties of~$R_e$, that shed a light on
how to vaccinate in the best possible way. In previous
works~\cite{delmas_infinite-dimensional_2020, ddz-theo}, we introduced a general
infinite-dimensional kernel framework in which the matrix formulation appears as a special
finite-dimensional case. We state our results in this general framework, but for ease of
the presentation, we shall stick to the matrix formulation in this introduction. Finally,
the results of this paper are applied and illustrated in detail on various examples in the
companion papers~\cite{ddz-reg, ddz-mono, ddz-2pop}.

\subsection{Convexity properties of the effective reproduction number}

Given the importance of convexity to solve optimization problems efficiently, it is
natural to look for conditions on the matrix $K$ that imply convexity or concavity for the
map~$R_e$ defined by \eqref{eq:informalPb}. In their investigation of the behavior of this
map in the finite dimensional matrix setting, Hill and Longini conjecture
in~\cite{hill-longini-2003} sufficient spectral conditions to get either concavity or
convexity. More precisely, guided by explicit examples, they state that $R_e$ should be
convex if all the eigenvalues of $K$ are non negative real numbers, and that it should be
concave if all eigenvalues are real, with only one positive eigenvalue.

Our first series of results show that, while this conjecture cannot hold in full
generality, see Section~\ref{sec:comparison_Hill_Longini}, it is true under an additional
symmetry hypothesis. Recall that a matrix $K$ is called diagonally symmetrizable if there
exist positive numbers $(d_1,\ldots d_{N})$ such that for all $i,j$, $d_i K_{ij} = d_j
K_{ji}$. Such a matrix is necessarily diagonalizable with real eigenvalues. The following
result, which appears below in the text as Theorem~\ref{th:hill-longini-meta}, settles the
conjecture for diagonally symmetrizable matrices. It is a special case of the more general
Theorem~\ref{th:hill-longini}, which holds in the infinite dimensional kernel setting, and
for which the symmetry assumption has to be carefully worded. Let us mention that the
eigenvalue $\lambda_1$ in the theorem below is non-negative and is equal to the spectral
radius of $K$, that is, $\lambda_1=R_e(\un)=R_0$, thanks to the Perron-Frobenius theory. 

\begin{theorem}
  Let $K$ be an $N\times N$ matrix with non-negative entries. Suppose that $K$ is
  diagonally symmetrizable with eigenvalues $\lambda_1\geq\lambda_2\cdots \geq
  \lambda_{N}$.
  \begin{propenum}
  \item\label{item:cairns} If $\lambda_{N}\geq 0$, then the function $R_e$ is convex. 
  \item If $\lambda_2 \leq 0$, then the function $R_e$ is concave. 
  \end{propenum}
\end{theorem}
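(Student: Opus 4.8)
The plan is to reduce everything to a symmetric non-negative matrix and then exhibit $R_e$ as, respectively, a supremum and an infimum of functions that are \emph{affine} in $\eta$; convexity and concavity then follow for free. Since $K$ is diagonally symmetrizable, I first pick positive $d_1,\dots,d_N$ with $d_iK_{ij}=d_jK_{ji}$, set $D=\diag(d_1,\dots,d_N)$ and $S=D^{1/2}KD^{-1/2}$. Then $S$ is symmetric with non-negative entries, shares the eigenvalues $\lambda_1\ge\cdots\ge\lambda_N$ with $K$, and because $\diag(\eta)$ commutes with $D$ one has $S\diag(\eta)=D^{1/2}\,K\diag(\eta)\,D^{-1/2}$, so $R_e(\eta)$ is also the spectral radius of $S\diag(\eta)$. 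For $\eta$ in the open cube, conjugating by $\diag(\eta)^{1/2}$ shows $S\diag(\eta)$ is similar to the symmetric non-negative matrix $M(\eta)=\diag(\eta)^{1/2}S\diag(\eta)^{1/2}$, whose spectral radius is its largest eigenvalue by Perron–Frobenius; the substitution $y=\diag(\eta)^{1/2}x$ in the Rayleigh quotient then yields the working formula
\[ R_e(\eta)=\max_{y\neq 0}\frac{\langle y,Sy\rangle}{\sum_i y_i^2/\eta_i}, \]
which I extend to all of $[0,1]^N$ by continuity of the spectral radius.

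For part (i), $\lambda_N\ge 0$ means $S$ is positive semidefinite, so I factor $S=B^{\top}B$. Writing $b_i$ for the columns of $B$, the matrices $M(\eta)=(B\diag(\eta)^{1/2})^{\top}(B\diag(\eta)^{1/2})$ and $B\diag(\eta)B^{\top}=\sum_i\eta_i\,b_ib_i^{\top}$ share their nonzero spectrum, so $R_e(\eta)=\lambda_{\max}\!\big(\sum_i\eta_i\,b_ib_i^{\top}\big)$. The argument inside is affine in $\eta$, and $\lambda_{\max}$ of a symmetric matrix is the supremum over unit vectors $u$ of $u\mapsto\langle u,(\cdot)u\rangle$, i.e.\ a supremum of affine functions of $\eta$; hence $R_e$ is convex.

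For part (ii), $\lambda_2\le 0$ means $S$ has at most one positive eigenvalue (exactly one if $S\neq 0$, the trivial case being clear). The key tool is the \emph{reverse} Cauchy–Schwarz inequality this forces: for any $y_0$ with $\langle y_0,Sy_0\rangle>0$ and any $y$, the symmetric $2\times2$ matrix of the form $\langle\cdot,S\cdot\rangle$ in the basis $\{y,y_0\}$ inherits at most one positive eigenvalue (a subspace cannot carry more positive squares than the whole form), so its determinant is $\le 0$, giving $\langle y,Sy\rangle\,\langle y_0,Sy_0\rangle\le\langle y,Sy_0\rangle^2$. Inserting this into the working formula and maximizing the resulting bound over $y$ by the \emph{ordinary} Cauchy–Schwarz inequality in the weights $\sum_i y_i^2/\eta_i$ produces, for every admissible reference $y_0$,
\[ R_e(\eta)\le \frac{\sum_i (Sy_0)_i^2\,\eta_i}{\langle y_0,Sy_0\rangle}. \]
Taking $y_0$ to be the top generalized eigenvector, which satisfies $Sy_0=R_e(\eta)\diag(1/\eta)y_0$, a one-line substitution shows this bound is attained. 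Thus $R_e(\eta)=\inf_{y_0}\big(\sum_i(Sy_0)_i^2\,\eta_i/\langle y_0,Sy_0\rangle\big)$ is an infimum of affine functions of $\eta$, hence concave.

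I expect the main obstacle to be the two delicate points in part (ii): establishing the reverse Cauchy–Schwarz inequality cleanly from the inertia/interlacing count, and verifying attainment of the infimum so that the upper bound is genuinely equal to $R_e$ rather than merely dominating it. A subsidiary but routine point, common to both parts, is the passage from the open cube (where the eigenvalue is simple and the Rayleigh formula is transparent) to the closed cube $[0,1]^N$, which I will handle by continuity of $R_e$ together with the fact that a continuous function, convex (resp.\ concave) on a dense convex subset, is convex (resp.\ concave) on the closure.
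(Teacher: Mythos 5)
Your proof is correct. Part~(i) takes essentially the same route as the paper: after symmetrizing $K$ into $S=D^{1/2}KD^{-1/2}$ and conjugating by $\diag(\eta)^{1/2}$, both arguments exhibit $R_e(\eta)$ as a supremum over vectors of quantities that are linear in $\eta$ (the paper via the square root $Q$ of the operator and the Rayleigh quotient of $QM_\eta Q$, you via the factorization $S=B^{\top}B$; these are interchangeable). Part~(ii) is genuinely different. The paper restricts to segments $\eta_\alpha=(1-\alpha)\eta_0+\alpha\eta_1$ with endpoints bounded away from $0$, uses Sylvester's inertia theorem (Theorem~\ref{lem:sylvester}) to keep $\pos(T_{\kk\eta_\alpha})=1$ along the segment, and then invokes second-order analytic perturbation theory to compute $R''(\alpha)$ explicitly as a sum of terms each carrying a negative eigenvalue $\lambda_n$ as a factor, whence $R''\le 0$. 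You instead derive a dual variational formula: the reverse Cauchy--Schwarz inequality $\langle y,Sy\rangle\langle y_0,Sy_0\rangle\le\langle y,Sy_0\rangle^2$ (valid because a quadratic form with one positive square keeps at most one positive square on any two-dimensional subspace, and the $2\times 2$ Gram matrix then has non-positive determinant) combines with the ordinary Cauchy--Schwarz inequality to give $R_e(\eta)\le \sum_i\eta_i(Sy_0)_i^2/\langle y_0,Sy_0\rangle$ for every $y_0$ with $\langle y_0,Sy_0\rangle>0$, and the bound is attained at the Perron eigenvector of $\diag(\eta)S$, which has positive eigenvalue on the open cube because, by inertia, $\diag(\eta)^{1/2}S\diag(\eta)^{1/2}$ retains exactly one positive eigenvalue unless $S=0$ (the trivial case, forced by $\lambda_1\le0$ together with $\operatorname{tr}S\ge0$). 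This exhibits $R_e$ as an infimum of affine functions, pleasingly mirroring part~(i), and is more elementary: it avoids perturbation theory entirely and never needs simplicity of the leading eigenvalue along the segment. What the paper's perturbative computation buys is a proof that transfers, with density and approximation arguments, to the infinite-dimensional kernel setting of Theorem~\ref{th:hill-longini}, which is the form actually needed there; for the matrix statement at hand your argument is complete, the remaining routine points (the degenerate case $S=0$ and the passage from the open to the closed cube by continuity of the spectral radius) being correctly handled.
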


Note that the case~\ref{item:cairns} appears already in
Cairns~\cite{EpidemicsInHeCairns1989}; see also \cite{Fried80,feng_elaboration_2015} and 
Section~\ref{sec:comparison_Hill_Longini} below for a detailed comparison with existing
results.

\medskip

It is easy to see that if $K$ and $K'$ are diagonally similar up to transposition, they
define the same function~$R_e$ (see \cite{ddz-pres} for more results in this direction).
We check in Section \ref{sec:equivalent} that this is essentially still true in the
generalized kernel setting.

\subsection{Properties of Pareto and anti-Pareto optima, cordons sanitaires}

Let us now come back to the problem of finding optimal vaccination strategies. In contrast
with our previous work~\cite{ddz-theo}, where we put minimal assumptions on the loss
function which measures the efficiency of the vaccination strategies, we consider here
that the loss of a strategy $\eta$ is given by its effective reproduction number
$R_e(\eta)$. This focus and the fact that we consider strictly decreasing cost functions
(because vaccinating more costs more, see Section~\ref{sec:P-et-AP}), allow us to simplify
some of the statements of~\cite{ddz-theo} and to give additional specific results.

The problem of minimizing the effective reproduction number while keeping the cost of the
vaccination low leads to a bi-objective optimization problem. We recall in
Section~\ref{sec:P-et-AP} the setting introduced in detail in~\cite{ddz-theo} for a
general framework. One can identify Pareto optimal and anti-Pareto optimal vaccinations
strategies, informally ``best'' and ``worst'' vaccination strategies, and consider the
Pareto frontier $\F$ (resp. anti-Pareto frontier $\AF$) as the outcomes $(C(\eta),
R_e(\eta))$ of the Pareto (resp. anti-Pareto) optimal strategies $\eta$.

In Figure~\ref{fig:sym-circle-pareto}, we have plotted in red the Pareto frontier and in a
dashed red line the anti-Pareto frontier when the next-generation matrix is the adjacency
matrix of the non-oriented cycle graph with $N=12$ nodes from Figure~\ref{fig:cycle-graph}
and Example~\ref{ex:cycle-graph}, see also Example~\ref{exple:meta}.

\begin{figure}
  \begin{subfigure}[T]{.5\textwidth}
    \centering
    \includegraphics[page=5]{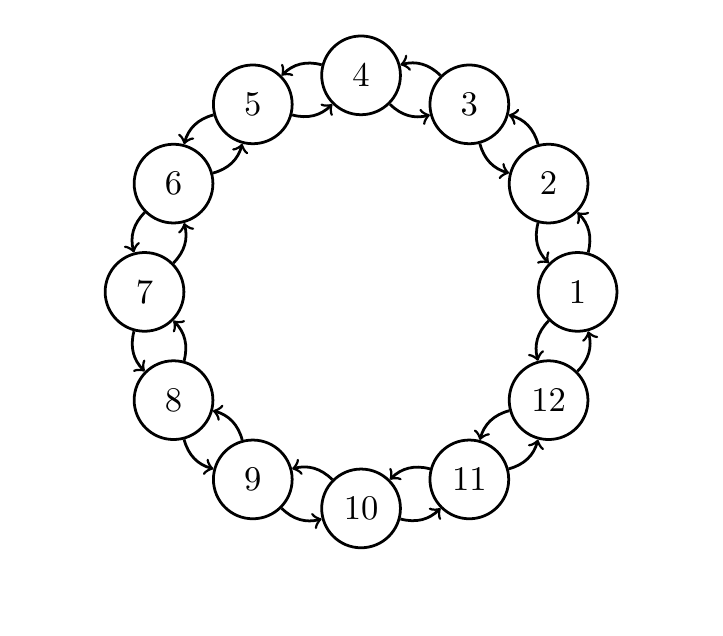}
    \caption{Thick red line: Pareto frontier; Dashed line: anti-Pareto
      frontier; x marker: 
    outcomes of various strategies; light blue: all possible outcomes
    $(C(\eta), R_e(\eta))$ for $\eta\in \Delta$.}
    \label{fig:sym-circle-pareto}
  \end{subfigure}%
  \begin{subfigure}[T]{.5\textwidth}
    \centering
    \includegraphics[page=6]{cordon}
    \caption{Profile of various strategies.}
    \label{fig:sym-circle-strategies}
  \end{subfigure}
  \caption{Performance of the disconnecting vaccination strategy ``one
    in $4$'' for the non-oriented 
    cycle graph with 12 nodes and uniform cost $1/4$.}
  \label{fig:perf}
\end{figure}

\subsubsection{A cordon sanitaire is not the worst vaccination strategy}

Recall that a matrix $K$ is reducible if there exists a permutation $\sigma$ such that
$(K_{\sigma(i)\sigma(j)})_{i,j}$ is block upper triangular, and irreducible otherwise. A
\emph{cordon sanitaire} is a vaccination strategy $\eta$ such that the infection matrix
between non-vaccinated people, $K \cdot \diag(\eta)$, is reducible: informally, such a
vaccination cuts the effective population in two or more groups that do not infect one
another.

Disconnecting the population by creating a cordon sanitaire is not always the ``best''
choice, that is, it may not be Pareto optimal. However, we prove in
Proposition~\ref{prop:cut} that a cordon sanitaire can never be anti-Pareto optimal; this
result still holds in the general kernel framework, provided that the definition of cordon
sanitaires is generalized in an appropriate way.

\begin{example}[Non-oriented cycle graph]\label{ex:cycle-graph}
  Suppose that the matrix $K$ is given by the adjacency matrix (see
  Figure~\ref{fig:kernel-cycle} for a grayplot representation) of the non-oriented cycle
  graph with $N=12$ nodes; see Figure~\ref{fig:cycle-graph}. For a cost $\costu = 1/4$,
  there is a disconnecting strategy $\eta$ that consists in vaccinating one sub-population
  in four; see Figure~\ref{fig:cycle-disc} (and Figure~\ref{fig:kernel-sep} for a grayplot
  representation of the corresponding adjacency matrix). The effective reproduction number
  associated is equal to $\sqrt{2}$. This strategies performs better than the anti-Pareto
  optimal strategy and is out-performed by the Pareto optimal one as we can see in
  Figure~\ref{fig:perf}. This example is discussed in detail in
  \cite[Section~2.4]{ddz-reg}.
\end{example}

\begin{figure}
  \begin{subfigure}[T]{.5\textwidth}
    \centering
    \includegraphics[page=1]{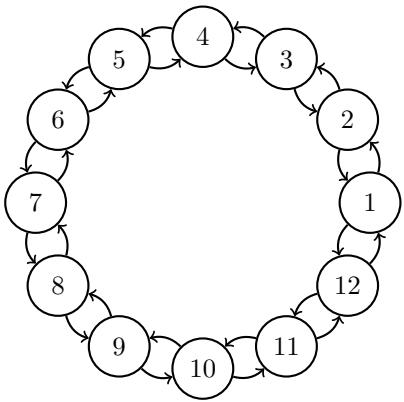}
    \caption{The non-oriented cycle graph.}\label{fig:cycle-graph} 
  \end{subfigure}%
  \begin{subfigure}[T]{.5\textwidth}
    \centering
    \includegraphics[page=3]{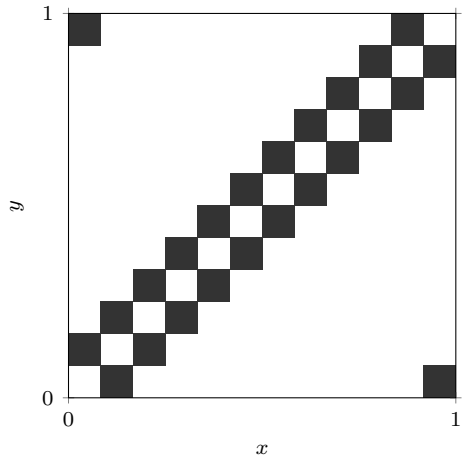}
    \caption{Grayplot of the corresponding kernel.}
    \label{fig:kernel-cycle}
  \end{subfigure}

  \begin{subfigure}[T]{.5\textwidth}
    \centering
    \includegraphics[page=2]{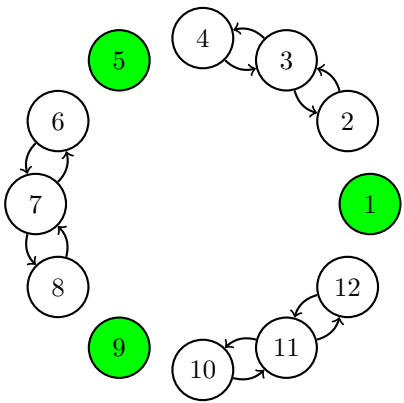}
    \caption{Cordon sanitaire corresponding to the
      ``one in $4$'' vaccination strategy (in green the vaccinated groups).}
    \label{fig:cycle-disc} 
  \end{subfigure}%
  \begin{subfigure}[T]{.5\textwidth}
    \centering
    \includegraphics[page=4]{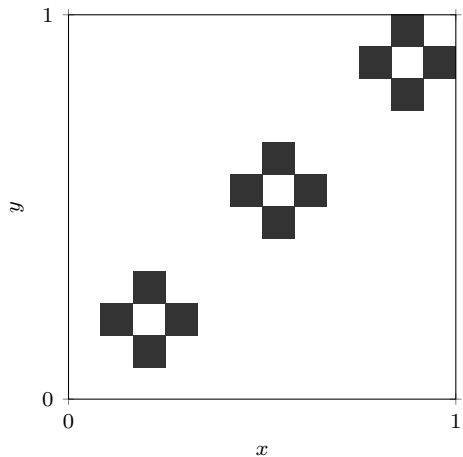}
    \caption{Grayplot of the corresponding kernel.}
    \label{fig:kernel-sep}
  \end{subfigure}
  \caption{Example of disconnecting vaccination strategy on the
    non-oriented cycle graph with $N=12$ nodes.}
  \label{fig:cycle-kern-graph-disc}
\end{figure}

\subsubsection{Minimal cost required
to completely stop the transmission of the disease}

A vaccination strategy $\eta$ such that $R_e(\eta)=0$ completely eradicates the epidemic.
Section~\ref{sec:independance} is devoted to the characterization of the minimal cost of
such vaccinations, which is denoted by~$\cmir$. This quantity is introduced and discussed
in~\cite{ddz-theo} under general assumption for the loss function. Since we consider here
the special case of measuring the loss by the effective reproduction number~$R_e$, we are
able to give in Proposition~\ref{prop:CRe(0)} an explicit expression of this quantity in
the kernel model. In the symmetric matrix case, when the cost is uniform (the cost is
proportional to the number of vaccinated individuals), this expression is proportional to
the size of maximal independent sets of the non-oriented graph with vertices $\{1,\ldots,
N\}$, where there is an edge between $i$ and $j$ if and only if $K_{ij}>0$.

We can observe this property in Figure~\ref{fig:sym-circle-pareto} as the size of the
maximal independent set of the non-oriented cycle graph of size $N$ from
Example~\ref{ex:cycle-graph} is equal to $\floor{N/2}$.

\subsubsection{Reducible case}

When the matrix $K$ happens to be reducible, up to a relabeling, we may assume that it is
block upper triangular. Denoting by $m$ the number of blocks and $I_1, \ldots, I_m$ the
sets of indices describing the blocks, this means that for all $\ell>k$ and $(i,j)\in
I_\ell\times I_k$, we have $K_{ij} = 0$. In the epidemiological interpretation, this means
that the populations with indices in $I_k$ never infect the ones with indices in $I_\ell$.
One may then hope that the study of $R_e$ can be effectively reduced to the study of the
effective radius of the square sub-matrices $(K_{ij})_{i,j\in I_k}$ describing the
infections within block $I_k$. This is indeed the case, and we give in
Section~\ref{sec:reducible} a complete picture of the Pareto and anti-Pareto frontiers of
$R_e$, in terms of the effective reproduction numbers restricted to each irreducible
component of the infection kernel or matrix. In particular, this allows a better
understanding of the possible disconnection of the anti-Pareto frontier, whereas the
Pareto frontier is always connected. Once more, special care has to be taken with the
definitions when handling the infinite dimensional kernel case.

\subsubsection{Optimal ray}

It is observed by Poghotanyan, Feng, Glasser and Hill in
\cite{poghotanyan_constrained_2018}, that if there exists a Pareto optimal strategy $\eta$
with all its entries strictly less than 1, then all the strategies $\lambda \eta$, with
$\lambda\geq 0$ such that $\lambda \eta\in \Delta$, are Pareto optimal. We give a short
proof on the existence of such optimal rays in Section~\ref{sec:ray}, when one assumes
that the cost function $C$ is affine on $\Delta$.

\subsection{Structure of the paper}

We discuss in Section~\ref{sec:ST-TNG} the generality of the setting, showing that
studying vaccination strategies in many different epidemic models gives rise to the same
optimization problem. After recalling formally our infinite dimensional kernel setting in
Section~\ref{sec:settings}, we discuss invariance properties of $R_e$ in
Section~\ref{sec:equivalent}. The convexity properties of $R_e$ and the related conjecture
of Hill and Longini are discussed in Section~\ref{sec:Hill_Longini}. Various properties of
the Pareto and anti-Pareto frontiers, and in particular the fact that establishing a
\emph{cordon sanitaire} by disconnecting the population is never the worst solution, are
discussed in Section~\ref{sec:3-P-et-AP}. Finally, the case of reducible kernels is
treated in Section~\ref{sec:reducible}.

\section{Discussion on the next-generation operator}\label{sec:ST-TNG}

In \cite{delmas_infinite-dimensional_2020, ddz-theo}, we developed a framework that we
call the kernel model where the population is represented as an abstract probability space
$(\Omega, \mathscr{F}, \mu)$. Individuals are characterized by a feature $x \in \Omega$,
and the relative size of the sub-population with feature $x$ is given by $\mu(\mathrm{d}
x)$. The underlying structure described by this feature can be very varied, typical
examples being one or several of the following characteristics: spatial position, social
contacts, susceptibility, infectiousness, characteristics of the immunological response,
\ldots The analogue of the next-generation matrix $K$ is then the kernel operator defined
formally by:
\[
  T_\kk ( g)(x) = \int_\Omega \kk(x,y) \, g(y) \,\mathrm{d}\mu(y);
\]
where the non-negative kernel $\kk$ is defined on $\Omega\times \Omega$ and $\kk(x,y)$
still represents a strength of infection from $y$ to $x$. Vaccination strategies $\eta \,
\colon \, \Omega \to [0,1]$ encode the density of non-vaccinated individuals with respect
to the measure~$\mu$. The (sub-probability) measure $\eta(y)\, \mu(\mathrm{d} y)$ may then
be understood as an effective population, giving rise to an effective next-generation
operator:
\[
  T_{\kk\eta} ( g)(x) = \int_\Omega \kk(x,y) \, g(y) \, \eta(y) \,
  \mu(\mathrm{d} y).
\]
The effective reproduction number is then defined by $R_e(\eta) = \rho(T_{\kk \eta})$,
where $\rho$ stands for the spectral radius of the operator and $\kk \eta$ for the kernel
$(\kk \eta)(x,y)=\kk(x,y) \eta(y)$.

\medskip

Most of the results mentioned in the introduction will be given in this general framework
as we argue that the latter is sufficiently flexible to describe a wide range of epidemic
models from the literature including the metapopulation models. We give in the following a
few examples to support this claim: in each of them, the spectral radius of a particular,
explicit kernel operator appears as a threshold parameter, and the epidemic either
``invades/survives'' or ``dies out'' depending on the value of this parameter. Classical
notations are used: $S$ denotes the proportion of susceptible individuals, $E$ the
proportion of those who have been exposed to the disease, $I$ the proportion of infected
individuals, $R$ the proportion of removed individuals in the population.

\begin{example}[Meta-population models]\label{exple:meta}
  Recall that in metapopulation models, the population is divided into $N \geq 2$
  different sub-populations of respective proportional size $\mu_1, \ldots ,\mu_N$, and
  the reproduction number is given by \( R_e(\eta) = \rho(K \cdot\diag(\eta)) \), where
  $K$ is the next generation matrix and $\eta$ belongs to $[0,1]^N$ and gives the
  proportion of non-vaccinated individuals in each sub-population. To express the function
  $R_e$ as the effective reproduction number of a kernel model, consider the discrete
  state space $\Omega_{\mathrm{d}} = \{ 1, \ldots, N \}$ equipped with the probability
  measure $\mu_{\mathrm{d}}$ defined by $\mu_{\mathrm{d}}(\{i\}) = \mu_i$, and let
  $\kk_{\mathrm{d}}$ denote the discrete kernel on $\Omega_{\mathrm{d}}$ defined by:
  \begin{equation}\label{eq:next-kernel} \kk_{\mathrm{d}}(i,j) = K_{ij}/\mu_j.
  \end{equation}
  For all $\eta \in \Delta = [0,1]^N$, the matrix $K\cdot \diag(\eta)$ is the matrix
  representation of the endomorphism $T_{\kk_{\mathrm{d}} \eta}$ in the canonical basis of
  $\R^N$. In particular, we have: $R_e(\eta) = \rho(T_{\kk \eta}) = \rho(K
  \cdot\diag(\eta))$.

  In Figure~\ref{fig:kernel-cycle}, we have plotted the kernel on $[0, 1]$ associated to
  $\kk_{\mathrm{d}}$ for the non-oriented cycle graph when the sub-populations have the
  same size.
\end{example}

\begin{example}[An SIR model with nonlinear incidence rate and vital dynamics]
  In \cite{thieme_global_2011}, Thieme proposed an SIR model in an infinite-dimensional
  population structure with a nonlinear incidence rate. The structure space is given by
  $\Omega$ a compact subset of $\R^N$ equipped with the normalized Lebesgue measure. We
  restrict slightly his assumption so that the incidence rate is a linear function of the
  number of susceptible. The dynamic of the epidemic then writes:
  \begin{equation}
    \text{For $ t \geq 0$, $x \in \Omega$,}\qquad \left\{
      \begin{array}{ll}
	\partial_t S(t,x) = \Lambda(x) - \nu(x) S(t,x) - S(t,x)\, \int_\Omega f(I(t,y), x,
	y) \, \mathrm{d}y, \\
	\\
	\partial_t I(t,x) = S(t,x)\, \int_\Omega f(I(t,y), x, y) \, \mathrm{d}y - (\gamma(x)
	+ \nu(x))I(t,x), \\
	\\
	\partial_t R(t,x) = \gamma(x) I(t,x).
      \end{array}
    \right.
  \end{equation}
  Here $\Lambda(x)$ is the rate at which fresh susceptible individuals are recruited into the
  population at location $x$, $\nu(x)$ is the \emph{per capita} death rate of the
  individuals, and $\gamma(x)$ is the \emph{per capita} recovery rate of infectious
  individuals The integral term describes the incidence at $x$ at time $t$, \emph{i.e.},
  the rate of new infections. Thieme identified a threshold parameter that plays the role
  of the reproduction number, and is given by the spectral radius of the operator $T_\kk$
  with the kernel given by:
  \begin{equation}
    \kk(x,y) = \frac{\Lambda(x)}{\gamma(x) + \nu(x)} \partial_I f(0, x,y), \quad x,y \in
    \Omega,
  \end{equation}
  where $\partial_I f(0, x,y)$, the derivative of $f$ with respect to $I$, is supposed to
  be non-negative.

  Suppose that individuals at location $x$ are vaccinated with
  probability $1-\eta(x)$ at birth so that the susceptible individuals
  with feature $x$ are recruited at rate $\eta(x) \Lambda(x)$ and
  recovered/immunized individuals are also recruited at rate
  $(1-\eta(x)) \Lambda(x)$ at location $x$. The threshold parameter
  $R_e(\eta)$ is then given by the spectral radius of the integral
  operator $T_{\eta \kk}$ with kernel $\eta\kk $ given by
  $(\eta\kk)(x,y)=\eta(x) \kk(x,y)$. According to
  Lemma~\ref{lem:prop-spec-mult}~\ref{item:spec-adjoint-mult}, we have
  $\rho(T_{\eta \kk}) = \rho(T_{\kk \eta})$, and our framework can be
  used for this model.

  Under regularity assumptions on the parameters of the model, Thieme proved that if
  $R_e(\eta)$ is greater than $1$, then there exists an endemic equilibrium that attracts
  all the solutions while if $R_e(\eta)$ is smaller than $1$, then $I(t,x)$ converges to $0$
  for all $x \in \Omega$ as $t$ goes to infinity.
\end{example}

\begin{example}[An SEIR model without vital dynamics]
  In \cite{FinalSizeAndAlmeid2021}, Almeida, Bliman, Nadin and Perthame studied an
  heterogeneous SEIR model where the population is again structured with a bounded subset
  $\Omega \subset \R^N$ equipped with the normalized Lebesgue measure. The dynamic of the
  susceptible, exposed, infected and recovered individuals writes:
  \begin{equation}
    \text{For $t \geq 0$, $x \in \Omega$,}\qquad \left\{
      \begin{array}{ll}
	\partial_t S(t,x) = - S(t,x) \,\int_\Omega k(x,y) I(t,y)\, \mathrm{d}y, \\
	\\
	\partial_t E(t,x) = S(t,x) \, \int_\Omega k(x,y) I(t,y)\, \mathrm{d}y - \alpha(x) E(t,x), \\
	\\
	\partial_t I(t,x) = \alpha(x) E(t,x) - \gamma(x) I(t,x), \\
	\\
	\partial_t R(t,x) = \gamma(x) I(t,x).
      \end{array}
    \right.
  \end{equation}
  Here, the average incubation rate is denoted by $\alpha(x)$ and the average recovery
  rate by $\gamma(x)$; both quantities may depend upon the trait $x$. The function $k$
  is the transmission kernel of the disease. In this model, the basic reproduction number
  is given by the spectral radius of the integral operator $T_\kk$ with kernel
  $\kk=k/\gamma$:
  \begin{equation}\label{eq:SEIR} \kk(x,y) = k(x,y)/\gamma(y).
  \end{equation}
  Suppose that, prior to the beginning of the epidemic, the decision maker immunizes a
  density $1 - \eta$ of individuals. According to
  \cite[Section~3.2]{FinalSizeAndAlmeid2021}, the effective reproduction number is given
  by $\rho(T_{\eta \kk})$ which is also equal to $\rho(T_{\kk \eta})$, see
  Lemma~\ref{lem:prop-spec-mult}~\ref{item:spec-adjoint-mult} below, and our model is
  indeed suitable for studying the vaccination strategies in this context.
\end{example}

\begin{example}[An SIS model without vital dynamic]\label{exple:sis}
  In \cite{delmas_infinite-dimensional_2020}, generalizing the discrete model of
  Lajmanovich and Yorke~\cite{lajmanovich1976deterministic}, we introduced the following
  heterogeneous SIS model where the population is structured with an abstract probability
  space~$(\Omega, \mathscr{F}, \mu)$:
  \begin{equation}
    \text{For $t \geq 0$, $x \in \Omega$,}\qquad \left\{
      \begin{array}{ll}
	\partial_t S(t,x) = - S(t,x) \,\int_\Omega k(x,y) I(t,y)\, \mathrm{d}y + \gamma(x)
	I(t,x), \\
	\\
	\partial_t I(t,x) = S(t,x) \,\int_\Omega k(x,y) I(t,y)\, \mathrm{d}y - \gamma(x)
	I(t,x).
      \end{array}
    \right.
  \end{equation}
  The function $\gamma$ is the \emph{per-capita} recovery rate and $k$ is the transmission
  kernel. For this model, $R_e(\eta) = \rho(T_{\kk \eta})$ where $\kk=k/\gamma $ is
  defined by~\eqref{eq:SEIR}.

  Suppose that, prior to the beginning of the epidemic, a density $1 - \eta$ of
  individuals is vaccinated with a perfect vaccine. In the same way as for the SEIR model,
  we proved, as $t$ goes to infinity, that if $R_e(\eta)$ is smaller than or equal to $1$,
  then $I(t,\cdot)$ converges to $0$, and, under a connectivity assumption on the kernel
  $k$, that if $R_e(\eta)$ is greater than $1$, then $I(t,\cdot)$ converges to an endemic
  equilibrium. This highlights the importance of $R_e$ in the design of vaccination
  strategies.
\end{example}

\section{Setting, notations and previous results}\label{sec:settings}

\subsection{Spaces, operators, spectra}\label{sec:spaces}

All metric spaces~$(S,d)$ are endowed with their Borel~$\sigma$-field denoted by $\cb(S)$.
The set $\ck$ of compact subsets of~$\C$ endowed with the Hausdorff distance
$d_\mathrm{H}$ is a metric space, and the function~$\mathrm{rad}$ from~$\ck$ to $\R_+$
defined by~$\mathrm{rad}(K)=\max\{|\lambda|\, ,\, \lambda\in K\}$ is Lipschitz continuous
from~$(\ck,d_\mathrm{H})$ to~$\R$ endowed with its usual Euclidean distance.

Let~$(\Omega, \cf, \mu)$ be a probability space. We denote by~$\Delta$ the set of $[0,
1]$-valued measurable functions defined on~$\Omega$. For~$f$ and~$g$ real-valued functions
defined on~$\Omega$, we may write~$\langle f, g \rangle$ or $\int_\Omega f g \, \mathrm{d}
\mu$ for $\int_\Omega f(x) g(x) \,\mu( \mathrm{d} x)$ whenever the latter is meaningful.
For~$p \in [1, +\infty]$, we denote by $L^p=L^p( \mu)=L^p(\Omega, \mu)$ the space of
real-valued measurable functions~$g$ defined~$\Omega$ such that $\norm{g}_p=\left(\int
|g|^p \, \mathrm{d} \mu\right)^{1/p}$ (with the convention that~$\norm{g}_\infty$ is
the~$\mu$-essential supremum of $|g|$) is finite, where functions which agree~$\mu$-a.s.\
are identified. We denote by~$L^p_+$ the subset of~$L^p$ of non-negative functions.

\medskip

Let~$(E, \norm{\cdot})$ be a Banach space. We denote by~$\norm{\cdot}_E$ the operator norm
on~$\cll(E)$ the Banach algebra of bounded operators. The spectrum~$\spec(T)$ of~$T\in
\cll(E)$ is the set of~$\lambda\in \C$ such that~$ T - \lambda \mathrm{Id}$ does not have
a bounded inverse operator, where~$\mathrm{Id}$ is the identity operator on~$E$. Recall
that~$\spec(T)$ is a compact subset of~$\C$, and that the spectral radius of~$T$ is given
by:
\begin{equation}\label{eq:def-rho}
  \rho(T)=\mathrm{rad}(\spec(T))=
  \lim_{n\rightarrow \infty } \norm{T^n}_E^{1/n}.
\end{equation}
The element $\lambda\in \spec(T)$ is an eigenvalue if there exists $x\in E$ such that
$Tx=\lambda x$ and $x\neq 0$. Following \cite{konig}, we define the (algebraic)
multiplicity of $\lambda\in \C$ by:
\[
  \mult(\lambda, T)= \dim \left( \bigcup_{k\in \N^*} \ker (T- \lambda
  \mathrm{Id})^k\right),
\]
so that $\lambda$ is an eigenvalue if $\mult(\lambda, T)\geq 1$. We say the eigenvalue
$\lambda$ of $T$ is \emph{simple} if $\mult(\lambda, T)=1$. 

If~$E$ is also an algebra, for~$g \in E$, we denote by~$M_g$ the multiplication (possibly
unbounded) operator defined by~$M_g(h)=gh$ for all~$h \in E$.

\subsection{Invariance and continuity of the spectrum for compact operators}

We collect some known results on the spectrum and multiplicity of eigenvalues related to
compact operators. Let~$(E, \norm{\cdot})$ be a Banach space. Let~$A\in \cll(E)$. We
denote by $A^\top$ the adjoint of $A$. A sequence~$(A_n,n \in \N)$ of elements
of~$\cll(E)$ converges strongly to~$A \in \cll(E)$ if~$\lim_{n\rightarrow \infty }
\norm{A_nx -Ax}=0$ for all~$x\in E$. Following~\cite{anselone}, a set of
operators~$\ca \subset \cll(E)$ is \emph{collectively compact} if the set~$\{ A x \,
\colon \, A \in \ca, \, \norm{x}\leq 1 \}$ is relatively compact. Recall that the spectrum
of a compact operator is finite or countable and has at most one accumulation point, which
is $0$. Furthermore, $0$ belongs to the spectrum of compact operators in infinite
dimension. We refer to \cite{schaefer_banach_1974} for an introduction to Banach lattices
and positive operators; we shall only consider the Banach lattices 
$L^p(\Omega, \mu)$ for $p\geq 1$ on a probability space $(\Omega,
\cf, \mu)$ and a bounded operator $A$ is positive if $A(L^p_+)\subset L^p_+$. 

\begin{lemma}\label{lem:prop-spec-mult}
  Let $A, B$ be elements of $\cll(E)$. 
  \begin{propenum}
  \item\label{item:A-B} If $E$ is a Banach lattice, and if~$A$,~$B$ and~$A-B$ are positive
    operators, then we have:
    \begin{equation}\label{eq:r(A)r(B)} \rho(A)\geq \rho(B). 
    \end{equation}
  \item\label{item:spec-adjoint-mult} If $A$ is compact, then we have $AB$ and $BA$
    compact and:
    \begin{align}\label{eq:adjoint-mult}
      \spec(A)=\spec(A^\top) &\quad\text{and}\quad
      \mult(\lambda,A)=\mult(\lambda, A^\top) \quad\text{for $\lambda\in
      \C^*$},\\
      \label{eq:r(AB)-mult}
      \spec(AB)=\spec(BA & \quad\text{and}\quad
      \mult(\lambda, AB)=\mult(\lambda, BA)\quad\text{for $\lambda\in \C^*$},
    \end{align}
    and in particular:
    \begin{equation}\label{eq:r(AB)}
      \rho(AB)=\rho(BA). 
    \end{equation}
  \item \label{item:density-mult} Let~$(E', \norm{\cdot}')$ be a Banach space such
    that~$E'$ is continuously and densely embedded in~$E$. Assume that $A(E')\subset E'$,
    and denote by~$A'$ the restriction of $A$ to~$E'$ seen as an operator on~$E'$. If $A$
    and $A'$ are compact, then we have:
    \begin{equation}\label{eq:sT=sT'}
      \spec(A)=\spec(A')
      \quad\text{and}\quad
      \, \mult(\lambda, A)=\mult(\lambda, A')\quad\text{for $\lambda\in
      \C^*$}.
    \end{equation}
  \item \label{item:collectK-cv} Let~$(A_n, n\in \N)$ be a collectively compact sequence
    which converges strongly to~$A$. Then, we have $\lim_{n\rightarrow \infty }
    \spec(A_n)=\spec(A)$ in~$(\ck, d_\mathrm{H})$, $\lim_{n\rightarrow }
    \rho(T_n)=\rho(T)$ and for $\lambda\in
    \spec(A)\cap \C^*$, $r>0$ such that $\lambda'\in \spec(A)$ and
    $|\lambda-\lambda'|\leq r$ implies $\lambda=\lambda'$, and
    all $n$ large enough:
    \begin{equation}
      \label{eq:collectK-mult}
      \mult(\lambda, A)= \sum_{\lambda'\in
      \spec(A_n),\, |\lambda-\lambda'|\leq r} \mult(\lambda', A_n).
    \end{equation}
  \end{propenum}
\end{lemma}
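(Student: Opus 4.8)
The plan is to treat the four items in order, reusing the Riesz spectral projection for the last two; all four are essentially classical, so I would keep the arguments to sketches and lean on \cite{schaefer_banach_1974}, \cite{konig} and \cite{anselone}. For~\ref{item:A-B} I would pass to powers. From $0\le B\le A$ with $A,B$ positive (hence order-preserving) an induction gives $0\le B^n\le A^n$: applying $B$ and then $A$ to $Bx\le Ax$ for $x\ge 0$ yields $B^2x\le BAx\le A^2x$, and one iterates. On a Banach lattice the norm is monotone on the positive cone and $\norm{T}=\sup_{x\ge 0,\,\norm{x}\le 1}\norm{Tx}$ for positive $T$ (because $\abs{Tx}\le T\abs{x}$); hence $0\le B^n\le A^n$ gives $\norm{B^n}\le\norm{A^n}$, and Gelfand's formula \eqref{eq:def-rho} gives $\rho(B)=\lim_n\norm{B^n}^{1/n}\le\lim_n\norm{A^n}^{1/n}=\rho(A)$.

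For~\ref{item:spec-adjoint-mult}, compactness of $AB$ and $BA$ is the standard fact that a compact operator composed with a bounded one is compact. The identity $\spec(A)=\spec(A^\top)$ follows from $(\lambda\mathrm{Id}-A)^\top=\lambda\mathrm{Id}-A^\top$ together with the fact that an operator is invertible iff its adjoint is, and the equality of multiplicities on $\C^*$ is Riesz--Schauder theory, for which I would cite \cite{konig}. For $\spec(AB)$ versus $\spec(BA)$ on $\C^*$ I would write the resolvent explicitly: if $\lambda\ne 0$ and $C=(\lambda\mathrm{Id}-AB)^{-1}$, then $\lambda^{-1}(\mathrm{Id}+BCA)$ inverts $\lambda\mathrm{Id}-BA$, using $ABC=\lambda C-\mathrm{Id}$. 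For multiplicities, the intertwining $(BA-\lambda\mathrm{Id})^k B=B(AB-\lambda\mathrm{Id})^k$ shows that $B$ carries $\ker(AB-\lambda\mathrm{Id})^k$ into $\ker(BA-\lambda\mathrm{Id})^k$ (and $A$ symmetrically); since $\lambda\ne 0$ these maps are mutually inverse up to the factor $\lambda$ on the finite-dimensional generalized eigenspaces, giving $\mult(\lambda,AB)=\mult(\lambda,BA)$. As the two spectra agree away from the origin and the radius is the largest modulus of a spectral point, $\rho(AB)=\rho(BA)$ follows.

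Item~\ref{item:density-mult} is the delicate one, and the obstacle is precisely that an eigenvector of $A$ in $E$ need not a priori lie in $E'$, so I would avoid eigenvectors and argue with projections. One inclusion is immediate: if $\zeta\in\C^*\setminus\spec(A)$ and $x\in E'$ satisfies $(\zeta\mathrm{Id}-A')x=0$, then $(\zeta\mathrm{Id}-A)x=0$ in $E$, so $x=0$; as $A'$ is compact this makes $\zeta\mathrm{Id}-A'$ bijective, whence $\spec(A')\cap\C^*\subseteq\spec(A)\cap\C^*$, and the two resolvents agree on $E'$ wherever both exist. For the reverse inclusion, fix $\lambda\in\spec(A)\cap\C^*$ and a small circle $\Gamma\subset\C^*$ around $\lambda$ meeting no other point of $\spec(A)$; on $\Gamma$ both resolvents exist and agree on $E'$, so the projections $P=\frac{1}{2\pi i}\oint_\Gamma(\zeta\mathrm{Id}-A)^{-1}\,\rd\zeta$ and $P'=\frac{1}{2\pi i}\oint_\Gamma(\zeta\mathrm{Id}-A')^{-1}\,\rd\zeta$ satisfy $P'=P|_{E'}$. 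If $\lambda\notin\spec(A')$ then $(\zeta\mathrm{Id}-A')^{-1}$ is holomorphic across $\lambda$, so $P'=0$, meaning $P$ vanishes on the dense subspace $E'$ and hence $P=0$; but $P$ projects onto the nonzero generalized eigenspace of $A$ at $\lambda$, a contradiction. For multiplicities, $P'=P|_{E'}$ gives $\operatorname{Range}(P')=P(E')\subseteq P(E)=\operatorname{Range}(P)$; since $P$ is continuous, $E'$ dense and $\operatorname{Range}(P)$ finite-dimensional (hence closed), $P(E')$ is dense in and therefore equal to $P(E)$, so $\mult(\lambda,A')=\mult(\lambda,A)$.

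Finally, for~\ref{item:collectK-cv} I would invoke the collectively compact approximation theory of \cite{anselone}. Strong convergence plus the uniform boundedness principle bound $\sup_n\norm{A_n}_E$, and collective compactness yields the two key estimates $\norm{(A_n-A)A}_E\to 0$ and $\norm{(A_n-A)A_n}_E\to 0$, from which $A$ is compact and, for $\zeta$ off $\spec(A)$, $\zeta\mathrm{Id}-A_n$ is eventually invertible with $(\zeta\mathrm{Id}-A_n)^{-1}\to(\zeta\mathrm{Id}-A)^{-1}$ uniformly on compact subsets of the resolvent set. Integrating this convergence over a contour $\Gamma$ around an isolated $\lambda\in\spec(A)\cap\C^*$ gives $P_n\to P$, and the ranges being finite-dimensional, $\dim\operatorname{Range}(P_n)\to\dim\operatorname{Range}(P)$ for $n$ large, which is exactly \eqref{eq:collectK-mult}. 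The same resolvent convergence together with the uniform spectral bound prevents escape of spectrum and yields the Hausdorff convergence $\spec(A_n)\to\spec(A)$ in $(\ck,d_\mathrm{H})$, whence Lipschitz continuity of $\mathrm{rad}$ gives $\rho(A_n)\to\rho(A)$. I expect the main effort here to be in quoting this machinery accurately rather than in any new idea.
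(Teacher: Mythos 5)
Your proposal is correct. Items \ref{item:A-B}, \ref{item:spec-adjoint-mult} and \ref{item:collectK-cv} follow the same lines as the paper, which simply cites Marek, K\"onig and Anselone respectively for these classical facts; you fill in the standard proofs (the $0\le B^n\le A^n$ plus Gelfand argument, the explicit resolvent formula $\lambda^{-1}(\mathrm{Id}+BCA)$ and the intertwining $(BA-\lambda\mathrm{Id})^kB=B(AB-\lambda\mathrm{Id})^k$), which is fine --- your phrase ``mutually inverse up to the factor $\lambda$'' is slightly loose, since $AB$ restricted to the generalized eigenspace is $\lambda\mathrm{Id}$ plus a nilpotent rather than $\lambda\mathrm{Id}$, but injectivity of $B$ there is all you need and it does hold. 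The genuine divergence is in item \ref{item:density-mult}: the paper cites Halberg for $\spec(A)=\spec(A')$ and then proves the multiplicity statement bare-handed, by taking $x\in\ker(A-\lambda\mathrm{Id})^n$, approximating it by a sequence in $E'$, using compactness of $A'$ to extract a limit $y\in E'$, and solving the binomial identity $\lambda^nx=\sum_k\binom{n}{k}(-\lambda)^{n-k+1}(A')^{k-1}y$ to conclude that $x$ itself lies in $E'$, so the two generalized kernels coincide. You instead work entirely with Riesz projections: the resolvents agree on $E'$, so $P'=P|_{E'}$, density of $E'$ forces $P\ne 0\Rightarrow P'\ne0$ (giving the spectral inclusion the paper outsources to Halberg) and $P(E')=P(E)$ by finite-dimensionality of the range (giving equal multiplicities). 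Your route is self-contained and arguably cleaner, treating spectrum and multiplicity in one stroke; the paper's argument yields the slightly stronger and more concrete fact that $\ker(A-\lambda\mathrm{Id})^n\subset E'$. The only loose end in your version is the point $0$, which \eqref{eq:sT=sT'} includes in the spectral equality: this is immediate since $E'$ dense in $E$ forces both spaces to be simultaneously finite- or infinite-dimensional, and in the latter case $0$ lies in both spectra by compactness, but you should say so.
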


\begin{proof}
  Property \ref{item:A-B} can be found in \cite[Theorem~4.2]{marek}. Equation
  \eqref{eq:adjoint-mult} from Property \ref{item:spec-adjoint-mult} can be deduced from 
  from \cite[Theorem~p.~20]{konig}. Using \cite[Proposition~p.~25]{konig}, we get the
  second part of \eqref{eq:r(AB)-mult} and $ \spec(AB) \cap \C^*=\spec(BA) \cap \C^*$, and
  thus \eqref{eq:r(AB)} holds. To get the first part of \eqref{eq:r(AB)-mult}, see
  \cite[Lemma~3.2]{ddz-theo}. 

  \medskip

  We now provide a short proof for Property \ref{item:density-mult}. According to
  \cite[Corollary~1 and Section~6]{halber}, we have $\spec(A)=\spec(A')$. Let $\lambda\in
  \spec(A) \cap \C^*$. Since the multiplicity of $\lambda$ for $A$ is finite, we get that
  $\mult(\lambda, A)=\dim \left(\ker (A -\lambda \mathrm{Id})^n\right)$ for $n$ large
  enough, and similarly for $\mult(\lambda, A')$. Clearly, we have $\ker (A' -\lambda
  \mathrm{Id})^n\subset \ker (A -\lambda \mathrm{Id})^n$. Let us prove that $\ker (A
  -\lambda \mathrm{Id})^n\subset \ker (A' -\lambda \mathrm{Id})^n$. Let $x\in \ker (A
  -\lambda \mathrm{Id})^n$ and $(x_\ell, \ell\in \N)$ be a sequence of elements of $E'$
  which converges (in $E$) towards $x$. Up to taking a sub-sequence, since $A'$ is
  compact, we can assume that $A'x_\ell$ converges in $E'$, say towards $y\in E'$. We
  deduce that:
  \begin{align*}
    \lambda^n x
    &=\sum_{k=1}^n \binom{n}{k} (-\lambda)^{n-k+1} A^k x\\
    &= \lim_{\ell\rightarrow \infty } \sum_{k=1}^n \binom{n}{k} (-\lambda)^{n-k+1} A^k x_\ell\\
    &= \lim_{\ell\rightarrow \infty } \sum_{k=1}^n \binom{n}{k} (-\lambda)^{n-k+1} (A')^{k-1} (A' x_\ell)\\
    &= \sum_{k=1}^n \binom{n}{k} (-\lambda)^{n-k+1} (A')^{k-1} y.
  \end{align*}
  Since $\lambda \neq 0$, we get that $x$ belongs to $E'$ and thus $(A' -\lambda
  \mathrm{Id})^n x= (A -\lambda
  \mathrm{Id})^nx=0$, that is
  $\ker (A -\lambda \mathrm{Id})^n\subset \ker (A' -\lambda
  \mathrm{Id})^n$. Then use the definition of the multiplicity to
  conclude.

  \medskip

  We eventually check Point \ref{item:collectK-cv}. We deduce from \cite[Theorems~4.8 and
  4.16]{anselone} (see also (d), (g) [take care that $d(\lambda, K)$ therein is the
  algebraic multiplicity of $\lambda$ for the compact operator $K$ and not the geometric
  multiplicity] and (e) in~\cite[Section~3]{SpectralProperAnselo1974})
  that~$\lim_{n\rightarrow \infty } \spec(T_n)=\spec(T)$ and \eqref{eq:collectK-mult}.
  Then use that the function~$\mathrm{rad}$ is continuous to deduce the convergence of the
  spectral radius from the convergence of the spectra.
\end{proof}

\subsection{Kernel operators}

We define a \emph{kernel} (resp. \emph{signed kernel}) on~$\Omega$ as a $\R_+$-valued
(resp. $\R$-valued) measurable function defined on~$(\Omega^2, \mathscr{F}^{\otimes 2})$.
For~$f,g$ two non-negative measurable functions defined on~$\Omega$ and~$\kk$ a kernel
on~$\Omega$, we denote by $f\kk g$ the kernel defined by:
\begin{equation}\label{eq:def-fkg}
  f\kk g:(x,y)\mapsto f(x)\, \kk(x,y) g(y).
\end{equation}

For~$p \in (1, +\infty )$, we define the double norm of a signed kernel~$\kk$ on $L^p$ by: 
\begin{equation}\label{eq:Lp-integ-cond}
  \norm{\kk}_{p,q}=\left(\int_\Omega\left( \int_\Omega \abs{\kk(x,y)}^q\,
  \mu(\mathrm{d}y)\right)^{p/q} \mu(\mathrm{d}x) \right)^{1/p}
  \quad\text{with~$q$ given by}\quad \frac{1}{p}+\frac{1}{q}=1.
\end{equation}
We say that $\kk$ has a finite double norm, if there exists~$p \in (1, +\infty )$ such
that $\norm{\kk}_{p,q}<+\infty$. To such a kernel $\kk$, we then associate the positive
integral operator~$T_\kk$ on~$L^p$ defined by:
\begin{equation}\label{eq:def-Tkk}
  T_\kk (g) (x) = \int_\Omega \kk (x,y)\, g(y)\,\mu(\mathrm{d}y)
  \quad \text{for } g\in L^p \text{ and } x\in \Omega.
\end{equation}
According to~\cite[p. 293]{grobler}, $T_\kk$ is compact. It is well known and easy to
check that:
\begin{equation}\label{eq:double-norm-norm}
  \norm{ T_\kk }_{L^p}\leq \norm{\kk}_{p,q}.
\end{equation}
We define the \emph{reproduction number} associated to the operator $T_\kk$ as:
\begin{equation}\label{eq:def-R0}
  R_0[\kk]=\rho(T_\kk).
\end{equation}

\medskip

The proof of the next stability result appears already in \cite{ddz-theo} (but for
\eqref{eq:cv-mult-vp} whose proof relies on \eqref{eq:collectK-mult} and is left to the
reader).

\begin{corollary}\label{cor:cv-kn}
  Let~$p\in (1, +\infty )$. Let~$(\kk_n, n\in \N)$ and~$\kk$ be kernels on~$\Omega$ with
  finite double norms on~$L^p$ such that~$\lim_{n\rightarrow\infty } \norm{\kk_n -\kk}_{p,q}=0$.
 Then, we have $\lim_{n\rightarrow \infty }
  \spec(T_{\kk_n})=\spec(T_\kk)$ in~$(\ck, d_\mathrm{H})$, $\lim_{n\rightarrow }
  \rho(T_{\kk_n})=\rho(T_\kk)$ and for $\lambda\in
    \spec(T_\kk)\cap \C^*$, $r>0$ such that $\lambda'\in \spec(T_\kk)$ and
    $|\lambda-\lambda'|\leq r$ implies $\lambda=\lambda'$, and
    all $n$ large enough:
\begin{equation}
   \label{eq:cv-mult-vp}
      \mult(\lambda, T_\kk)= \sum_{\lambda'\in
        \spec(T_{\kk_n}),\, |\lambda-\lambda'|\leq r} \mult(\lambda', T_{\kk_n}).
\end{equation}
\end{corollary}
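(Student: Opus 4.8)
The plan is to reduce Corollary~\ref{cor:cv-kn} to Lemma~\ref{lem:prop-spec-mult}~\ref{item:collectK-cv}, since the latter already delivers the convergence of spectra, of spectral radii, and the multiplicity identity~\eqref{eq:collectK-mult} for any collectively compact sequence converging strongly to a compact operator. Thus the entire task amounts to verifying two things: that $(T_{\kk_n}, n\in\N)$ converges strongly to $T_\kk$ in $\cll(L^p)$, and that the family $\{T_{\kk_n} : n \in \N\}$ is collectively compact. Once these hypotheses are checked, the three conclusions (convergence of $\spec(T_{\kk_n})$ in $(\ck, d_\mathrm{H})$, convergence of $\rho(T_{\kk_n})$, and~\eqref{eq:cv-mult-vp}) are immediate instances of the corresponding conclusions of Lemma~\ref{lem:prop-spec-mult}~\ref{item:collectK-cv}, with $A_n = T_{\kk_n}$, $A = T_\kk$, and the multiplicity statement~\eqref{eq:collectK-mult} specializing to~\eqref{eq:cv-mult-vp}.

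\emph{Strong convergence} is the easy half. For fixed $g \in L^p$ we have $T_{\kk_n}(g) - T_\kk(g) = T_{\kk_n - \kk}(g)$ by linearity of the kernel-to-operator map, so the operator-norm bound~\eqref{eq:double-norm-norm} gives $\norm{T_{\kk_n}(g) - T_\kk(g)}_p = \norm{T_{\kk_n - \kk}(g)}_p \leq \norm{T_{\kk_n - \kk}}_{L^p}\,\norm{g}_p \leq \norm{\kk_n - \kk}_{p,q}\,\norm{g}_p$. Since $\norm{\kk_n - \kk}_{p,q}\to 0$ by hypothesis, this tends to $0$ for every $g$; in fact we obtain operator-norm convergence, which is stronger than strong convergence and which incidentally already yields the convergence of the spectra via the Lipschitz continuity of $\mathrm{rad}$, although not the finer multiplicity statement.

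\emph{Collective compactness} is the step I expect to be the main obstacle, and it cannot be deduced from norm convergence alone in general — although here norm convergence will in fact save us. The cleanest route is to show that $\{T_{\kk_n} : n\in\N\}$ is contained in a norm-compact subset of $\cll(L^p)$: indeed, by~\eqref{eq:double-norm-norm} we have $\norm{T_{\kk_n} - T_\kk}_{L^p} \leq \norm{\kk_n - \kk}_{p,q} \to 0$, so the set $\{T_{\kk_n}\} \cup \{T_\kk\}$ is a convergent sequence together with its limit and hence is norm-compact in $\cll(L^p)$. A norm-bounded-by-compact argument then applies: writing $B$ for the closed unit ball of $L^p$, one estimates for any $x \in B$ that $\dist\big(T_{\kk_n} x,\, T_\kk(B)\big) \leq \norm{(T_{\kk_n} - T_\kk)x} \leq \norm{\kk_n - \kk}_{p,q}$, so every $T_{\kk_n} x$ lies within a vanishing distance of the relatively compact set $T_\kk(B)$ (each $T_\kk$ being compact by~\cite[p.~293]{grobler}). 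A standard total-boundedness argument then shows $\{A x : n\in\N,\ x\in B\}$ is relatively compact, which is precisely the definition of collective compactness from~\cite{anselone}.

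With both hypotheses in hand, I would simply invoke Lemma~\ref{lem:prop-spec-mult}~\ref{item:collectK-cv} and read off the three conclusions. The only remaining point of care is bookkeeping: one must note that each $T_{\kk_n}$ and $T_\kk$ is genuinely compact (so that~\ref{item:collectK-cv} applies and so that the multiplicities appearing in~\eqref{eq:cv-mult-vp} are finite), and that the nonzero $\lambda$ and the radius $r$ isolating it transfer verbatim from the statement of the Corollary to that of the Lemma. The proof of~\eqref{eq:cv-mult-vp} itself is then nothing but~\eqref{eq:collectK-mult} rewritten with $T_{\kk_n}, T_\kk$ in place of $A_n, A$, which is why the authors remark that it is left to the reader.
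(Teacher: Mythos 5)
Your proof is correct and matches the paper's intended route: the paper simply cites \cite{ddz-theo} for the spectrum and spectral-radius convergence and notes that \eqref{eq:cv-mult-vp} follows from \eqref{eq:collectK-mult}, i.e.\ from Lemma~\ref{lem:prop-spec-mult}~\ref{item:collectK-cv}, which is exactly the reduction you carry out by verifying strong (indeed norm) convergence and collective compactness via the double-norm bound \eqref{eq:double-norm-norm}. The only nit is that in your total-boundedness step the distance to $T_\kk(B)$ is not small for the finitely many initial indices, so you must also invoke the compactness of each individual $T_{\kk_n}$ there (and your parenthetical that norm convergence ``already yields convergence of the spectra via the Lipschitz continuity of $\mathrm{rad}$'' conflates the two directions), but neither point affects the argument.
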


\subsection{Irreducibility, quasi-irreducibility and monatomic kernel}

We first define \emph{irreducible} and \emph{monatomic} kernels. For $A, B\in \cf$, we
write $A\subset B$ a.s.\ if $\mu(B^c \cap A)=0$ and $A=B$ a.s.\ if $A\subset B$ a.s.\ and
$B\subset A$ a.s. For $A, B\in \cf$, $x\in \Omega$ and an integrable kernel $\kk$, we simply
write $\kk(x,A)=\int_{ A} \kk(x,y)\, \mu(\rd y)$, $ \kk(B,x)=\int_{ B} \kk(z,x)\, \mu(\rd
z)$ and:
\[
  \kk(B, A)=
  \int_{B \times A} \kk(z,y)\, \mu(\rd z) \mu(\rd y).
\]
A set $A\subset \cf$ is \emph{$\kk$-invariant}, or simply \emph{invariant} when there is
no ambiguity on the kernel $\kk$, if $\kk(A^c, A)=0$. In the epidemiological setting, the
set $A$ is invariant if the sub-population $A$ does not infect the sub-population $A^c$.
If $\kk$ is symmetric, then $A$ is invariant if and only if $A^c$ is invariant. \medskip

A kernel $\kk$ is \emph{irreducible} or \emph{connected} if any $\kk$-invariant set $A$ is
such that a.s. $A=\emptyset$ or a.s. $A=\Omega$. According to
\cite[Theorem~V.6.6]{schaefer_banach_1974}, if $\kk$ is an irreducible kernel with finite
double norm, then we have $R_0[\kk]>0$. If the kernel is positive a.s., then it is
irreducible. Following \cite[Definition~2.11]{bjr}, we say that a kernel is
\emph{quasi-irreducible} if $\kk$ restricted to $\{\kk \equiv 0\}^c$, with $\{\kk \equiv
0\}=\{x\in \Omega\, \colon\, \kk(x, \Omega) + \kk( \Omega, x)=0\}$, is irreducible. The
quasi-irreducible property was introduced for symmetric kernel; for general kernel one can
consider the following weaker property. A kernel $\kk$ is \emph{monatomic} if the
operator $T_\kk$ has a unique (up to a multiplicative constant) non-negative
eigenfunction. Intuitively, this corresponds to have only one irreducible component.
Formally, this is also equivalent to the following two properties:
\begin{propenum}
\item\label{item:oa} There exists a measurable subset $\oa\subset \Omega$, the irreducible
  component or \emph{atom} such that:
  \begin{itemize}
    \item $\mu(\oa)>0$ and the kernel $\kk$ restricted to $\Omega_a$ is irreducible.
    \item If a.s. $\oa^c\neq \emptyset$ then the restriction of $T_\kk$ to $\oa^c$ is
      quasi-nilpotent, that is, $R_e[\kk](\ind{\oa^c})=0$. 
  \end{itemize}
 
\item\label{item:oi} There exists a measurable subset $\oi\subset \oa^c$,
  ``the sub-population infected by'' $\oa$, such that:
  \begin{itemize}
\item The sets $\oa\cup \oi$ and $\oi$ are invariant. 
\item The set $\oi$ is the minimal set such that $\oa\cup \oi$ is
  invariant: if $A$ is invariant and $\oa\subset A$ then
    a.s. $\oi\subset A$. 
  \end{itemize}
\end{propenum} 
In the epidemiological setting, the sub-population $\oi$ can only infect
itself, and the sub-population $\oa$ infects only itself and $\oi$; the
set $\oa\cup\oi$ corresponds to the support of the endemic equilibrium
in the supercritical regime, see~\cite[Lemma~5.12]{ddz-theo}. We refer
to \cite{schwartz61} for further details on the decomposition of a
kernel on its irreducible components; in particular the sets $\oa$ and
$\oi$ are unique up to the a.s.\ equivalence. We represented in
Figure~\ref{fig:monatomic} a monatomic kernel and in
Figure~\ref{fig:quasi} a quasi-irreducible kernel; the set $\Omega$
being ``nicely ordered'' so that the representation of the kernels are
upper triangular.

\begin{rem}\label{rem:quasi-irr}
  Irreducible and quasi-irreducible kernels are also monatomic (take $\oa=\{\kk\equiv 0\}^
  c$ and $\oi=\emptyset$). If the kernel $\kk$ is monatomic and symmetric, then we get
  $\kk=\ind{\oa}\,\kk\, \ind{\oa}$ and thus the kernel $\kk$ is quasi-irreducible. 
  \medskip
   
  The notion of irreducibility of a kernel depends only on its support: the kernel $\kk$
  is irreducible (resp.\ quasi-irreducible, resp. monatomic) if and only if the kernel
  $\ind{\{\kk>0\}}$ is irreducible (resp.\ quasi-irreducible, resp. monatomic).
  Furthermore, if $\kk$ is monatomic, then the kernels $\kk$ and $\ind{\{\kk>0\}}$ have
  the same atom $\oa$ and the same set $\oi$ infected by $\oa$.
\end{rem}

\begin{figure}
  \begin{subfigure}[T]{.5\textwidth}
    \centering
    \includegraphics[page=1]{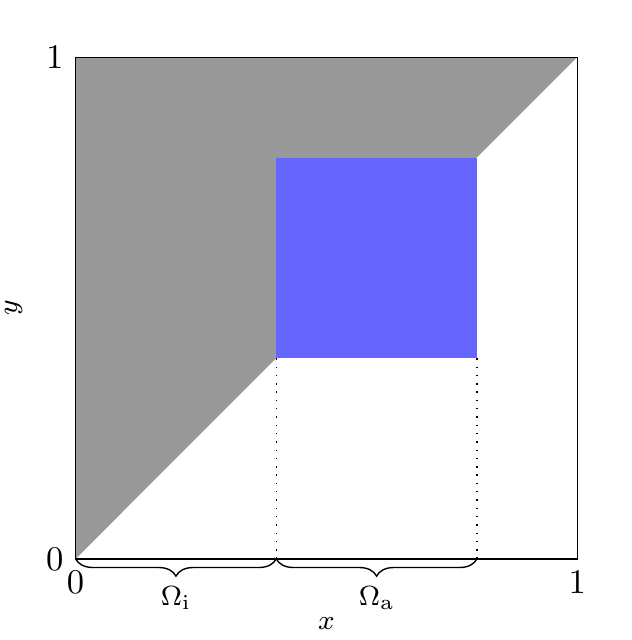}
    \caption{A representation of a monatomic kernel.}
    \label{fig:monatomic}
  \end{subfigure}%
  \begin{subfigure}[T]{.5\textwidth}
    \centering
    \includegraphics[page=2]{mono-atomic}
    \caption{A representation of a quasi-irreducible kernel.}
    \label{fig:quasi}
  \end{subfigure}
  \caption{Example of a monatomic and quasi-irreducible kernels
    $(x,y)\mapsto \kk(x,y)$, where $\kk(x,y)=0$ on the white zone and
    $\kk$ reduced to
    the blue zone is irreducible. }
  \label{fig:monatomic-quasi}
\end{figure}

The introduction of monatomic kernel is also motivated by the following result which can
be deduced from \cite[Theorem~V.6.6]{schaefer_banach_1974} and
\cite[Theorem~8]{schwartz61}, see also Section~\ref{sec:reducible}.

\begin{lemma}\label{lem:R0simple}
  Let $\kk$ be a kernel with finite double norm and set $R_0=R_0[\kk]$. If the kernel
  $\kk$ is monatomic then $R_0>0$ and $R_0$ is simple (\emph{i.e.} $\mult(R_0, T_\kk)=1$).
  If $R_0$ is simple and the only eigenvalue in $(0, +\infty )$, then the kernel $\kk$ is
  monatomic.
\end{lemma}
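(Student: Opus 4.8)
The plan is to prove both implications of Lemma~\ref{lem:R0simple} by leveraging the Perron-Frobenius / Krein-Rutman theory for positive compact operators, as packaged in \cite[Theorem~V.6.6]{schaefer_banach_1974}, together with the structural decomposition for monatomic kernels recorded in properties~\ref{item:oa} and~\ref{item:oi} above.

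\begin{proof}
First I would prove the forward implication. Assume $\kk$ is monatomic with atom $\oa$ satisfying $\mu(\oa)>0$ and such that $\kk$ restricted to $\oa$ is irreducible. Writing $\kk_\oa=\ind{\oa}\,\kk\,\ind{\oa}$ for this restriction, irreducibility together with the finite double norm gives $R_0[\kk_\oa]>0$ by \cite[Theorem~V.6.6]{schaefer_banach_1974}, which also asserts that this spectral radius is a simple eigenvalue of $T_{\kk_\oa}$ with a strictly positive eigenfunction. Since $T_{\kk_\oa}\leq T_\kk$ as positive operators, Lemma~\ref{lem:prop-spec-mult}~\ref{item:A-B} yields $R_0=R_0[\kk]\geq R_0[\kk_\oa]>0$. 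The content of \cite[Theorem~8]{schwartz61} is that, for a monatomic kernel, the spectral radius $R_0$ is attained \emph{only} on the atomic block: the restriction to $\oa^c$ is quasi-nilpotent, so it contributes nothing to the spectrum in $\C^*$, and the off-diagonal coupling through $\oi$ does not create new eigenvalues equal in modulus to $R_0$. Hence the peripheral spectrum of $T_\kk$ reduces to that of $T_{\kk_\oa}$ on the relevant block, and $R_0$ remains a simple eigenvalue; equivalently, the uniqueness (up to scaling) of the non-negative eigenfunction, which is exactly the defining property of monatomic, forces $\mult(R_0,T_\kk)=1$.

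For the converse, suppose $R_0$ is simple and is the only eigenvalue of $T_\kk$ lying in $(0,+\infty)$. I must show $\kk$ is monatomic, i.e. that $T_\kk$ admits a unique non-negative eigenfunction up to scaling. By \cite[Theorem~V.6.6]{schaefer_banach_1974}, every non-negative eigenfunction of a positive compact operator is associated to a positive eigenvalue, and in fact to the spectral radius of the irreducible block on which it is supported. If there were two linearly independent non-negative eigenfunctions, they would either share the eigenvalue $R_0$, contradicting simplicity, or correspond to two distinct positive eigenvalues, contradicting the hypothesis that $R_0$ is the only eigenvalue in $(0,+\infty)$. Thus the non-negative eigenfunction is unique up to a multiplicative constant, which is precisely the definition of monatomic.

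The main obstacle I anticipate is the converse direction: carefully ruling out the pathological case where $T_\kk$ has several irreducible components whose individual spectral radii coincide with $R_0$ but whose eigenfunctions are supported on disjoint invariant sets. Such a configuration would produce independent non-negative eigenfunctions while keeping $R_0$ as the top eigenvalue, so the hypothesis that $R_0$ is \emph{simple} (not merely maximal) is exactly what excludes it: two disjoint components sharing the same spectral radius force a two-dimensional eigenspace, violating $\mult(R_0,T_\kk)=1$. Making this argument rigorous requires invoking the structure of invariant sets and the Frobenius decomposition of \cite{schwartz61} rather than a bare application of the Krein-Rutman theorem, and this is where the proof must be most careful.
\end{proof}
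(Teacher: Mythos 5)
Your overall strategy --- Schaefer's theorem for the irreducible blocks combined with Schwartz's decomposition of the spectrum over those blocks --- is exactly the route the paper intends: it gives no written proof of Lemma~\ref{lem:R0simple}, only the citations to \cite[Theorem~V.6.6]{schaefer_banach_1974} and \cite[Theorem~8]{schwartz61} together with a pointer to Section~\ref{sec:reducible}. Your forward implication is essentially correct: $R_0\geq R_0[\ind{\oa}\kk\ind{\oa}]>0$ by positivity and irreducibility of the atomic block, and the decomposition of the point spectrum in $\C^*$ over the blocks reduces $\mult(R_0,T_\kk)$ to $\mult(R_0,T_{\ind{\oa}\kk\ind{\oa}})=1$. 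One caveat: your closing ``equivalently'' is a non-sequitur. Uniqueness up to scaling of the non-negative eigenfunction controls at best the geometric multiplicity inside the positive cone; it does not by itself exclude sign-changing eigenfunctions for $R_0$ or a nontrivial Jordan block, so it does not ``force $\mult(R_0,T_\kk)=1$''. The algebraic simplicity genuinely comes from \cite[Theorem~V.6.6]{schaefer_banach_1974} applied to the irreducible block together with the multiplicity identity \eqref{eq:mult-vpFrob}, not from the definition of monatomic.

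The converse is where the genuine gap lies, and you have half-diagnosed it yourself. Two specific problems. First, the claim that ``every non-negative eigenfunction of a positive compact operator is associated to a positive eigenvalue'' is false: a non-negative eigenfunction can have eigenvalue $0$. Take $\Omega=\{1,2\}$ with the uniform measure, $\kk(1,1)=1$ and $\kk\equiv 0$ elsewhere; then $\ind{\{2\}}$ is a non-negative eigenfunction for the eigenvalue $0$, even though this kernel is monatomic. So your dichotomy (``share $R_0$'' versus ``two distinct positive eigenvalues'') does not cover all cases, and the definition of monatomic has to be read as concerning eigenfunctions for non-zero eigenvalues. Second, and more importantly, the assertion that a non-negative eigenfunction is ``associated to the spectral radius of the irreducible block on which it is supported'' is precisely the structural fact that needs proof; it is not contained in the Krein--Rutman theorem or in \cite[Theorem~V.6.6]{schaefer_banach_1974}. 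The way to close this is the Frobenius decomposition of Section~\ref{sec:reducible}: every $i\in I$ contributes an eigenvalue $R_0[\kk_i]>0$ of $T_\kk$ by \eqref{eq:mult-vpFrob}; the hypothesis that $R_0$ is the only eigenvalue in $(0,+\infty)$ forces $R_0[\kk_i]=R_0$ for all $i$; and Remark~\ref{rem:Frob}~\ref{item:Frob}, namely $\mult(R_0,T_\kk)=\sharp\{i\in I\,\colon\, R_0[\kk_i]=R_0\}$, together with simplicity forces $\sharp I=1$, hence $\kk$ is monatomic by Remark~\ref{rem:Frob}~\ref{item:1-red}. Until that step is carried out, your converse is a plausible sketch rather than a proof.
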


\subsection{The effective reproduction number~%
  \texorpdfstring{$R_e$}{Re}}\label{sec:weak-topo}

A \emph{vaccination strategy}~$\eta$ of a vaccine with perfect efficiency is an element
of~$\Delta$, where~$\eta(x)$ represents the proportion of \emph{\textbf{non-vaccinated}}
individuals with feature~$x$. In particular $\eta=\un$ (the constant function equal to
1) corresponds to no vaccination and $\eta=\zero$ (the constant function equal to 0)
corresponds to the whole population vaccinated. Notice that~$\eta\, \mathrm{d} \mu$
corresponds in a sense to the effective population. Let $\kk$ be a kernel on $\Omega$
with finite double norm on $L^p$. For~$\eta\in \Delta$, the kernel~$\kk \eta$ has also a
finite double norm on~$L^p$ and the operator~$M_\eta$ is bounded, so that the operator
$T_{\kk \eta} = T_\kk M_\eta$ is compact. We can define the \emph{effective spectrum}
function~$\spec[\kk]$ from~$\Delta$ to~$\ck$ by:
\begin{equation}\label{eq:def-sigma_e}
  \spec[\kk](\eta)=\spec(T_{\kk\eta}), 
\end{equation}
the \emph{effective reproduction number} function $R_e[\kk]=\mathrm{rad}\circ \spec[\kk]$
from~$\Delta$ to~$\R_+$ by:
\begin{equation}
  \label{eq:def-R_e}
  R_e[\kk](\eta)=\mathrm{rad}( \spec(T_{\kk \eta}))=\rho(T_{\kk\eta}),
\end{equation}
and the corresponding reproduction number is then given by $ R_0[\kk]=R_e[\kk](\un)$. 
When there is no risk of confusion on the kernel~$\kk$, we simply write $R_e$ and $R_0$
for the function~$R_e[\kk]$ and the number~$R_0[\kk]$.

We can see~$\Delta$ as a subset of~$L^1 $, and consider the corresponding \emph{weak
topology}: a sequence~$(g_n, n \in \N)$ of elements of~$\Delta$ converges weakly to~$g$ if
for all~$h \in L^\infty $ we have:
\begin{equation}
  \label{eq:weak-cv}
  \lim\limits_{n \to \infty} \int_\Omega h g_n \, \mathrm{d}\mu= \int_\Omega h
  g\, \mathrm{d}\mu.
\end{equation}
Notice that \eqref{eq:weak-cv} can easily be extended to any function $h\in L^q$ for any
$q\in (1, +\infty )$; so that the weak-topology on $\Delta$, seen as a subset of $L^p$
with $1/p+1/q=1$, can be seen as the trace on $\Delta$ of the weak topology on $L^p$. From
the Banach-Alaoglu theorem, we get that the set~$\Delta$ endowed with the weak topology is
compact and sequentially compact, see \cite[Lemma~3.1]{ddz-theo}. 

We also recall the properties of the effective reproduction number given
in \cite[Proposition~4.1 and Theorem~4.2]{ddz-theo}. 

\begin{proposition}\label{prop:R_e}
 Let $\kk$ be a kernel on a probability space $(\Omega, \cf \mu)$ with finite double
  norm. Then, the functions~$\spec[\kk]$
  and~$R_e=R_e[\kk]$ are continuous functions from $\Delta$ respectively
  to~$\ck$ (endowed with the Hausdorff distance) and
  to~$\R_+$. Furthermore, the function~$R_e=R_e[\kk]$ satisfies the
  following properties:
  \begin{propenum}
  \item\label{prop:a.s.-Re}%
    $R_e(\eta_1)=R_e(\eta_2)$ if~$\eta_1=\eta_2,\, \mu\as$, and~$\eta_1, \eta_2\in
    \Delta$,
  \item\label{prop:min_Re}%
    $R_e(\zero) = 0$ and~$R_e(\un) = R_0$,
  \item\label{prop:increase}%
    $R_e(\eta_1) \leq R_e(\eta_2)$ for all~$\eta_1, \eta_2\in \Delta$ such
    that~$\eta_1\leq \eta_2$,
  \item\label{prop:normal}%
    $R_e(\lambda \eta) = \lambda R_e(\eta)$, for all~$\eta \in \Delta$ and~$\lambda \in
    [0,1]$.
  \end{propenum}
\end{proposition}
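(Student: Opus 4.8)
The plan is to reduce everything to the factorization $T_{\kk\eta}=T_\kk M_\eta$, where $T_\kk$ is the fixed compact integral operator and $M_\eta$ is multiplication by $\eta$. The four algebraic properties then follow from elementary facts about positive operators and the spectral radius, while the only genuinely analytic point is the continuity of $\eta\mapsto\spec(T_{\kk\eta})$ for the weak topology, which I would obtain from the collective-compactness machinery of Lemma~\ref{lem:prop-spec-mult}~\ref{item:collectK-cv}.

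For the algebraic statements I would argue as follows. If $\eta_1=\eta_2$ $\mu$-a.s., then $M_{\eta_1}=M_{\eta_2}$ as operators on $L^p$ (functions agreeing a.s.\ are identified), hence $T_{\kk\eta_1}=T_{\kk\eta_2}$ and \ref{prop:a.s.-Re} is immediate. For \ref{prop:min_Re}, note that $M_\un=\mathrm{Id}$ gives $T_{\kk\un}=T_\kk$, so $R_e(\un)=\rho(T_\kk)=R_0$ by definition~\eqref{eq:def-R0}, while $M_\zero=0$ gives $T_{\kk\zero}=0$, whose spectrum is $\{0\}$, so $R_e(\zero)=0$. For the monotonicity \ref{prop:increase}, if $\eta_1\leq\eta_2$ then the kernels satisfy $\kk\eta_1\leq\kk\eta_2$ (because $\kk\geq 0$), so $T_{\kk\eta_1}$, $T_{\kk\eta_2}$ and their difference $T_{\kk(\eta_2-\eta_1)}$ are all positive operators on the Banach lattice $L^p$; Lemma~\ref{lem:prop-spec-mult}~\ref{item:A-B} applied with $A=T_{\kk\eta_2}$ and $B=T_{\kk\eta_1}$ yields $R_e(\eta_2)\geq R_e(\eta_1)$. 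Finally, for \ref{prop:normal}, homogeneity of multiplication gives $T_{\kk(\lambda\eta)}=\lambda T_{\kk\eta}$, and since $\rho(\lambda A)=\lambda\rho(A)$ for $\lambda\geq 0$ we obtain $R_e(\lambda\eta)=\lambda R_e(\eta)$ (noting $\lambda\eta\in\Delta$).

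For the continuity, following \cite{ddz-theo} I would establish sequential continuity for the weak topology, which together with the compactness and sequential compactness of $\Delta$ recalled above gives the asserted continuity. Let $\eta_n\to\eta$ weakly in $\Delta$. The first key step is that $A_n:=T_{\kk\eta_n}$ converges strongly to $A:=T_{\kk\eta}$: for fixed $g\in L^p$ the products $(\eta_n-\eta)g$ tend to $0$ weakly in $L^p$ (testing against $h\in L^q$ uses $hg\in L^1$ and the boundedness of $\Delta$ in $L^\infty$), and since $T_\kk$ is compact it sends this weakly null sequence to a norm-null sequence, so $\norm{A_n g-Ag}_p\to 0$. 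The second key step is that $(A_n)$ is collectively compact: as $\norm{M_{\eta_n}g}_p\leq\norm{g}_p$, the set $\{A_n g:\ n\in\N,\ \norm{g}_p\leq 1\}$ is contained in the image under $T_\kk$ of the closed unit ball of $L^p$, which is relatively compact since $T_\kk$ is compact. Lemma~\ref{lem:prop-spec-mult}~\ref{item:collectK-cv} then gives $\spec(T_{\kk\eta_n})\to\spec(T_{\kk\eta})$ in $(\ck,d_\mathrm{H})$, and composing with the Lipschitz map $\mathrm{rad}$ yields $R_e(\eta_n)\to R_e(\eta)$.

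The main obstacle is precisely this continuity step, and the subtlety is that weak convergence of $\eta_n$ does \emph{not} make the kernels $\kk\eta_n$ converge in double norm, so Corollary~\ref{cor:cv-kn} cannot be applied directly. The resolution is to exploit compactness of $T_\kk$ twice: once to upgrade the weak convergence of $(\eta_n-\eta)g$ to norm convergence, giving strong operator convergence, and once to obtain collective compactness, so that the spectral continuity of Lemma~\ref{lem:prop-spec-mult}~\ref{item:collectK-cv} becomes available.
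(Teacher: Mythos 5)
The paper does not actually prove this proposition: it is imported verbatim from \cite[Proposition~4.1 and Theorem~4.2]{ddz-theo}, so there is no in-text proof to compare against. Your reconstruction follows exactly the intended route: the algebraic points \ref{prop:a.s.-Re}--\ref{prop:normal} from the factorization $T_{\kk\eta}=T_\kk M_\eta$, positivity of $T_{\kk(\eta_2-\eta_1)}$ together with Lemma~\ref{lem:prop-spec-mult}~\ref{item:A-B} for the monotonicity, and, for the continuity, strong convergence of $T_{\kk\eta_n}$ to $T_{\kk\eta}$ (via complete continuity of the compact operator $T_\kk$ applied to the weakly null sequence $(\eta_n-\eta)g$) combined with collective compactness of the family $\{T_\kk M_{\eta_n}\}$ and Anselone's spectral continuity as recorded in Lemma~\ref{lem:prop-spec-mult}~\ref{item:collectK-cv}. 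This is both correct and the same argument as in the cited source.

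The one place where your justification is loose is the final reduction ``sequential continuity $+$ compactness and sequential compactness of $\Delta$ $\Rightarrow$ continuity'': for a general topological space this implication fails (a compact, sequentially compact space need not be sequential). What saves the argument here is that $\Delta$ is a bounded, convex, norm-closed, hence weakly compact subset of the reflexive space $L^p$, and weakly compact subsets of a Banach space are angelic (Eberlein--\v{S}mulian): for subsets of $\Delta$ the weak closure coincides with the weak sequential closure, so the preimage under $R_e$ (or $\spec[\kk]$) of a closed set, being weakly sequentially closed by your argument, is weakly closed. With that one-line justification added, the proof is complete.
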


We complete Corollary~\ref{cor:cv-kn} on the stability property of the spectrum and
spectral radius with respect to the kernel~$\kk$, see \cite[Proposition~4.3]{ddz-theo}. 

\begin{proposition}[Stability of~$\grR$ and~$\grS$]\label{prop:Re-stab}
  Let~$p\in (1, +\infty )$. Let~$(\kk_n, n\in \N)$ and~$\kk$ be kernels on~$\Omega$ with
  finite double norms on~$L^p$. If~$\lim_{n\rightarrow\infty} \norm{\kk_n - \kk}_{p,q}=0$,
  then we have:
  \begin{equation}
    \label{eq:Re-stab}
    \lim_{n\rightarrow\infty }\, \sup_{\eta\in \Delta} \Big|R_e[\kk_n](\eta) -
    R_e[\kk](\eta)\Big|=0
    \quad\text{and}\quad
    \lim_{n\rightarrow\infty }\, \sup_{\eta\in \Delta} d_\mathrm{H}\Big (
    \spec[\kk_n](\eta), \spec [\kk](\eta)\Big)=0.
  \end{equation}
\end{proposition}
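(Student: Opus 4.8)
The plan is to reduce the uniform-in-$\eta$ statement to the single-sequence convergence already recorded in Lemma~\ref{lem:prop-spec-mult}~\ref{item:collectK-cv} (of which Corollary~\ref{cor:cv-kn} is the pointwise-in-$\eta$ analogue), the uniformity being supplied by the weak sequential compactness of $\Delta$. The starting point is the elementary but crucial observation that the double norm controls the perturbation uniformly in the strategy: since $0\le \eta\le 1$ for every $\eta\in\Delta$, one has $\abs{((\kk_n-\kk)\eta)(x,y)}\le \abs{(\kk_n-\kk)(x,y)}$, hence
\[
  \sup_{\eta\in\Delta}\norm{\kk_n\eta-\kk\eta}_{p,q}\le \norm{\kk_n-\kk}_{p,q},
\]
so that, by~\eqref{eq:double-norm-norm}, $\sup_{\eta\in\Delta}\norm{T_{\kk_n\eta}-T_{\kk\eta}}_{L^p}\le \norm{\kk_n-\kk}_{p,q}\to 0$. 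Writing $T_{\kk\eta}=T_\kk M_\eta$ with $\norm{M_\eta g}_p\le\norm{g}_p$, one also sees that $\{T_{\kk\eta}:\eta\in\Delta\}$ and each sequence $\{T_{\kk_n\eta_n}:n\in\N\}$ are collectively compact, since their images of the unit ball lie in $T_\kk(\bar B_{L^p})$, resp.\ in $\bigcup_n T_{\kk_n}(\bar B_{L^p})$, which is relatively compact because $\norm{T_{\kk_n}-T_\kk}_{L^p}\to 0$ and $T_\kk$ is compact.

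I would then argue by contradiction. Suppose the first convergence in~\eqref{eq:Re-stab} fails; then there are $\varepsilon>0$, a subsequence (still denoted by $n$) and strategies $\eta_n\in\Delta$ with
\[
  \abs{R_e[\kk_n](\eta_n)-R_e[\kk](\eta_n)}\ge \varepsilon.
\]
Since $\Delta$ is weakly sequentially compact, after a further extraction I may assume $\eta_n\rightharpoonup\eta^\star$ weakly for some $\eta^\star\in\Delta$. The heart of the matter is to show that both $T_{\kk_n\eta_n}\to T_{\kk\eta^\star}$ and $T_{\kk\eta_n}\to T_{\kk\eta^\star}$ strongly. For fixed $g\in L^p$ I split
\[
  T_{\kk_n\eta_n}g-T_{\kk\eta^\star}g
  =T_{(\kk_n-\kk)\eta_n}g+T_\kk\big((\eta_n-\eta^\star)g\big).
\]
The first term has norm at most $\norm{\kk_n-\kk}_{p,q}\,\norm{g}_p\to 0$ by the uniform bound above. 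For the second term, take first $g\in L^\infty$: then $hg\in L^q$ for every $h\in L^q$ and $\int hg\,(\eta_n-\eta^\star)\,\mathrm{d}\mu\to 0$ by weak convergence of $\eta_n$, so $(\eta_n-\eta^\star)g\rightharpoonup 0$ weakly in $L^p$; since $T_\kk$ is compact it maps this weakly null sequence to a norm null one, whence $T_\kk\big((\eta_n-\eta^\star)g\big)\to 0$. The general case $g\in L^p$ follows by density, using $\norm{T_\kk\big((\eta_n-\eta^\star)(g-g')\big)}_p\le\norm{T_\kk}_{L^p}\norm{g-g'}_p$ for $g'\in L^\infty$. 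The same computation with $\kk_n$ replaced by $\kk$ gives $T_{\kk\eta_n}g\to T_{\kk\eta^\star}g$.

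Applying Lemma~\ref{lem:prop-spec-mult}~\ref{item:collectK-cv} to each of these two collectively compact, strongly convergent sequences yields
\[
  \spec(T_{\kk_n\eta_n})\to\spec(T_{\kk\eta^\star})
  \quad\text{and}\quad
  \spec(T_{\kk\eta_n})\to\spec(T_{\kk\eta^\star})
\]
in $(\ck,d_\mathrm{H})$, and hence, applying the continuous map $\mathrm{rad}$, $R_e[\kk_n](\eta_n)\to R_e[\kk](\eta^\star)$ and $R_e[\kk](\eta_n)\to R_e[\kk](\eta^\star)$; this contradicts the displayed gap and proves the first limit in~\eqref{eq:Re-stab}. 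The second limit (for the spectra) follows from the very same two convergences together with the triangle inequality for $d_\mathrm{H}$.

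I expect the only delicate point to be the weak-to-norm upgrade $T_\kk\big((\eta_n-\eta^\star)g\big)\to 0$: it is exactly here that the \emph{compactness} of $T_\kk$ (and not merely its boundedness) is used, through the general fact that a compact operator carries weakly convergent sequences to norm-convergent ones. Everything else is bookkeeping: the $\eta$-independence of the estimate $\norm{\kk_n\eta-\kk\eta}_{p,q}\le\norm{\kk_n-\kk}_{p,q}$ is what turns the pointwise-in-$\eta$ statement of Corollary~\ref{cor:cv-kn} into a uniform one, while the weak sequential compactness of $\Delta$ is what lets the contradiction argument close.
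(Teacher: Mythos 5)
Your proof is correct and follows essentially the route the paper intends: the paper itself defers to \cite[Proposition~4.3]{ddz-theo}, whose argument is precisely this combination of the uniform estimate $\sup_{\eta\in\Delta}\norm{\kk_n\eta-\kk\eta}_{p,q}\le\norm{\kk_n-\kk}_{p,q}$, the weak sequential compactness of $\Delta$, and the collectively compact approximation result recorded here as Lemma~\ref{lem:prop-spec-mult}~\ref{item:collectK-cv}. All the steps check out, including the one you flag as delicate (the weak-to-norm upgrade via compactness of $T_\kk$ on the reflexive space $L^p$), so nothing further is needed.
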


\section{Spectrum-preserving transformations}\label{sec:equivalent}

In this section, we consider a given probability state space $(\Omega, \cf, \mu)$, and we
discuss two operations on the kernel $\kk$ that leave the functions $\spec[\kk]$ and
$R_e[\kk]$ defined on $\Delta$. Recall the convention~\eqref{eq:def-fkg} for the kernel $f
\kk g$ defined from the kernel $\kk$ and the non-negative functions $f$ and $g$.

\begin{lemma}\label{lem:hk/h}
  Let $\kk$ be a kernel on $\Omega$ and $h$ be a non-negative measurable function on
  $\Omega$. 
  \begin{propenum}
  \item\label{lem:hk=kh} If $h\kk$ and $\kk h$ have finite double norms (with possibly
    different $p$), then we have:
    \begin{align*}
      \spec[h\kk]= \spec[h\kk\mathds{1}_{\set{h>0}}] &=
      \spec[\mathds{1}_{\set{h>0}}\kk h] = \spec[\kk h],\\
      R_e[h\kk]=R_e[h \kk \mathds{1}_{\set{h>0}}] &=
      R_e[\mathds{1}_{\set{h>0}} \kk h]=R_e[\kk h]. 
    \end{align*}
  \item\label{lem:k=hk/h} If $h$ is positive and if $\kk$ and $h\kk/h$ have finite double
    norms (with possibly different $p$), then we have:
    \[ \spec[\kk]= \spec[h\kk/h] \quad\text{and}\quad R_e[\kk]=R_e[h\kk/h]. \]
  \item\label{lem:k=ktop} If $\kk$ and its transpose $\kk^\top:(x,y) \mapsto \kk(y,x)$ 
    have finite double norms (with possibly different $p$), then we have:
    \[ \spec[\kk]=\spec[\kk^\top] \quad \text{and} \quad R_e[\kk] = R_e[\kk^\top]. \]
  \end{propenum} 
\end{lemma}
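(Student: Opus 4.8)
The plan is to derive all three identities from the spectral invariance results collected in Lemma~\ref{lem:prop-spec-mult}, namely the similarity-type equality $\spec(AB)=\spec(BA)$ together with the equality $\mult(\lambda,AB)=\mult(\lambda,BA)$ for $\lambda\in\C^*$ from~\eqref{eq:r(AB)-mult}, and the adjoint invariance~\eqref{eq:adjoint-mult}. The guiding observation is that for a vaccination strategy $\eta\in\Delta$, the effective operator attached to a kernel $\ell$ is $T_{\ell\eta}=T_\ell M_\eta$, so each claimed identity should reduce to rewriting the product $T_\ell M_\eta$ as a product of operators in the reverse order, or as an adjoint, using the explicit integral form~\eqref{eq:def-Tkk} of $T_\ell$ and the multiplication operators $M_f$.

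Let me treat~\ref{lem:k=ktop} first, since it is the cleanest. For a kernel $\kk$ with finite double norm, the operator $T_\kk$ on $L^p$ is compact, and its adjoint $T_\kk^\top$ acts on $L^q$ as the integral operator with kernel $\kk^\top$. For $\eta\in\Delta$ the multiplication operator $M_\eta$ is self-adjoint, so $(T_{\kk\eta})^\top=(T_\kk M_\eta)^\top=M_\eta T_\kk^\top=M_\eta T_{\kk^\top}$. By~\eqref{eq:r(AB)} applied to the compact factor, $\spec(M_\eta T_{\kk^\top})=\spec(T_{\kk^\top}M_\eta)=\spec(T_{\kk^\top\eta})$, and by~\eqref{eq:adjoint-mult} this equals $\spec(T_{\kk\eta})$. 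Taking radii gives the $R_e$ statement. The only subtlety is that $T_\kk$ and $T_{\kk^\top}$ may live on different $L^p$ spaces, but the spectra away from $0$ are unaffected by the ambient space by~\eqref{eq:sT=sT'}, and $0$ always lies in both spectra in infinite dimension.

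For~\ref{lem:hk=kh}, the core computation is that $T_{h\kk\eta}=M_h T_{\kk\eta}=M_h(T_\kk M_\eta)$, and cyclically permuting the compact factor $T_\kk$ via~\eqref{eq:r(AB)-mult} moves $M_h$ to the right, yielding $\spec[h\kk](\eta)\setminus\{0\}=\spec(T_\kk M_\eta M_h)\setminus\{0\}=\spec[\kk h](\eta)\setminus\{0\}$ since $M_\eta M_h=M_{\eta h}$ encodes exactly the kernel $\kk h$ evaluated on $\eta$. The insertion of $\mathds{1}_{\set{h>0}}$ in the middle equalities should follow because multiplying by $h$ already annihilates the region $\{h=0\}$, so $h\kk=h\kk\mathds{1}_{\set{h>0}}$ as kernels up to the $\mu$-null irrelevance captured by Proposition~\ref{prop:R_e}\ref{prop:a.s.-Re}; here one checks that on $\{h>0\}$ the indicator acts as the identity for spectral purposes. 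Finally~\ref{lem:k=hk/h} is a conjugation: writing $T_{(h\kk/h)\eta}=M_h T_{\kk\eta} M_{1/h}$ when $h>0$, the operators $M_h$ and $M_{1/h}$ are genuine inverses (as possibly unbounded multiplication operators, but their composition with the compact $T_{\kk\eta}$ is well-behaved), so $T_{(h\kk/h)\eta}$ is similar to $T_{\kk\eta}$ and the spectra coincide; alternatively, one reduces it to~\ref{lem:hk=kh} applied twice, first with the weight $h$ on the left and then with $1/h$ on the right.

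The main obstacle I anticipate is the bookkeeping around \emph{unboundedness} and \emph{differing $L^p$ spaces}: the weights $h$ and $1/h$ need not give bounded multiplication operators, so the algebraic manipulations $M_h T_\kk M_{1/h}$ must be justified as genuine bounded operators on the relevant spaces rather than formal products. The clean way to sidestep this is never to work with $\rho(AB)=\rho(BA)$ for unbounded factors directly, but instead to always keep one compact integral operator and cite~\eqref{eq:r(AB)-mult} with the finite-double-norm hypotheses on $h\kk$, $\kk h$, and $h\kk/h$ guaranteeing that each kernel product individually defines a compact operator of the required type. In other words, the hypotheses ``with possibly different $p$'' are there precisely to ensure each $T$ in sight is a legitimate compact operator on some $L^p$, and the spectral identities on $\C^*$ patch together across these spaces via~\eqref{eq:sT=sT'}.
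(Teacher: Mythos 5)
Your overall strategy --- reduce everything to the invariances $\spec(AB)=\spec(BA)$ and $\spec(A)=\spec(A^\top)$ from Lemma~\ref{lem:prop-spec-mult} --- is exactly the paper's, and your treatment of~\ref{lem:k=ktop} matches the paper's proof. The gap is in~\ref{lem:hk=kh} and~\ref{lem:k=hk/h}: you correctly identify that $M_h$ and $M_{1/h}$ may be unbounded (and, in~\ref{lem:hk=kh}, that $\kk$ itself need not have a finite double norm, so $T_\kk$ in your factorization $M_h(T_\kk M_\eta)$ is not even a legitimate operator), but your proposed fix does not close the hole. Equation~\eqref{eq:r(AB)-mult} is stated for $A,B\in\cll(E)$ with $A$ compact; it cannot be invoked when one factor is an unbounded multiplication operator, no matter how you group the product, and the fact that the \emph{product} kernel $h\kk$ or $h\kk/h$ has a finite double norm does not license the cyclic permutation of factors that are not individually bounded. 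Likewise \eqref{eq:sT=sT'} concerns a single compact operator restricted to a densely embedded subspace; it does not ``patch'' spectra of products across different $L^p$ realizations of unbounded weights. As a smaller point, your claim that $h\kk=h\kk\mathds{1}_{\{h>0\}}$ as kernels is false (the left weight acts on the first variable, the indicator on the second); the equality of their spectra is itself a consequence of the cyclic permutation, not an a.s.\ identification of kernels.

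The paper fills this gap with a truncation-and-stability argument that your write-up is missing: first prove the identity when $\kk$, $h$ and $1/h$ are all bounded (so that every factor lies in $\cll(L^p)$ and \eqref{eq:r(AB)-mult} genuinely applies), then set $\kk_n=(v_n\kk v_n)\wedge n$ with $v_n=\mathds{1}_{\{n\geq h\geq 1/n\}}$ and $h_n=n^{-1}\vee(h\wedge n)$, note that $\norm{\kk-\kk_n}_{p,q}\to 0$ and $\norm{h\kk/h - h_n\kk_n/h_n}_{p',q'}\to 0$ by dominated convergence, and conclude by the uniform stability of $\spec[\cdot]$ under double-norm convergence (Proposition~\ref{prop:Re-stab}). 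Without this (or an equivalent substitute, e.g.\ a direct eigenfunction-transport argument with the requisite integrability checks), the unbounded case of~\ref{lem:hk=kh} and~\ref{lem:k=hk/h} remains unproved.
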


Even if \ref{lem:k=hk/h} is a consequence of \ref{lem:hk=kh}, we state it separately
since~\ref{lem:k=hk/h} and~\ref{lem:k=ktop} describe two modifications of~$\kk$ that leave
the functions~$R_e$ and $\spec$ invariant. See Equation~\eqref{eq:kk=tilde-kk} for an
other transformation on the kernels which leaves the functions $R_e$ and $\spec$
invariant. See also \cite{ddz-pres} for further results in the finite dimensional case. 
 
\begin{proof}
  Since $R_e=\mathrm{rad}\circ \spec$, we only need to prove
  \ref{lem:hk=kh}-\ref{lem:k=ktop} for the function $\spec$. 
  We give the detailed proof of~\ref{lem:k=hk/h} and leave the proof
  of~\ref{lem:hk=kh}, which is very similar, to the reader. We first
  assume that $\kk$, $h$ and $1/h$ are bounded. The operators
  $T_{\kk \eta}$ and $T_{h\kk \eta /h}$ and the multiplication operators
  $M_h$ and $M_{1/h}$ are bounded operators on $L^p$ for
  $p\in (1, +\infty )$. We have, using that
  $T_{\kk \eta /h} = T_\kk \, M_{\eta/h}$ is compact
  and~\eqref{eq:r(AB)-mult} for the second equality:
  \[
    \spec(T_{\kk \eta}) = \spec (T_{\kk \eta/h}\, M_h) = \spec(M_h T_{\kk \eta/h})=
    \spec(T_{h\kk \eta/h}).
  \]
  Since $\eta \in \Delta$ is arbitrary, this gives that $\spec[\kk]=\spec[h\kk /h]$.

  In the general case, we use an approximation scheme. Define the kernel $\kk_n= (v_n\kk
  v_n) \wedge n$ with $v_n=\mathds{1}_{\{n\geq h\geq 1/n\}}$ and the function $h_n=n^{-1}
  \vee (h\wedge n)$ for $n\in \N^*$. From the first part of the proof, we get
  $\spec[\kk_n]=\spec[\kk'_n]$, with $\kk'_n=h_n \kk_n/h_n$. Since $\norm{\kk}_{p,q}$ is
  finite for some $p\in (1, +\infty )$, we get by dominated convergence that
  $\lim_{n\rightarrow \infty } \norm{\kk- \kk_n}_{p,q}=0$, and we deduce from Proposition
  \ref{prop:Re-stab} that $\lim_{n\rightarrow \infty } \spec[\kk_n]=\spec[\kk]$.
  Similarly, setting $\kk'=h\kk/h$, the norm $\norm{\kk'}_{p',q'}$ is finite for some
  $p'\in (1, +\infty )$, and thus $\lim_{n\rightarrow \infty } \norm{\kk'-
  \kk'_n}_{p',q'}=0$, so that $\lim_{n\rightarrow \infty } \spec[\kk'_n]=\spec[\kk']$.
  This proves that $\spec[\kk]=\spec[\kk']$, and thus~\ref{lem:k=hk/h}.

 \medskip

  We now prove \ref{lem:k=ktop}.
  For any $\eta\in \Delta$, the kernel 
  $\kk^\top\eta$ defines a bounded integral operator in $L^q$, whose
  adjoint is $T_{\eta\kk}$. Since 
  the spectrum of an operator and its adjoint are the same, we get $\spec[\kk^\top](\eta)
  = \spec(T_{\kk^\top \eta}) = \spec(T_{\eta \kk}) = \spec(M_\eta T_\kk )=\spec(T_\kk
  M_\eta)= \spec[\kk](\eta)$, where the fourth equality follows once more
  from~\eqref{eq:r(AB)-mult}. Since this is true for any $\eta\in \Delta$, this gives
  $\spec[\kk^\top] = \spec[\kk]$.
\end{proof}

\begin{remark}\label{rem:model=g(-1)k}
  In the infinite dimensional SIS model developed in
  \cite{delmas_infinite-dimensional_2020}, the next generation operator is given by the
  integral operator $T_\kk$, where the kernel~$\kk=k/\gamma$ is defined in terms of a
  transmission rate kernel $k(x,y)$ and a recovery rate function~$\gamma$ by the
  product~$\kk(x,y)=k(x,y)/\gamma(y)$; and the reproduction number~$R_0$ is then the
  spectral radius $\rho(T_\kk)$ of~$T_\kk$. Furthermore the operator $T_{\gamma^{-1} k}$
  appears very naturally in the definition of the maximal equilibrium $\mathfrak{g}$ which
  is solution to \cite[Equation~(24)]{delmas_infinite-dimensional_2020}, that is 
  $T_{\gamma^{-1} k} (\mathfrak{g})=\mathfrak{g}/(1-\mathfrak{g})$. According to
  Lemma~\ref{lem:hk/h}~\ref{lem:hk=kh}, provided that $k/\gamma$ and $\gamma^{-1} k$ have
  finite double norms, the next generation operator $T_{k/\gamma}$ and $T_{\gamma^{-1} k}$
  have the same effective spectrum function.
\end{remark}

We shall use the following extension in the proof of Lemma~\ref{lem:p>p}.

\begin{remark}\label{rem:hk/h-mult}
  Following closely the proof of Lemma~\ref{lem:hk/h}~\ref{lem:k=hk/h} and using
  Corollary~\ref{cor:cv-kn}, we also get that if $h$ is positive and if $\kk$ and $h\kk/h$
  have finite double norms (with possibly different $p$), then we have:
  \begin{equation}\label{eq:mult:k=hk/h}
    \mult(\lambda, T_{\kk})=\mult (\lambda, T_{h\kk/h}) \quad\text{ for all $\lambda\in
    \C^*$.}
  \end{equation}
\end{remark}

\section{Sufficient conditions for convexity or concavity
  of~\texorpdfstring{$R_e$}{Re}}\label{sec:Hill_Longini}

\subsection{A conjecture from Hill and Longini}
\label{sec:comparison_Hill_Longini}

Recall that, in the metapopulation framework, the effective reproduction number is equal
to the spectral radius of the matrix $K\cdot\diag(\eta)$, where $K$ has non-negative
entries and is the next-generation matrix and $\eta$ is the vaccination strategy giving
the proportion of non-vaccinated people in each groups. The Hill-Longini conjecture
appears in \cite{hill-longini-2003} and gives conditions on the spectrum of the
next-generation matrix that implies the convexity or the concavity of the effective
reproduction number. It states that the function $R_e[K]$ is:
\begin{propenum}
\item\label{item:cvx-K} convex when $\spec(K) \subset \R_+$,
\item\label{item:cave-K} concave when $\spec(K) \backslash \{ R_0 \} \subset \R_-$.
\end{propenum}

It turns out that the conjecture cannot be true without additional assumption on the
matrix $K$. Indeed, consider the following next-generation matrix:
\begin{equation}\label{eq:next-gen-counter-conv}
  K =
  \begin{pmatrix}
    16 & 12 & 11 \\
    1 & 12 & 12 \\
    8 & 1 & 1
  \end{pmatrix}
\end{equation}
Its eigenvalues are approximately equal to $24.8$, $2.9$ and $1.3$. Since $R_e$ is
homogeneous, the function is entirely determined by the value it takes on the plane $\{
\eta \, \colon \, \eta_1 + \eta_2 + \eta_3 = 1/3 \}$. The graph of the function $R_e$
restricted to this set has been represented in Figure~\ref{fig:counter-graph-conv}. The
view clearly shows the saddle nature of the surface. Hence, the Hill-Longini conjecture
\ref{item:cvx-K} is contradicted in its original formulation. In
Figure~\ref{fig:counter-kernel-conv}, we have represented the corresponding kernel model
when the population is split equally into three groups, \emph{i.e.}, $\mu_1 = \mu_2 =
\mu_3 = 1/3$.

\begin{figure}
  \begin{subfigure}[T]{.5\textwidth} \centering
    \includegraphics[page=1]{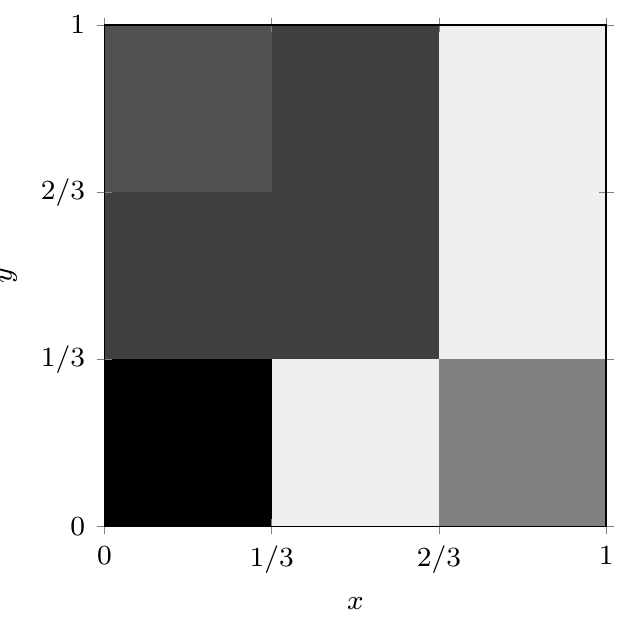}
    \caption{Grayplot of the kernel corresponding to the next-generation
    matrix \eqref{eq:next-gen-counter-conv} when the sub-populations have same size.}
    \label{fig:counter-kernel-conv} 
  \end{subfigure}%
  \begin{subfigure}[T]{.5\textwidth} \centering
    \includegraphics[page=3]{counter-examples}
    \caption{The plan of strategies $\eta$ with cost
    $\costu(\eta)=2/3$ is represented as a gray surface. The triangulated surface corresponds to
  the graph of $R_e$ restricted to this set.}
    \label{fig:counter-graph-conv}
  \end{subfigure}%
  \caption{Counter-example of the Hill-Longini conjecture (convex
    case).}
  \label{fig:counter-conv}
\end{figure}

\medskip

In the same manner, the eigenvalues of the following next-generation matrix:
\begin{equation}\label{eq:next-gen-counter-conc}
  K = 
  \begin{pmatrix}
    9 & 13 & 14 \\
    18 & 6 & 5 \\
    1 & 6 & 6
  \end{pmatrix}
\end{equation}
are approximately equal to $26.3$, $-1.4$ and $-3.9$. Thus, $K$ satisfies the condition
that should imply the concavity of the effective reproduction number in the Hill-Longini
conjecture~\ref{item:cave-K}. However, as we can see in
Figure~\ref{fig:counter-graph-conc}, the function $R_e$ is neither convex nor concave. In
Figure~\ref{fig:counter-kernel-conc}, we have represented the corresponding kernel model
when the population is splitted equally into three groups, \emph{i.e.}, $\mu_1 = \mu_2 =
\mu_3 = 1/3$.

\begin{figure}
  \begin{subfigure}[T]{.5\textwidth} \centering
    \includegraphics[page=2]{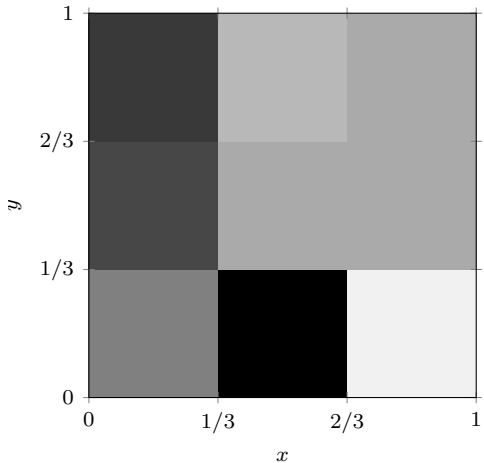}
    \caption{Grayplot of the kernel corresponding to the next-generation
    matrix \eqref{eq:next-gen-counter-conc} when the sub-populations have same size.}
  \label{fig:counter-kernel-conc}
  \end{subfigure}%
  \begin{subfigure}[T]{.5\textwidth} \centering
    \includegraphics[page=4]{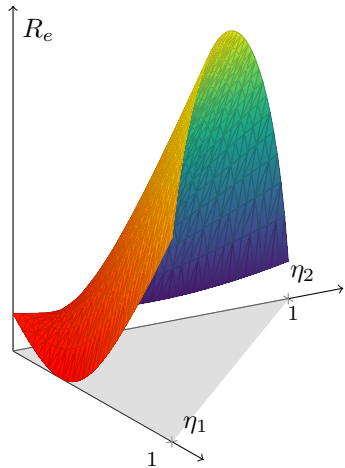}
    \caption{The plan of strategies $\eta$ with cost
    $\costu(\eta)=2/3$ is represented as a gray surface. The triangulated surface corresponds to
  the graph of $R_e$ restricted to this set.}
  \label{fig:counter-graph-conc}
  \end{subfigure}%
  \caption{Counter-example of the Hill-Longini conjecture (concave case).}
  \label{fig:counter-conc}
\end{figure}

\medskip

Despite these counter-examples, the Hill-Longini conjecture is indeed true when making
further assumption on the next-generation matrix. Let~$M$ be a square real matrix. The
matrix~$M$ is \emph{diagonally similar} to a matrix~$M'$ if there exists a non singular
real diagonal matrix~$D$ such that~$M=D\cdot M' \cdot D^{-1}$. The matrix~$M$ is said to
be \emph{diagonally symmetrizable} or simply \emph{symmetrizable} if it is diagonally
similar to a symmetric matrix, or, equivalently, if $M$ admits a decomposition~$M=D \cdot
S$ (or~$M=S\cdot D$), where~$D$ is a diagonal matrix with positive diagonal entries
and~$S$ is a symmetric matrix. If a matrix $M$ is diagonally symmetrizable, then its
eigenvalues are real since similar matrices share the same spectrum. We obtained the
following result when the next-generation matrix is symmetrizable.

\begin{theorem}\label{th:hill-longini-meta}
  Suppose the non-negative matrix $K$ is diagonally symmetrizable. 
  \begin{propenum}
  \item\label{pt:conv-meta} If $\spec(K) \subset \R_+$, then the
    function $R_e[K]$ is convex.
  \item If $R_0$ is a simple eigenvalue of $K$ and
    $\spec(K) \subset \R_-\cup \{ R_0 \} $, then the function
    $R_e[K]$ is concave.
  \end{propenum}
\end{theorem}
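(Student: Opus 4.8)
The plan is to reduce to a symmetric next-generation matrix and then treat the two cases by different convex-analytic representations of $R_e$: a supremum of affine functions for convexity, and a convex super-level set for concavity.

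\emph{Reduction to the symmetric case.} Pick positive $d_1,\dots,d_N$ with $d_iK_{ij}=d_jK_{ji}$ and set $S_{ij}=d_i^{1/2}K_{ij}d_j^{-1/2}$; then $S$ is symmetric, non-negative, and $\spec(S)=\spec(K)$. Since $\diag(\eta)$ commutes with the diagonal conjugation, $K\diag(\eta)=\diag(d)^{1/2}\,S\diag(\eta)\,\diag(d)^{-1/2}$ is similar to $S\diag(\eta)$, so $R_e[K]=R_e[S]$ (the matrix instance of Lemma~\ref{lem:hk/h}\ref{lem:k=hk/h}). We may thus assume $K=S$ symmetric non-negative with $\lambda_1=R_0\ge\lambda_2\ge\cdots\ge\lambda_N$. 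As $\diag(\eta)^{1/2}S\diag(\eta)^{1/2}$ is symmetric and entrywise non-negative, Perron–Frobenius together with the similarity $S\diag(\eta)\sim\diag(\eta)^{1/2}S\diag(\eta)^{1/2}$ (for $\eta>0$) gives
\[
  R_e(\eta)=\rho(S\diag(\eta))=\lambda_{\max}\big(\diag(\eta)^{1/2}S\diag(\eta)^{1/2}\big),
\]
the top eigenvalue of a symmetric matrix. By continuity of $R_e$ (Proposition~\ref{prop:R_e}) and positive homogeneity it suffices to argue on the open cone $\{\eta>0\}$.

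\emph{Convex case.} Here $S\succeq0$, so $S=B^2$ with $B=S^{1/2}\succeq0$ symmetric. Using $\rho(XY)=\rho(YX)$ (Lemma~\ref{lem:prop-spec-mult}\ref{item:spec-adjoint-mult}) with $X=B$, $Y=B\diag(\eta)$, and that $B\diag(\eta)B\succeq0$,
\[
  R_e(\eta)=\rho\big(B\diag(\eta)B\big)=\max_{\norm{x}=1}\ \langle Bx,\diag(\eta)Bx\rangle=\max_{\norm{x}=1}\ \sum_{i}\eta_i(Bx)_i^2 .
\]
For fixed $x$ the inner expression is affine (indeed linear) in $\eta$, so $R_e$ is a supremum of affine functions, hence convex.

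\emph{Concave case.} Now $\lambda_1=R_0>0$ is simple and $\lambda_k\le0$ for $k\ge2$. Let $v>0$ be the unit Perron eigenvector and write $S=\lambda_1 vv^\top-M$ with $M=\sum_{k\ge2}(-\lambda_k)v_kv_k^\top\succeq0$. For $\eta>0$ put $a=\diag(\eta)^{1/2}v$ and $\tilde M=\diag(\eta)^{1/2}M\diag(\eta)^{1/2}\succeq0$, so that $A(\eta):=\diag(\eta)^{1/2}S\diag(\eta)^{1/2}=\lambda_1 aa^\top-\tilde M$. Being a rank-one positive-semidefinite matrix minus a positive-semidefinite one, $A(\eta)$ has at most one positive eigenvalue; by the matrix-determinant lemma, for $t>0$ the number $t$ is an eigenvalue iff $h_\eta(t):=\lambda_1 a^\top(\tilde M+tI)^{-1}a=1$, and $h_\eta$ is strictly decreasing on $(0,\infty)$. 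Hence $R_e(\eta)=\lambda_{\max}(A(\eta))$ is the unique positive root of $h_\eta=1$, and since $h_\eta$ is decreasing, $R_e(\eta)\ge1\iff h_\eta(1)\ge1$. Factoring $\tilde M+I=\diag(\eta)^{1/2}\big(M+\diag(\eta)^{-1}\big)\diag(\eta)^{1/2}$ yields $h_\eta(1)=\lambda_1 v^\top\big(M+\diag(\eta)^{-1}\big)^{-1}v$, so the super-level set is
\[
  \{\eta>0:R_e(\eta)\ge1\}=\Big\{\eta>0:\ \lambda_1\, v^\top\big(M+\diag(\eta)^{-1}\big)^{-1}v\ge1\Big\}.
\]
The crux is that this set is convex. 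By the identity $w^\top W^{-1}w=\max_z(2w^\top z-z^\top Wz)$ for $W\succ0$, membership is equivalent to the existence of $z$ with $2\lambda_1 v^\top z-\lambda_1 z^\top Mz-\lambda_1\sum_i z_i^2/\eta_i\ge1$; thus $\{R_e\ge1\}$ is the projection onto $\eta$ of the set $\big\{(\eta,z):\eta>0,\ 2\lambda_1 v^\top z-\lambda_1 z^\top Mz-\lambda_1\sum_i z_i^2/\eta_i\ge1\big\}$, which is convex because $2\lambda_1 v^\top z$ is linear, $-\lambda_1 z^\top Mz$ is concave ($M\succeq0$), and each $-z_i^2/\eta_i$ is concave since $(z_i,\eta_i)\mapsto z_i^2/\eta_i$ is jointly convex on $\{\eta_i>0\}$; projections of convex sets are convex. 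Finally $R_e$ is non-negative, positively homogeneous of degree $1$, and has a convex super-level set $\{R_e\ge1\}$ on $\{\eta>0\}$; rescaling any two points to the level $\{R_e=1\}$ and using homogeneity gives $R_e(\theta x+(1-\theta)y)\ge\theta R_e(x)+(1-\theta)R_e(y)$, i.e.\ concavity on $\{\eta>0\}$, which extends to $\Delta$ by continuity.

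\emph{Main obstacle.} The whole difficulty lies in the concave case. The naive representations — Rayleigh/Collatz–Wielandt maxima, or encoding concavity through the (hypo/epi)graph — fail because they couple the eigenvalue $t$ with $\eta$ through the term $t/\eta_i$, which is neither convex nor concave. The idea that unlocks the argument is to freeze the level $t=1$ and characterize $\{R_e\ge1\}$ rather than $R_e$ itself: once $t$ is fixed the obstructing term becomes the jointly convex quadratic-over-linear $z_i^2/\eta_i$, and convexity of the super-level set (hence concavity, via homogeneity) follows. Establishing that $A(\eta)$ has a single positive eigenvalue — which is exactly where the hypotheses that $R_0$ is simple and that all other eigenvalues are non-positive enter — is what makes the implicit equation $h_\eta(t)=1$ cleanly identify $R_e(\eta)$.
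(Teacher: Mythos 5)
Your reduction to a symmetric matrix and your treatment of the convex case coincide with the paper's argument (which writes the symmetrized operator as $Q^2$ and represents $\rho(QM_\eta Q)$ as a supremum of linear forms via Courant--Fischer). The concave case is where you genuinely diverge. The paper proves concavity by analytic perturbation theory: along a segment $\eta_\alpha=(1-\alpha)\eta_0+\alpha\eta_1$ it uses Sylvester's inertia theorem to show that $T_{\kk\eta_\alpha}$ keeps exactly one positive eigenvalue, expands $R(\alpha+\varepsilon)$ to second order, and reads off $R''(\alpha)\le 0$ from the signs of the non-principal eigenvalues. You instead freeze the level: writing $S=\lambda_1vv^\top-M$ with $M\succeq0$, you identify $\{R_e\ge1\}$ with $\{\eta>0:\ \lambda_1v^\top(M+\diag(\eta)^{-1})^{-1}v\ge1\}$ via the matrix-determinant lemma, prove this set is convex using the variational identity for $w^\top W^{-1}w$ together with the joint convexity of $z_i^2/\eta_i$, and upgrade quasi-concavity to concavity by homogeneity. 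Both routes are correct; yours is more elementary (no perturbation theory, no inertia theorem) but is tied to finite dimension through the determinant computation, whereas the paper's proof is built to pass to the kernel/operator setting of Theorem~\ref{th:hill-longini}, of which the matrix statement is a corollary. Two small points to tidy in your write-up: strict positivity of the Perron vector $v$ would require irreducibility and is not available here, but you never actually use it --- only $v\neq0$, hence $a=\diag(\eta)^{1/2}v\neq0$ for $\eta>0$, which is what makes $h_\eta$ strictly decreasing; and you should record that $R_e(\eta)>0$ on $\{\eta>0\}$ (by monotonicity and homogeneity, $R_e(\eta)\ge(\min_i\eta_i)R_0$), since this is what guarantees that $h_\eta=1$ has a positive root equal to $\lambda_{\max}(A(\eta))$, and it also disposes of the degenerate case $R_0=0$ in part (ii).
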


This result is a particular case of Theorem~\ref{th:hill-longini} below. The first
point~\ref{pt:conv-meta} has been proved by Cairns in \cite{EpidemicsInHeCairns1989}. In
\cite{Fried80}, Friedland obtained that, if the next-generation matrix $K$ is not singular
and if its inverse is an M-matrix (\emph{i.e.}, its non-diagonal coefficients are
non-positive), then $R_e$ is convex. Friedland's condition does not imply that $K$ is
symmetrizable nor that $\spec(K) \subset \R_+$. On the other hand, the following matrix is
symmetric definite positive (and thus $R_e$ is convex) but its inverse is not an M-matrix.
\begin{equation*}
  K = 
  \begin{pmatrix}
    3 & 2 & 0 \\
    2 & 2 & 1 \\
    0 & 1 & 4
  \end{pmatrix}
  \qquad \text{with inverse} \quad
  K^{-1} = 
  \begin{pmatrix}
    1.4 & -1.6 & 0.4 \\
    -1.6 & 2.4 & -0.6 \\
    0.4 & - 0.6 & 0.4 
  \end{pmatrix}.
\end{equation*}
Thus Friedland's condition and Property~\ref{pt:conv-meta} in
Theorem~\ref{th:hill-longini-meta} are not comparable. Note that if~$K$ is symmetrizable
and its inverse is an M-matrix, then the eigenvalues of $K$ are actually non-negative
thanks to \cite[Chapter~6~Theorem~2.3]{NonnegativeMatBerman1994} and one can apply
Theorem~\ref{th:hill-longini-meta}~\ref{pt:conv-meta}.

\subsection{Generalization for the kernel model}

In this section, we give the analogue of Theorem~\ref{th:hill-longini-meta} for kernels
instead of matrices. First, we proceed with some definitions.

We say that a kernel~$\kk'$ is an Hilbert-Schmidt non-negative symmetric kernel if
$\kk'\geq 0$, $\norm{\kk'}_{2,2}<+\infty$ and $\mu(\rd x)\otimes \mu(\rd y)$-a.e.
$\kk'(x,y)=\kk'(y,x)$. By analogy with the matrix case and following \cite[Example~A,
p252]{zaanen_linear}, we introduce the notion of symmetrizability in the context of
kernels.

\begin{definition}[Diagonally HS kernel]\label{def:symmetrizable}
  A kernel~$\kk$ on~$\Omega$ is diagonally HS if there exists an Hilbert-Schmidt symmetric
  non-negative kernel~$\kk'$ on~$\Omega$ and two positive measurable functions~$f,g$
  defined on~$\Omega$ such that~$ \kk=f \kk' g$ a.s., that is $\mu(\rd x)\otimes \mu(\rd
  y)$-a.s.:
  \begin{equation}
    \label{eq:rep:k=fkg}
    \kk(x,y)=f(x)\, \kk'(x,y)\, g(y).
  \end{equation}
  If furthermore $f$ and $g$ are bounded and bounded away from $0$, then we say that the
  kernel~$\kk$ is strongly diagonally HS. 
\end{definition}

The notion of diagonally HS kernel appears naturally when considering the SIS model on
graphons; see \cite[Example~1.3]{delmas_infinite-dimensional_2020}, where the kernel $\kk$
is written as $\kk= \beta W \theta$, where $\beta(x)$ represents the susceptibility and
$\theta(x)$ the infectiousness of the individuals with feature $x$, and $W$ models the
graph of the contacts within the population with the quantity $W(x, y)=W(y,x) \in [0, 1]$
representing the density of contacts between individuals with features $x$ and $y$.

\begin{remark}\label{rem:stronglyHS}
  We complete the notion of diagonally HS kernel with three comments. 
  \begin{propenum}
  \item In finite dimension (\emph{i.e.} $\Omega$ finite), a diagonally HS kernel is
    strongly diagonally HS.
  \item Notice that a strongly diagonally HS kernel has finite double norm in $L^2$. 
  \item\label{pt:non-negative-pos} Consider the decomposition \eqref{eq:rep:k=fkg}, where
    $f$ and $g$ are assumed to be non-negative instead of positive, with the other
    assumptions unchanged. Then using Lemma~\ref{lem:hk/h}~\ref{lem:hk=kh} and assuming
    that $\kk$ in~\eqref{eq:rep:k=fkg} has a finite double norm, we get that
    $R_e[\kk]=R_e[\kk_0]$ coincide on $\Delta$, where $\kk_0=\ind{\{fg>0\}} \,\kk\,
    \ind{\{fg>0\}}$. As $\kk_0=f' \, \kk'_0 \, g'$ with $\kk'_0=\ind{\{fg>0\}} \,\kk'\,
    \ind{\{fg>0\}}$ and $h'=h+\ind{\{fg=0\}}$ for $h\in \{f, g\}$, we get that the kernel
    $\kk_0$ is diagonally HS (indeed $f'$ and $g'$ are positive, and the other assumptions
    hold). So, as far as the study of $R_e[\kk]$ is concerned, without loss of generality
    one can indeed assume that the functions $f$ and $g$ which appear in the decomposition
    of a diagonally HS kernel are positive instead of non-negative.
  \end{propenum}
\end{remark}

The following elementary lemma states that the integral operator of a diagonally HS kernel
has real eigenvalues. 

\begin{lemma}\label{lem:Tk-real}
  Let $\kk$ be a diagonally HS kernel with finite double norm. The spectrum of $T_\kk$ is
  real, that is, $\spec(T_\kk)\subset \R$. 
\end{lemma}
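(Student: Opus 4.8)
The plan is to reduce the spectral statement to a genuinely self-adjoint operator via the invariance results already proved, and then invoke the spectral theorem. Recall that $\kk$ diagonally HS means $\kk = f \kk' g$ a.s.\ with $\kk'$ a Hilbert-Schmidt symmetric non-negative kernel and $f,g$ positive; by Remark~\ref{rem:stronglyHS}~\ref{pt:non-negative-pos} I may assume $f,g>0$. The key observation is that $\spec(T_\kk)$ is governed, through the similarity transformations of Lemma~\ref{lem:hk/h}, by a symmetric kernel whose integral operator is self-adjoint on $L^2$, and self-adjoint operators have real spectrum.

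First I would combine the two transformations from Lemma~\ref{lem:hk/h}. Writing $h = \sqrt{g/f}$ (positive since $f,g>0$), the conjugation $\kk \mapsto h\kk/h$ from Lemma~\ref{lem:hk/h}~\ref{lem:k=hk/h} turns $f\kk'g$ into $(hf)\,\kk'\,(g/h) = \sqrt{fg}\,\kk'\,\sqrt{fg}$, which is a symmetric kernel since $\kk'$ is symmetric. Thus, setting $\tilde\kk = \sqrt{fg}\,\kk'\,\sqrt{fg}$, Lemma~\ref{lem:hk/h}~\ref{lem:k=hk/h} gives $\spec(T_\kk) = \spec[\kk](\un) = \spec[\tilde\kk](\un) = \spec(T_{\tilde\kk})$, provided the relevant kernels have finite double norms so the lemma applies. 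It therefore suffices to show $\spec(T_{\tilde\kk}) \subset \R$ for the symmetric kernel $\tilde\kk$. Since $\tilde\kk$ is symmetric and (by the finite double norm hypothesis together with Hilbert-Schmidt-ness of $\kk'$) defines a compact operator, the operator $T_{\tilde\kk}$ on $L^2$ is self-adjoint, and the spectrum of a self-adjoint operator is contained in $\R$, which is the desired conclusion.

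The main obstacle, and the step needing care, is integrability: Lemma~\ref{lem:hk/h}~\ref{lem:k=hk/h} requires that both $\kk$ and $h\kk/h$ have finite double norms, and I must make sure the conjugating function $h=\sqrt{g/f}$ does not destroy the double-norm finiteness. If $f,g$ are merely positive (not bounded away from $0$ and $\infty$), the symmetrized kernel $\tilde\kk$ need not a priori have finite double norm even though $\kk$ does. I would handle this exactly as in the proof of Lemma~\ref{lem:hk/h}: truncate via $f_n, g_n$ bounded away from $0$ and $\infty$, establish the real-spectrum property for the bounded/strongly diagonally HS case first (where all operators are genuinely bounded on $L^2$ and the self-adjointness argument is transparent), and then pass to the limit using the stability of the spectrum from Proposition~\ref{prop:Re-stab} or Corollary~\ref{cor:cv-kn}. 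Because $\R$ is closed, the limit of spectra contained in $\R$ remains in $\R$, so the approximation argument preserves the conclusion. The clean special case to prove first is the strongly diagonally HS one from Definition~\ref{def:symmetrizable}, where $f,g$ are bounded and bounded away from $0$, so that $h$ and $1/h$ are bounded and the similarity $M_h$ is a bounded invertible operator on $L^2$; there $T_\kk = M_h^{-1} T_{\tilde\kk} M_h$ is literally similar to a self-adjoint operator and has real spectrum with no further work.
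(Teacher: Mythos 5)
Your proposal is correct and follows essentially the same route as the paper's proof: reduce to the symmetric kernel $\sqrt{fg}\,\kk'\sqrt{fg}$ via the conjugation of Lemma~\ref{lem:hk/h}~\ref{lem:k=hk/h}, observe that the resulting compact operator is self-adjoint on $L^2$ and hence has real spectrum, and handle the unbounded $f,g$ by truncation (the paper uses the cut-off $v_n=\ind{\{n\geq f\geq 1/n\ \text{and}\ n\geq g\geq 1/n\}}$ together with the conjugating function $h=v_n\sqrt{g/f}+(1-v_n)$), passing to the limit with Corollary~\ref{cor:cv-kn} and the closedness of $\R$ for the Hausdorff distance. The only point to be slightly careful about, which the paper's choice of truncation makes automatic, is that the truncated kernels be dominated by $\kk$ so that the double-norm convergence follows from monotone (or dominated) convergence.
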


\begin{proof}
  Let $\kk'$, $f$ and $g$ as in \eqref{eq:rep:k=fkg} and for $n\in \N^*$ set:
  \begin{equation}
    \label{eq:def-vn} v_n=\ind{\{n\geq f \geq 1/n \, \text{ and }\, n\geq g\geq 1/n\}}.
  \end{equation}
  Let $p\in (1, +\infty )$ be such that $\norm{\kk}_{p,q}$ is finite. By monotone
  convergence, we have $\lim_{n\rightarrow \infty } \norm{\kk - fv_n \kk' v_n g}_{p,q}=0$.
  We deduce that:
  \[ \spec(T_\kk)
    = \spec(T_{f\kk ' g})
    =\lim_{n\rightarrow \infty } \spec(T_{fv_n\kk'v_n g})
    =\lim_{n\rightarrow \infty } \spec(T_{\sqrt{fg}\, v_n\kk'v_n \sqrt{fg}}),
  \]
  where we used \eqref{eq:rep:k=fkg} for the first equality, Corollary~\ref{cor:cv-kn} for
  the second, Lemma~\ref{lem:hk/h}~\ref{lem:k=hk/h} with $h=v_n\, \sqrt{g/f} + (1-v_n)$ for
  the last. Since the kernel $\sqrt{fg}\, v_n\kk'v_n \sqrt{fg}$ is symmetric with finite
  double norm in $L^2$, we deduce that the associated compact integral operator is
  self-adjoint, and thus $\spec(T_{\sqrt{fg}\, v_n\kk'v_n \sqrt{fg}}) \subset \R$. Then,
  use that $\R$ is closed for the Hausdorff distance to deduce that $\spec(T_\kk) \subset
  \R$.
\end{proof}

For a compact operator $T$, we denote by $\pos(T)$ and $\nega(T)$ the number of
its positive and negative eigenvalues with their multiplicity:
\[
  \pos(T)= \sum_{\lambda>0} \mult(\lambda,T)
  \quad\text{and}\quad
  \nega(T)= \sum_{\lambda<0} \mult(\lambda,T). 
\]
Note that $R_0[\kk]>0$ implies that $\pos(T_\kk)\geq 1$. 

The following result is the analogue of Theorem~\ref{th:hill-longini-meta} for the kernel
model.

\begin{theorem} [Convexity/Concavity of $R_e$]\label{th:hill-longini}
 Let $\kk$ be a
strongly diagonally HS kernel. We consider the function $R_e=R_e[\kk]$ defined on
  $\Delta$.
\begin{propenum}
   \item\label{item:hill-longini-cvx} If $\nega(T_\kk)=0$, then the function $R_e$
     is convex.
   \item\label{item:hill-longini-cave} If
     $\pos(T_\kk)=1$, 
     then the function $R_e$ is concave.
   \end{propenum}
 \end{theorem}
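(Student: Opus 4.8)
The plan is to reduce to a self-adjoint problem and then read off both statements from a single second-order perturbation formula. First I would exploit the symmetrization available for diagonally HS kernels. Writing $\kk=f\kk'g$ as in~\eqref{eq:rep:k=fkg} with $f,g$ bounded and bounded away from $0$, apply Lemma~\ref{lem:hk/h}\ref{lem:k=hk/h} with $h=\sqrt{g/f}$: the conjugated kernel $h\kk/h=\sqrt{fg}\,\kk'\,\sqrt{fg}$ is symmetric and Hilbert--Schmidt, has the same function $R_e$ on $\Delta$, and by Remark~\ref{rem:hk/h-mult} the same multiplicities on $\C^*$, hence the same $\pos$ and $\nega$. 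So I may assume $\kk$ is symmetric, i.e. $T_\kk$ self-adjoint and compact on $L^2$. Next, for $\eta\in\Delta$ I would pass to a self-adjoint effective operator: since $M_\eta=M_{\sqrt\eta}^2$ and $T_\kk M_{\sqrt\eta}$ is compact, \eqref{eq:r(AB)-mult} gives $\spec(T_{\kk\eta})=\spec(M_{\sqrt\eta}T_\kk M_{\sqrt\eta})=\spec(T_{\sqrt\eta\,\kk\,\sqrt\eta})$, and $T_{\sqrt\eta\,\kk\,\sqrt\eta}$ is self-adjoint. As $T_{\kk\eta}$ is a positive operator, $R_e(\eta)=\rho$ equals the largest eigenvalue $\lambda_1(\eta)\geq 0$ of this self-adjoint operator. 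A Sylvester/inertia argument, using $\langle T_{\sqrt\eta\kk\sqrt\eta}\phi,\phi\rangle=\langle T_\kk(\sqrt\eta\phi),\sqrt\eta\phi\rangle$, shows the number of positive eigenvalues cannot increase under $\eta$, so $\pos(T_\kk)=1$ forces at most one positive eigenvalue for every $\eta$.

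For the convex case I would argue directly. When $\nega(T_\kk)=0$ the operator $T_\kk$ is positive semidefinite; writing $T_\kk=(T_\kk^{1/2})^2$ and using \eqref{eq:r(AB)-mult} once more,
\[
  R_e(\eta)=\lambda_1\big(T_\kk^{1/2}M_\eta T_\kk^{1/2}\big)=\sup_{\norm{\psi}_2=1}\ \int_\Omega \eta\,\big(T_\kk^{1/2}\psi\big)^2\,\rd\mu.
\]
For each fixed $\psi$ the integrand is linear in $\eta$, so $R_e$ is a supremum of affine functions and hence convex. This reproves Cairns' case cleanly, but its Rayleigh/supremum nature gives the \emph{wrong} sign for concavity, which is why the second part needs a different tool.

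For the concave case I would use second-order perturbation theory for the simple top eigenvalue. Reducing to finite dimension (or applying analytic perturbation theory directly to the compact self-adjoint operator on the region where $\eta$ is bounded below), the object is the matrix $A(\eta)=K\diag(\eta)$, which is affine in $\eta$. At a point with $\eta>0$ and $\lambda_1$ simple, $\lambda_1(\eta)$ is smooth, and with the orthonormal eigenpairs $(\lambda_k,\phi_k)$ of the symmetric similar matrix $S_\eta=\diag(\sqrt\eta)\,K\,\diag(\sqrt\eta)$ one computes, for any direction $h$,
\[
  \frac{\rd^2}{\rd t^2}\Big|_{t=0} R_e(\eta+th)=2\sum_{k\geq 2}\frac{\lambda_1\,\lambda_k\,\big\langle \phi_1,\diag(h/\eta)\,\phi_k\big\rangle^2}{\lambda_1-\lambda_k}.
\]
Since $\lambda_1=R_e(\eta)>0$ is the largest eigenvalue, every denominator $\lambda_1-\lambda_k$ is positive, so the sign of the Hessian is governed entirely by the signs of $\lambda_k$, $k\geq 2$. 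If $\pos(T_\kk)=1$ then $\lambda_k\leq 0$ for $k\geq 2$ and every summand is $\leq 0$: the Hessian is negative semidefinite and $R_e$ is concave. The very same identity with $\nega(T_\kk)=0$ (all $\lambda_k\geq 0$) gives a nonnegative Hessian, re-proving convexity. This displayed formula is the conceptual heart: the hypothesis on $\spec(T_\kk)$ is exactly what fixes the sign.

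The main obstacle is upgrading this pointwise Hessian computation to a global, fully rigorous statement. I would need to control: (i) simplicity of the top eigenvalue, which holds when $\kk\eta$ is monatomic (Lemma~\ref{lem:R0simple}) but can fail on the boundary or when the effective population disconnects, so eigenvalue crossings and non-differentiability points must be absorbed using continuity of $R_e$ (Proposition~\ref{prop:R_e}); (ii) the boundary $\{\eta_i=0\}$, where $\diag(h/\eta)$ is singular, handled by proving concavity on $\{\eta>0\}$ and extending by continuity, and where convergence of the series above also requires $\eta$ bounded below; and (iii) the passage from the finite-dimensional (or truncated) model back to the infinite-dimensional compact operator, for which Corollary~\ref{cor:cv-kn} and Proposition~\ref{prop:Re-stab} supply the needed spectral stability. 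A convenient way to package all of this is to prove concavity along each segment $t\mapsto R_e(\eta_0+t(\eta_1-\eta_0))$ and patch across the finitely many crossing points using continuity together with the one-sided second-derivative sign.
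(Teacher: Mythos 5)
Your proposal is correct and follows essentially the same route as the paper: reduction to a symmetric Hilbert--Schmidt kernel by conjugation (which for a \emph{strongly} diagonally HS kernel is a one-step application of Lemma~\ref{lem:hk/h}~\ref{lem:k=hk/h} and Remark~\ref{rem:hk/h-mult} with $h=\sqrt{g/f}$, whereas the paper routes through the approximation argument of Lemma~\ref{lem:p>p} so as to also cover the non-strong case in Proposition~\ref{prop:hill-longini}), then the square-root/Rayleigh-quotient representation of $R_e$ as a supremum of linear functionals for convexity, and the second-order perturbation formula~\eqref{eq:R2} on the dense convex set of strategies bounded away from zero for concavity. The technical caveats you list (simplicity of the top eigenvalue via monatomicity, extension from $\{\eta \text{ bounded below}\}$ by continuity, spectral stability) are exactly the ones the paper handles in Sections~\ref{sec:concave} and~\ref{sec:proof-HL}.
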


In the case of diagonally HS kernels, we have the following partial
result.

\begin{proposition}\label{prop:hill-longini}
  Let $\kk$ be a diagonally HS kernel of finite double norm, with the HS kernel $\kk'$
  from \eqref{eq:rep:k=fkg}. We consider the function $R_e=R_e[\kk]$ defined on $\Delta$.
  \begin{propenum}
  \item\label{item:prop-hill-longini-cvx} If $\nega(T_{\kk'})=0$, then $\nega(T_\kk)=0$
    and the function $R_e$ is convex.
  \item\label{item:prop-hill-longini-cave} If $\pos(T_{\kk'})=1$, then $\pos(T_\kk)=1$
    and the function $R_e$ is concave.
  \end{propenum}
\end{proposition}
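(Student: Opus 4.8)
The plan is to reduce everything to the strongly diagonally HS case already settled by Theorem~\ref{th:hill-longini}, by symmetrizing and truncating exactly as in the proof of Lemma~\ref{lem:Tk-real}. With $\kk=f\kk'g$ as in \eqref{eq:rep:k=fkg}, recall the truncation $v_n=\ind{\{1/n\le f\le n,\ 1/n\le g\le n\}}$ from \eqref{eq:def-vn} and set $\kk_n=f v_n\,\kk'\,v_n g$ and $\tilde\kk_n=\sqrt{fg}\,v_n\,\kk'\,v_n\sqrt{fg}$. The kernel $\tilde\kk_n$ is non-negative, symmetric and Hilbert--Schmidt (since $\sqrt{fg}\,v_n\le n$), hence strongly diagonally HS (take $f=g=1$), and Lemma~\ref{lem:hk/h}~\ref{lem:k=hk/h} applied with $h=v_n\sqrt{g/f}+(1-v_n)$ gives $R_e[\kk_n]=R_e[\tilde\kk_n]$ on $\Delta$. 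Since $\kk_n\uparrow\kk$ and $\norm{\kk-\kk_n}_{p,q}\to 0$ by monotone convergence, Proposition~\ref{prop:Re-stab} yields $R_e[\tilde\kk_n]=R_e[\kk_n]\to R_e[\kk]$ uniformly on $\Delta$. As convexity and concavity are preserved under pointwise limits, it suffices to check that $\nega(T_{\tilde\kk_n})=0$ (resp.\ $\pos(T_{\tilde\kk_n})=1$) for $n$ large and to apply Theorem~\ref{th:hill-longini}~\ref{item:hill-longini-cvx} (resp.\ \ref{item:hill-longini-cave}) to each $\tilde\kk_n$; the assertions on $\nega(T_\kk)$ and $\pos(T_\kk)$ will then follow by a separate limiting argument.

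The core is therefore a transfer of the eigenvalue sign-counts from $\kk'$ to $\tilde\kk_n$. Writing $m_n=\sqrt{fg}\,v_n$, which is bounded, one has $T_{\tilde\kk_n}=M_{m_n}T_{\kk'}M_{m_n}$, a congruence of the compact self-adjoint operator $T_{\kk'}$. For the convex case I would argue directly: $\nega(T_{\kk'})=0$ means $T_{\kk'}\ge 0$, so $T_{\tilde\kk_n}=(T_{\kk'}^{1/2}M_{m_n})^\top(T_{\kk'}^{1/2}M_{m_n})\ge 0$ and hence $\nega(T_{\tilde\kk_n})=0$. For the general bound I would use the min--max description of $\pos,\nega$: if $W$ is a subspace with $\langle T_{\tilde\kk_n}u,u\rangle>0$ for all $u\in W\setminus\{0\}$, then $m_n u\ne 0$ on $W$ (otherwise the form would vanish), so $u\mapsto m_n u$ is injective on $W$ and $\langle T_{\kk'}v,v\rangle>0$ on $m_n W$; this gives $\pos(T_{\tilde\kk_n})\le \pos(T_{\kk'})$, and symmetrically $\nega(T_{\tilde\kk_n})\le\nega(T_{\kk'})$.

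For the concave case the congruence bound gives $\pos(T_{\tilde\kk_n})\le 1$, and I would establish the matching lower bound $\pos(T_{\tilde\kk_n})\ge 1$ for $n$ large by testing against a top eigenfunction $\varphi$ of $T_{\kk'}$ (eigenvalue $\lambda_1>0$): taking $u_n=\varphi v_n/m_n\in L^2$ gives $\langle T_{\tilde\kk_n}u_n,u_n\rangle=\langle T_{\kk'}\varphi v_n,\varphi v_n\rangle\to\lambda_1\norm{\varphi}_2^2>0$, so the quadratic form is positive on a line once $n$ is large. Thus $\pos(T_{\tilde\kk_n})=1$ and Theorem~\ref{th:hill-longini}~\ref{item:hill-longini-cave} applies. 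Finally, to obtain $\nega(T_\kk)=0$ (resp.\ $\pos(T_\kk)=1$) I would pass to the limit: since $\kk_n$ and $\tilde\kk_n$ share their nonzero spectrum and multiplicities (Lemma~\ref{lem:hk/h}~\ref{lem:k=hk/h} and Remark~\ref{rem:hk/h-mult}), every nonzero eigenvalue of $T_\kk$ is, by the multiplicity convergence \eqref{eq:cv-mult-vp} of Corollary~\ref{cor:cv-kn}, accounted for by eigenvalues of $T_{\tilde\kk_n}$ of the same sign for $n$ large; hence $\nega(T_\kk)=0$ in the first case and $\pos(T_\kk)\le 1$ in the second, while $\pos(T_\kk)\ge 1$ follows from $\rho(T_\kk)\ge\rho(T_{\kk_{n_0}})=\rho(T_{\tilde\kk_{n_0}})>0$ via the monotonicity of the spectral radius (Lemma~\ref{lem:prop-spec-mult}~\ref{item:A-B}) together with the fact that $R_0[\kk]>0$ forces $\pos(T_\kk)\ge 1$.

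The main obstacle is that the one-step symmetrization $\tilde\kk=\sqrt{fg}\,\kk'\,\sqrt{fg}$ need not be Hilbert--Schmidt, because $\sqrt{fg}$ may be unbounded or degenerate; this is exactly why one must work with the truncated kernels $\tilde\kk_n$ and control the sign-counts through both the congruence by $M_{m_n}$ and the spectral limit $n\to\infty$. The delicate bookkeeping is to keep the \emph{algebraic} multiplicities (which define $\pos$ and $\nega$ for the non-self-adjoint operators $T_\kk$ and $T_{\kk_n}$) consistent with the \emph{geometric} counts produced by the min--max argument for the self-adjoint $T_{\tilde\kk_n}$; this is reconciled precisely by the equality of nonzero spectra and multiplicities between $T_{\kk_n}$ and $T_{\tilde\kk_n}$ and by the multiplicity convergence in Corollary~\ref{cor:cv-kn}.
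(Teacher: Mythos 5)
Your argument is correct and follows the same overall strategy as the paper's: truncate and symmetrize $\kk=f\kk'g$ into $\tilde\kk_n=\sqrt{fg}\,v_n\,\kk'\,v_n\sqrt{fg}$, settle the symmetric case, and pass to the limit via Proposition~\ref{prop:Re-stab} and Corollary~\ref{cor:cv-kn}. Two points deserve comment. First, as written you invoke Theorem~\ref{th:hill-longini} for the kernels $\tilde\kk_n$, but in the paper that theorem is \emph{deduced from} Proposition~\ref{prop:hill-longini} together with Lemma~\ref{lem:p>p}, so the literal citation would be circular; the repair is immediate, since each $\tilde\kk_n$ is a genuine non-negative symmetric Hilbert--Schmidt kernel and you only need the independently proven symmetric case of Sections~\ref{sec:convexe} and~\ref{sec:concave}. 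Second, your treatment of the inertia transfer genuinely differs from the paper's and is arguably leaner: the paper introduces the two-parameter multipliers $w_{n,m}=\sqrt{fg}\,v_n+m^{-1}(1-v_n)$, which are bounded and bounded away from $0$ precisely so that the full Sylvester inertia Theorem~\ref{lem:sylvester} applies, and then removes $m$ by a second limiting step using \eqref{eq:cv-mult-vp}; you instead work directly with the degenerate congruence $T_{\tilde\kk_n}=M_{m_n}T_{\kk'}M_{m_n}$, get the one-sided bounds $\pos(T_{\tilde\kk_n})\le \pos(T_{\kk'})$ and $\nega(T_{\tilde\kk_n})\le \nega(T_{\kk'})$ by the min--max characterization, and recover the needed lower bound $\pos(T_{\tilde\kk_n})\ge 1$ with an explicit test vector. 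This trades one limit for one test-function computation, and both are sound. Likewise, your derivation of $\pos(T_\kk)=1$ (multiplicity convergence plus monotonicity of the spectral radius) replaces the paper's shorter observation that $\{\kk>0\}=\{\kk'>0\}$ forces $R_0[\kk]>0$; again both work.
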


The proof for HS kernels is given in Section \ref{sec:convexe} for the convex case and in
Section \ref{sec:concave} for the concave case; the latter relies on the Sylvester's
inertia theorem which is presented in Section~\ref{sec:sylvester}. The extension to
(strongly) diagonally HS kernel follows from Sections~\ref{sec:proof-HL}.
 
\begin{remark}[Concavity and monatomicity]\label{rem:hill-longini}
  We assume $R_0>0$ with $R_0=R_0[\kk]$.
  \begin{propenum}
  \item If $R_e[\kk]$ is concave, then $\kk$ is monatomic, see Lemma~\ref{lem:conc-mono}.
  \item If $\kk$ is a strongly diagonally HS kernel, then the condition $\pos(T_\kk)=1$ in
    Theorem~\ref{th:hill-longini}~\ref{item:hill-longini-cave} implies that $R_0$ is
    simple and $\spec(T_\kk) \subset \R_-\cup\{R_0\}$, and thus $\kk$ is monatomic, see
    Lemma~\ref{lem:R0simple}.
  \item More generally, using the decomposition of a reducible kernel from Lemma
    \ref{lem:Rei}, we get that if $\spec(T_\kk) \subset \R_-\cup\{R_0\}$ and $\kk$ is a
    strongly diagonally HS kernel, then the function $R_e$ is the maximum of $m=\mult(R_0,
    T_\kk)$ concave functions which are non-zero on $m$ pairwise disjoint subsets of
    $\Delta$.
  \end{propenum}
\end{remark}

\begin{remark}
   It is unclear whether or not $\pos(T_\kk)=1$ (resp. $\nega(T_\kk)=0$) in
   Proposition~\ref{prop:hill-longini} implies that $\pos(T_{\kk'})=1$
   (resp. $\nega(T_{\kk'})=0$). 
\end{remark}

\begin{remark}\label{rem:config}
  A \emph{configuration model} corresponds in finite dimension to the next generation
  matrix having rank one, this is the so-called \emph{proportionate mixing} model in the
  metapopulation literature; see Cairns \cite{EpidemicsInHeCairns1989} for optimal
  vaccinations strategies in this setting.

  Motivated by the finite dimensional case, we say that a kernel $\kk$ is a
  \emph{configuration} kernel if there exist $p \in (1, +\infty)$, $f \in L^p$ and $g \in
  L^q$ where $q = p/(p-1)$ such that $\kk(x,y) = f(x) g(y)$, $\mu\otimes\mu$-almost
  surely. We also suppose that $\mu(fg > 0) > 0$. Such a kernel has finite double norm, as
  $\norm{\kk}_{p,q}=\norm{f}_p \norm{g}_q$. Following
  Remark~\ref{rem:stronglyHS}~\ref{pt:non-negative-pos}, we have $R_e[\kk]=R_e[\ind{\{fg >
  0\}} \,\kk \,\ind{\{fg > 0\}}]$ with $\ind{\{fg > 0\}} \,\kk \,\ind{\{fg > 0\}}$
  diagonally HS as $\ind{\{fg > 0\}} \,\kk \,\ind{\{fg > 0\}} = (f + \ind{\{f = 0\}})
  \ind{\{fg > 0\}} \ind{\{fg > 0\}} (g + \ind{\{g=0\}})$. Besides, the only eigenvalue of
  the kernel $\ind{\{fg > 0\}}(x) \ind{\{fg > 0\}}(y)$ different from $0$ is its spectral
  radius equal to $\mu(fg > 0)$ and it has multiplicity $1$. Applying
  Proposition~\ref{prop:hill-longini}, we obtain that $R_e$ is convex and concave and thus
  linear. This can be checked directly as:
  \begin{equation}
    R_e[\kk](\eta) = \int_\Omega f g\, \eta \, \mathrm{d}\mu.
  \end{equation}
  We shall provide in \cite{ddz-mono} a deeper study of configuration kernels in the
  context of epidemiology.
\end{remark}

\subsection{Sylvester's inertia theorem}
\label{sec:sylvester}

Following \cite[Section~4.1.2]{pelinovsky}, we state and provide a short proof for the
Sylvester's inertia theorem in our context; see also \cite[Theorem~4.5.8]{horn2012matrix}
in finite dimension. This result will be used to prove the concavity of $R_e$. 

\begin{theorem}[Sylvester's inertia theorem]\label{lem:sylvester}
  Let $(\Omega, \cf, \mu)$ be a probability space. Let $T'$ be a self-adjoint compact
  operator on $L^2( \mu)$, and two non-negative measurable functions~$f,g$ defined
  on~$\Omega$ which are bounded and bounded away from $0$. Set $T=M_f T' M_g$. Then, we
  have $\spec(T)\subset \R$ and:
  \begin{equation}
    \label{eq:sylvester}
    \pos(T)=\pos(T')
    \quad\text{as well as}\quad 
    \nega(T)=\nega(T').
  \end{equation}
\end{theorem}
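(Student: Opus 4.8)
The plan is to conjugate $T$ into a self-adjoint operator and then invoke the classical law of inertia for the resulting congruence. Since $f$ and $g$ are bounded and bounded away from $0$, so is $h=\sqrt{fg}$, and the multiplication operators $M_f, M_g, M_h$ are bounded, invertible and (being multiplications by real functions) self-adjoint on $L^2(\mu)$. Set $S=M_h T' M_h$. As $T'$ is self-adjoint and $M_h$ is self-adjoint, $S$ is self-adjoint, and it is compact since $T'$ is. The first step is to show that $T$ and $S$ share the same nonzero spectral data; the second is to compare the inertia of $S$ with that of $T'$.

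For the first step I would apply the cyclic invariance of multiplicities from Lemma~\ref{lem:prop-spec-mult}~\ref{item:spec-adjoint-mult}. Writing $T=(M_f T')\,M_g$ with $M_f T'$ compact and $M_g$ bounded, equation~\eqref{eq:r(AB)-mult} gives $\mult(\lambda,T)=\mult(\lambda, M_g M_f T')=\mult(\lambda, M_{fg}T')$ for every $\lambda\in\C^*$. Likewise, writing $S=(M_h T')\,M_h$ yields $\mult(\lambda,S)=\mult(\lambda, M_h M_h T')=\mult(\lambda, M_{fg}T')$ for $\lambda\in\C^*$, since $h^2=fg$. Hence $\mult(\lambda,T)=\mult(\lambda,S)$ for all $\lambda\neq 0$ and $\spec(T)\cap\C^*=\spec(S)\cap\C^*$. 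Because $S$ is self-adjoint, $\spec(S)\subset\R$, whence $\spec(T)\subset\R$; and summing the multiplicities over positive (resp.\ negative) $\lambda$ gives $\pos(T)=\pos(S)$ and $\nega(T)=\nega(S)$. It therefore remains to prove $\pos(S)=\pos(T')$ and $\nega(S)=\nega(T')$.

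For this core step I would use the variational description of the inertia of a self-adjoint compact operator $A$: by the spectral theorem, $\pos(A)$ equals $\sup\{\dim V\}$ over linear subspaces $V\subset L^2(\mu)$ on which the quadratic form $x\mapsto\langle Ax,x\rangle$ is positive definite (i.e.\ $\langle Ax,x\rangle>0$ for all $x\in V\setminus\{0\}$), with the analogous statement for $\nega(A)$ and negative definiteness. The lower bound comes from taking $V$ to be the closed span of the eigenvectors with positive eigenvalues; the upper bound follows because the orthogonal projection onto that span is injective on any positive-definite subspace, as any vector in its kernel lies in the sum of the non-positive eigenspaces, where the form is $\leq 0$. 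The crucial observation is that $M_h$ is a linear isomorphism of $L^2(\mu)$ and that, since $M_h$ is self-adjoint, $\langle Sx,x\rangle=\langle T'(M_h x),M_h x\rangle$; thus $x\mapsto M_h x$ maps the positive-definite subspaces of $S$ bijectively onto those of $T'$, preserving dimension. Taking suprema yields $\pos(S)=\pos(T')$, and the same argument with negative-definite subspaces gives $\nega(S)=\nega(T')$, completing the proof.

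The main obstacle is the careful handling of the infinite-dimensional setting, where $\pos(T')$ or $\nega(T')$ may be infinite: one must verify that the quadratic form is genuinely positive definite on the \emph{closed} span of the positive eigenvectors, and not merely on finite linear combinations, and that the projection and dimension-counting argument for the upper bound remains valid when the relevant spaces are infinite-dimensional, so that the variational characterization holds with the convention $\infty=\infty$.
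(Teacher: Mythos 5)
Your proof is correct and follows essentially the same route as the paper: both reduce $T$ to the symmetrized compact self-adjoint operator $M_{\sqrt{fg}}\,T'\,M_{\sqrt{fg}}$ via the cyclic invariance of multiplicities in \eqref{eq:r(AB)-mult}, and both then compare inertias by counting dimensions of subspaces on which the quadratic form is sign-definite, using that $M_{\sqrt{fg}}$ is a self-adjoint isomorphism. The paper phrases this last step as a direct contradiction (producing a nonzero vector in the orthogonal complement of the positive eigenspace of $T'$ on which $\langle \cdot, T'\cdot\rangle$ is positive), which is exactly the upper-bound half of your variational characterization of $\pos$ and $\nega$.
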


\begin{proof}
  Set~$h=\sqrt{f/g}$, $M=M_{\sqrt{fg}}$ and
  \[
    T''=M T' M,
  \]
  so that $ T=M_h T'' M_{1/h}$.
   Thanks to \eqref{eq:r(AB)-mult}, we get that $\mult(\lambda,
   T)=\mult(\lambda, T'')$ for all $\lambda\in \C^*$.
   So, we need to prove \eqref{eq:sylvester} with $T$ replaced by
   $T''$. We only consider the number of positive eigenvalues as the number of
  negative eigenvalues can be handled similarly. 
  \medskip
   
  We introduce some general notations. For a self-adjoint compact
  operator $S$ on $L^2(\mu)$, let $(u_i, i\in I)$, with $I$ at most
  countable and $\sharp I=\pos(S)$, be a sequence of orthogonal
  eigenvectors associated to the positive eigenvalues
  $(\lambda_i, i\in I)$ of $S$. Let $U\subset L^2(\mu)$ be the (closed)
  vector sub-space spanned by $(u_i, i\in I)$. The orthogonal complement
  of $U$, say $U^\top$ is the (closed) vector space spanned by the
  kernel of $S$ and the eigenvectors associated to the negative
  eigenvalues. We consider the quadratic form $Q_S$ on $L^2(\mu)$
  defined by:
   \[
     Q_S(u)=\langle u,S u \rangle.
   \]
   Let $P_S$ be the orthogonal projection on $U^\top$. By decomposing
   $u$ on $U\oplus U^\top$, we get:
   \[
     Q_S(u)=\sum_{i\in I} \lambda_i \langle u, u_i
   \rangle^2 +Q_S(P_S(u)),
 \]
 and the quadratic form $Q_S \circ P_S$ is negative semi-definite. 
 \medskip

 We shall now prove that $\pos(T'')=\pos(T')$ by contradiction. First assume that
 $\pos(T')<\pos(T'')$, so in particular $\pos(T')$ is finite. Let
 $(u''_i, i\in I'')$ be a sequence of orthogonal eigenvectors
 associated to the positive eigenvalues $(\lambda''_i, i\in I'')$ of
 $T''$. Set $v_i=M u''_i$ for $i\in I''$. In particular, the
 dimension of the space spanned by $(v_i, i\in I'')$, which is equal to
 $\pos(T'')$, is larger than the finite dimension of the space $U$ spanned
 by the orthogonal eigenvectors $(u'_i, i\in I')$ associated to the
 positive eigenvalues of $T'$.
 Thus, solving a linear system, we get there exists $(c_i, i\in I'')$ such
 that $c_i\neq 0$ for at most $\pos(T')+1$ indices,
 $u=\sum_{i\in I''} c_i v_i\neq 0$, and $u \in U^\top$. 
On one hand,
 since $Q_{T'}$ is negative semi-definite on $U^\top$, we get
 $Q_{T'} (u)\leq 0$. On the other hand, we have:
 \[
   Q_{T'}(u)=\langle u, T'u \rangle
   = \sum_{i, j \in I''} c_i c_j\, \langle v_i, T' v_j \rangle
   = \sum_{i, j \in I''} c_i c_j\, \langle u''_i, T'' u''_j \rangle
   = \sum_{i\in I''} c_i^2 \lambda''_i>0.
 \]
 By contradiction, we deduce that $\pos(T')\geq \pos(T'')$, and by symmetry $\pos(T')= \pos(T'')$.
 \end{proof}

\subsection{The symmetric case}\label{sec:sym}

Let $\kk$ be an Hilbert-Schmidt non-negative symmetric kernel. As $R_0[\kk]=0$ implies
$R_e[\kk]=0$ by \eqref{eq:r(A)r(B)}, we shall only consider the case $R_0[\kk]>0$. We now
prove Theorem~\ref{th:hill-longini} when $\kk$ is symmetric with finite double norm in
$L^2(\mu)$ and $R_0[\kk]>0$.

\subsubsection{The convex case}\label{sec:convexe}

The proof relies on an idea from \cite{Fried80} (see therein just before Theorem 4.3). Let
$\kk$ be an Hilbert-Schmidt non-negative symmetric kernel such that $ \spec(T_\kk)\subset
\R_+$, where $ T_\kk$ is the corresponding integral operator on $L^2(\mu)$. Since $T_\kk$
is a self-adjoint positive semi-definite operator on $L^2(\mu)$, there exists a
self-adjoint positive semi-definite operator $Q$ on $L^2(\mu)$ such that $Q^2=T$. Recall
that for a real-valued function $u$ defined on $\Omega$, $M_u$ denotes the multiplication
by $u$ operator. Thanks to \eqref{eq:r(AB)}, we have for $\eta\in \Delta$:
\[
  R_e[\kk](\eta)=\rho(T_\kk \,M_\eta)=\rho(Q^2\,
  M_\eta)= \rho ( Q \,M_\eta\, Q). 
\]
Since the self-adjoint operator $Q\, M_{\eta}\, Q$ (on $L^2(\mu)$) is also positive
semi-definite, we deduce from the Courant-Fischer-Weyl min-max principle that:
\[
  R_e[\kk](\eta)= \rho\left(Q\, M_{\eta}\, Q\right)=\sup_{u\in L^2(\mu)\setminus\set{0}}
  \frac{\langle u, Q \, M_{\eta} \,Qu \rangle}{ \langle u,u \rangle}\cdot
\]
Since the map $\eta \mapsto \langle u, Q \, M_{\eta} \,Q u \rangle$ defined on $\Delta$
is linear, we deduce that $\eta\mapsto R_e[\kk](\eta)$ is convex as a supremum of linear
functions.

\subsubsection{The concave case}\label{sec:concave}

Let $\kk$ be an Hilbert-Schmidt non-negative symmetric kernel such that $\pos(T_\kk)=1$.
In particular $\kk$ is monatomic, see Lemma~\ref{lem:R0simple}. Let $\Delta^*$ be the
subset of $\Delta$ of the functions which are bounded away from 0. The set $\Delta^*$ is a
dense convex subset of $\Delta$. So its suffice to prove that $R_e=R_e[\kk]$ is concave on
$\Delta^*$. Let $\eta_0$, $\eta_1$ be elements of $\Delta^*$, and set $\eta_\alpha =
(1-\alpha)\eta_0 + \alpha\eta_1$ for $\alpha\in [0, 1]$ (which is also an element of
$\Delta^*$). We write $T_\alpha = T_{\kk\eta_\alpha}$, so that $T_\alpha = T_0 + \alpha T_\kk M$, where $M$ is the multiplication by $(\eta_1 -
\eta_0)$ operator, and:
\[
  R(\alpha) = R_e(\eta_\alpha)=\rho(T_\alpha)=\rho(T_0 + \alpha T_\kk
  M). 
\]
So, to prove that $R_e$ is concave on $\Delta^*$ (and thus on $\Delta)$, it is enough to
prove that $\alpha\mapsto R(\alpha)$ is concave on $(0, 1)$. As $\eta_\alpha$ is also
bounded away from 0, we get that $\kk \eta_\alpha$ is monatomic and its spectral radius
$R(\alpha)$ is positive and a simple eigenvalue, thanks to Lemma~\ref{lem:R0simple}.
Thanks to Sylvester's inertia theorem, see Theorem~\ref{lem:sylvester} (with $f=1$ and
$g=\eta_\alpha$), we also get that $\pos(T_\alpha)=1$. \medskip

We consider the following scalar product on $L^2(\mu)$ defined by $\braket{u,v}_\alpha =
\braket{u,\eta_\alpha v}$. The operator $T_\alpha$ is self-adjoint and compact on
$L^2(\eta_\alpha \rd \mu)$ with spectrum $\spec(T_\alpha)$ thanks to
Lemma~\ref{lem:prop-spec-mult}~\ref{item:density-mult}. Let $(\lambda_n, n\in I=\lb 0, N
\lb)$, with $N\in \N \cup\{\infty \}$ be an enumeration of the non-zero eigenvalues of
$T_\alpha$ with their multiplicity so that $\lambda_0=R(\alpha)>0$ and thus $\lambda_n<0$
for $n\in I^*=I\setminus \{0\}$; and denote by $(u_n, n\in I)$ a corresponding sequence of
orthogonal eigenvectors. The functions $v_\alpha=u_0$ and $\phi_\alpha=\eta_\alpha u_0$
are the right and left-eigenvectors for $T_\alpha$ (seen as an operator on $L^2(\mu)$)
associated to $R(\alpha)$.

\medskip

We now follow \cite{EffectivePertuBenoit} to get that
$\alpha \mapsto R(\alpha)=\rho(T_0 + \alpha T_\kk M)$ is analytic and
compute its second derivative. Let $\pi_\alpha$ be the projection on
the ($\braket{\cdot,\cdot}_\alpha$)-orthogonal of $v_\alpha$, and
define:
\[
  S_\alpha = (T_\alpha - R(\alpha))^{-1} \pi_\alpha.
\]
In other words, $S_\alpha$ maps $u_0$ to $0$ and $u_i$ to
$(\lambda_i - R(\alpha))^{-1}\, u_i$.
Let $\alpha\in(0,1)$ and $\varepsilon$ small enough so that
$\alpha+\varepsilon\in [0, 1]$. We have:
\[
    T_{\alpha+\varepsilon} = T_\alpha + \varepsilon T_\kk M,
  \]
  and thus $\norm{T_{\alpha+\varepsilon}
      -T_\alpha}_{L^2(\eta_\alpha \rm{d} \mu)}=O( \varepsilon)$.
    Using \cite[Theorem~2.6]{EffectivePertuBenoit} on the Banach space
    $L^2(\eta_\alpha\, \mathrm{d}\mu)$, we get that:
   \[
      R(\alpha + \varepsilon)
      = R(\alpha) + \varepsilon\braket{v_\alpha,T_\kk M v_\alpha}_\alpha
  - \varepsilon^2\braket{v_\alpha, T_\kk M S_\alpha T_\kk M v_\alpha}_\alpha
  + O(\varepsilon^3).
\]
Let $N_\alpha=M_{1/\eta_\alpha}M=MM_{1/\eta_\alpha}$ be the
multiplication by $(\eta_1-\eta_0)/\eta_\alpha$ bounded operator. 
Since $\alpha \mapsto R(\alpha)$ is analytic and $T_\kk$ self-adjoint
(with respect to $\langle \cdot, \cdot \rangle$), we get that:
\begin{align*}
    R''(\alpha)
    &= - 2 \braket{v_\alpha,T_\kk M S_\alpha T_\kk M v_\alpha}_\alpha \\
    &= - 2 \braket{M T_\alpha v_\alpha, S_\alpha T_\kk M v_\alpha} \\
    &= - 2 R(\alpha) \braket{ M v_\alpha, S_\alpha T_\kk M v_\alpha}\\
    &= - 2 R(\alpha) \braket{ N_\alpha v_\alpha, S_\alpha T_\alpha N_\alpha v_\alpha}_\alpha.
  \end{align*}
  Since the kernel and the image of $T_\alpha$ are orthogonal (in
  $L^2(\eta_\alpha \rd \mu)$), and the latter is generated by
  $(u_n, n\in I)$, we have the decomposition
  $N_\alpha v_\alpha = g+ \sum_{n\in I} a_nu_n$ with
  $g\in \mathrm{Ker}(T_\alpha)$ and
  $a_n=\langle N_\alpha v_\alpha, u_n \rangle_\alpha$. This gives, with
  $I^*=I\setminus \{0\}$:
\begin{equation}\label{eq:R2}
    R''(\alpha) = 2 R(\alpha) \sum_{n\in I^*} \frac{\lambda_n}{R(\alpha) - \lambda_n} \, a_n^2\, \braket{u_n,u_n}_\alpha. 
\end{equation}
Since $\lambda_n<0$ for all $n\in I^*$, we deduce that $R''(\alpha)\leq
0$ and thus $\alpha \mapsto R(\alpha)$ is 
concave on $[0, 1]$. This implies that $R_e[\kk]$ is concave. 

\begin{remark}\label{rem:cvxe2}
  The same proof with obvious changes gives that if $\kk$ is an
  Hilbert-Schmidt non-negative symmetric monatomic (and thus
  quasi-irreducible) kernel such that $\nega(T_\kk)=0$, then $R_e[\kk]$
  is convex on $\Delta$. This result is however less general than the
  one obtained in Section~\ref{sec:convexe}. 
\end{remark}

\subsection{Proof of Theorem \ref{th:hill-longini} and Proposition \ref{prop:hill-longini}}
\label{sec:proof-HL}

We first consider the following technical Lemma.
\begin{lemma}
  \label{lem:p>p}
   Let $\kk$ be a
 diagonally HS kernel, with the HS kernel $\kk'$ from
 \eqref{eq:rep:k=fkg}. We have:
 \[
   \pos(T_\kk)\leq \pos(T_{\kk'})
   \quad\text{and}\quad
   \nega(T_\kk)\leq \nega(T_{\kk'}).
 \]
 If furthermore $\kk$ is strongly diagonally HS, then the previous
 inequalities are in fact equalities. 
\end{lemma}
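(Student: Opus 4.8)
The plan is to obtain the equality in the strongly diagonally HS case as a direct application of Sylvester's inertia theorem, and to derive the two inequalities in the general case by truncating $f$ and $g$ into a strongly symmetrizable regime and then passing to the limit by a semicontinuity argument.

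I would dispose of the strong case first, since there nothing has to be approximated. When $\kk$ is strongly diagonally HS, the functions $f,g$ in the decomposition $\kk=f\kk' g$ of \eqref{eq:rep:k=fkg} are bounded and bounded away from $0$, and $T_\kk=M_f T_{\kk'} M_g$ with $T_{\kk'}$ self-adjoint and compact on $L^2(\mu)$ (as $\kk'$ is Hilbert-Schmidt symmetric non-negative). Theorem~\ref{lem:sylvester} then applies verbatim and yields $\pos(T_\kk)=\pos(T_{\kk'})$ and $\nega(T_\kk)=\nega(T_{\kk'})$, which is exactly the asserted equality.

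For the general inequality I would set up an approximation by symmetrizable kernels using the cut-off $v_n$ of \eqref{eq:def-vn}. Put $\kk_n=f v_n\kk' v_n g$; then $\norm{\kk_n}_{p,q}\le\norm{\kk}_{p,q}$ and, by dominated convergence exactly as in the proof of Lemma~\ref{lem:Tk-real}, $\lim_n\norm{\kk-\kk_n}_{p,q}=0$. Applying Remark~\ref{rem:hk/h-mult} with $h=v_n\sqrt{g/f}+(1-v_n)$ (positive, and with $h\kk_n/h$ of finite double norm in $L^2$) gives $\mult(\lambda,T_{\kk_n})=\mult(\lambda,T_{\tilde\kk_n})$ for all $\lambda\in\C^*$, where $\tilde\kk_n=a\kk' a$ with $a=\sqrt{fg}\,v_n$ is symmetric Hilbert-Schmidt; in particular $\pos(T_{\kk_n})=\pos(T_{\tilde\kk_n})$ and $\nega(T_{\kk_n})=\nega(T_{\tilde\kk_n})$. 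Now $T_{\tilde\kk_n}=M_a T_{\kk'} M_a$ with $M_a$ bounded and self-adjoint on $L^2(\mu)$, so the Courant-Fischer-Weyl min-max principle gives $\pos(M_a T_{\kk'} M_a)\le\pos(T_{\kk'})$ (and likewise for $\nega$): any subspace on which the quadratic form of $M_a T_{\kk'} M_a$ is positive is mapped injectively by $M_a$ onto a subspace of the same dimension on which the quadratic form of $T_{\kk'}$ is positive. Hence $\pos(T_{\kk_n})\le\pos(T_{\kk'})$ and $\nega(T_{\kk_n})\le\nega(T_{\kk'})$ for every $n$.

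It then remains to let $n\to\infty$, and this lower-semicontinuity step is where the real care is required. From $\norm{\kk-\kk_n}_{p,q}\to 0$ and Corollary~\ref{cor:cv-kn} I would show $\pos(T_\kk)\le\liminf_n\pos(T_{\kk_n})$ as follows: for any finite $k\le\pos(T_\kk)$, collect the finitely many distinct positive eigenvalues of $T_\kk$ lying in some $[\delta,+\infty)$ whose multiplicities sum to at least $k$, isolate them by pairwise disjoint balls of radius $r<\delta$, and use the multiplicity identity \eqref{eq:cv-mult-vp} to conclude that for $n$ large each such eigenvalue is approximated, with total multiplicity, by eigenvalues of $T_{\kk_n}$ lying in $(0,+\infty)$; thus $\pos(T_{\kk_n})\ge k$ eventually. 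Combined with the previous paragraph this gives $\pos(T_\kk)\le\liminf_n\pos(T_{\kk_n})\le\pos(T_{\kk'})$, and symmetrically for $\nega$. The main obstacle is precisely that, away from the strong case, $f$ and $g$ may be unbounded or tend to $0$, so Sylvester's theorem no longer produces an equality: truncating breaks the symmetrizing conjugation (forcing the detour through Remark~\ref{rem:hk/h-mult} together with the min-max compression, which can only decrease the inertia) and one must transfer eigenvalue multiplicities to the limit while ruling out positive eigenvalues drifting down to $0$. Throughout, the fact that the nonzero spectrum and its multiplicities are independent of the realization space $L^p$ (Lemma~\ref{lem:prop-spec-mult}~\ref{item:density-mult}) makes $\pos$ and $\nega$ unambiguous when we pass between $L^2$ and $L^p$.
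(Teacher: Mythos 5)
Your proof is correct, and it follows the same skeleton as the paper's: truncate $f,g$ by the cut-off $v_n$ of \eqref{eq:def-vn}, transfer multiplicities to the symmetrized kernel $\sqrt{fg}\,v_n\kk'v_n\sqrt{fg}$ via Remark~\ref{rem:hk/h-mult}, bound its inertia by that of $\kk'$, and pass to the limit in $n$ using \eqref{eq:cv-mult-vp} together with the realness of the spectra (Lemma~\ref{lem:Tk-real}) to prevent positive eigenvalues from escaping to $0$ or off the real axis. The one step where you genuinely diverge is the comparison $\pos(T_{\sqrt{fg}v_n\kk'v_n\sqrt{fg}})\leq \pos(T_{\kk'})$: the paper cannot apply Sylvester's Theorem~\ref{lem:sylvester} directly because $a=\sqrt{fg}\,v_n$ vanishes on $\{v_n=0\}$, so it perturbs the weight to $w_{n,m}=\sqrt{fg}\,v_n+m^{-1}(1-v_n)$, gets a two-sided equality from Sylvester, and then takes a second limit $m\to\infty$ through Corollary~\ref{cor:cv-kn}. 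You instead prove the one-sided inequality $\pos(M_aT_{\kk'}M_a)\leq\pos(T_{\kk'})$ directly for a bounded real $a$, by noting that $M_a$ maps any subspace on which the quadratic form of $M_aT_{\kk'}M_a$ is positive definite injectively onto one where the form of $T_{\kk'}$ is positive definite; this is exactly the compression half of the paper's own proof of Sylvester's theorem, applied without the invertibility of the weight. Your variant is slightly more economical (it removes the auxiliary parameter $m$ and one spectral-continuity limit), at the price of not reusing Theorem~\ref{lem:sylvester} as a black box; your treatment of the strongly diagonally HS case by a direct appeal to Sylvester is also cleaner than the paper's observation that $v_n\equiv 1$ for large $n$. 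Both routes are sound.
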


\begin{proof}
  We only consider the number of positive eigenvalues as the number of negative
  eigenvalues can be handled similarly. Let $f,g$ be the functions from
  \eqref{eq:rep:k=fkg} and $v_n$ defined in \eqref{eq:def-vn} for $n\in \N^*$. For
  simplicity, we write $\pos(\kk'')$ for $\pos(T_{\kk''})$ when $\kk''$ is a kernel with
  finite double norm. Let $m\in \N^*$. As the function $w_{n,m}=\sqrt{fg}\, v_n + m^{-1}
  (1-v_n)$ is bounded and bounded away from 0, we deduce from the Sylvester's inertia
  Theorem~\ref{lem:sylvester} that:
  \begin{equation}
    \label{eq:pk-nm}
    \pos(\kk')=\pos\left(w_{n,m}\, \kk'\,w_{n,m} \right). 
  \end{equation}
  Notice that $\lim_{m\rightarrow \infty } \norm{ \sqrt{fg}\, v_n\, \kk' \,v_n \sqrt{fg}
  \, - w_{n,m}\, \kk'\,w_{n,m}}_{2,2}=0$. Letting $m$ goes to infinity, we deduce from
  \eqref{eq:cv-mult-vp} in Corollary~\ref{cor:cv-kn} and the fact that the spectrum is
  real that:
  \begin{equation}
    \label{eq:p>p-1}
    \pos(\kk')\geq \pos\left(\sqrt{fg}\, v_n\, \kk' \,v_n \sqrt{fg} \right).
  \end{equation}
  We also deduce from Remark~\ref{rem:hk/h-mult}, with $h=\sqrt{f/g}\, v_n + (1-v_n)$
  that:
  \[
    \pos\left(\sqrt{fg}\, v_n \, \kk'\, v_n \sqrt{fg} \right)
    =\pos\left(f v_n\, \kk'\, v_n g \right).
  \]
  Recall $\kk$ has a finite double norm in some $L^p$. By monotone convergence, we get
  that $\lim_{m\rightarrow \infty } \norm{ f\, \kk'\, g - f\, v_n\, \kk' \,v_n g
  }_{p,q}=0$. Letting $n$ goes to infinity, we also deduce from \eqref{eq:cv-mult-vp} in
  Corollary~\ref{cor:cv-kn} and the fact that the spectra of $T_{fv_n\,\kk' \, v_n g}$ and
  $T_{f\, \kk'\, g}$ are real according to Lemma~\ref{lem:Tk-real}, that:
  \begin{equation}
    \label{eq:p>p-2} \liminf_{n\rightarrow \infty }p\left(f v_n\, \kk'\, v_n g \right)
    \geq p\left(f \, \kk'\,g \right). 
  \end{equation}
  Thus, we have $\pos(\kk')\geq \pos(\kk)$. \medskip

  Notice that if $\kk$ is strongly diagonally HS, then $v_n=1$ for $n$ large enough, so
  that inequalities \eqref{eq:p>p-1} and \eqref{eq:p>p-2} are in fact equalities and thus
  $\pos(\kk')=\pos(\kk)$. 
\end{proof}

\begin{proof}[Proof of Proposition \ref{prop:hill-longini}]
  We only prove \ref{item:prop-hill-longini-cave} as the proof of
  \ref{item:prop-hill-longini-cvx} is similar and easier for the last part. We keep
  notations from the proof of Lemma~\ref{lem:p>p}. Assume that $\pos(\kk')=1$. We deduce
  from \eqref{eq:pk-nm} and from Section~\ref{sec:concave} that $R_e[w_{n,m} \, \kk'\,
  w_{n,m} ]$ is concave. We deduce from Corollary~\ref{cor:cv-kn}, letting $m$ goes to
  infinity, that $R_e[\sqrt{fg}\, v_n \, \kk'\, v_n \sqrt{fg}]$ is concave. Use
  Lemma~\ref{lem:hk/h}~\ref{lem:k=hk/h} with $h=\sqrt{f/g}\, v_n + (1-v_n)$ to obtain that
  $R_e[f v_n\, \kk'\, v_n g]$ is concave. Then, letting $n$ goes to infinity and using
  again Corollary~\ref{cor:cv-kn}, we deduce that $R_e[f\kk' g]=R_e[\kk]$ is concave. 
  \medskip

  Use also Lemma~\ref{lem:p>p} to get $\pos(\kk)\leq \pos(\kk')$. Now if $\pos(\kk)=0$,
  then we have that $R_0[\kk]=0$ which is equivalent to $R_0[\ind{\{\kk>0\}}]=0$. Since
  $\{\kk>0\}=\{\kk'>0\}$, this is also equivalent to $R_0[\kk']=0$. As this is ruled out
  because $\pos(\kk')=1$, we deduce that $\pos(\kk)=1$. 
\end{proof} 

\begin{proof}[Proof of Theorem \ref{th:hill-longini}]
  The result is an immediate consequence of Proposition
  \ref{prop:hill-longini} and the second part of Lemma~\ref{lem:p>p}.
\end{proof}

\section{Three properties of the Pareto and anti-Pareto frontiers}
\label{sec:3-P-et-AP}

We introduce in Section~\ref{sec:P-et-AP} the bi-objective minimization problem, where
one tries to minimize simultaneously the cost of the vaccination and the effective 
reproduction number, and recall results from~\cite{ddz-theo} on the Pareto and
anti-Pareto optimal strategies and frontiers. Then, we derive in Section~\ref{sec:ray}
the existence of Pareto optimal rays as soon as there exists a Pareto optimal
strategy uniformly strictly bounded from above by $1$. We prove in
Section~\ref{sec:cordon} that creating a cordon sanitaire is not the worst idea in the
sense that it is not anti-Pareto optimal (and it can be Pareto optimal or not). 
Eventually, in Section~\ref{sec:independance} we give a characterization of
$\cmir=\Cinf(0)$ using the notion of independent set from graph theory.

\subsection{Pareto and anti-Pareto frontiers}\label{sec:P-et-AP}

We quantify the cost of the vaccination strategy $\eta\in \Delta$ by a
function $C: \Delta \rightarrow \R^+$, and we assume that $C(\un)=0$
(doing nothing costs nothing), $C$ is non-increasing (doing more costs
more) and continuous for the weak topology
 on $\Delta$ defined in
Section~\ref{sec:weak-topo}. Recall that $1-\eta$ represents the proportion of
the population which has been vaccinated when using the strategy
$\eta$. One natural choice is the uniform cost
function $C=\costu$
defined for $\eta\in \Delta$ by:
\begin{equation}
  \label{eq:def-C}
  \costu(\eta)=\int_\Omega (1-\eta)\, \rd \mu.
\end{equation}

In \cite{ddz-theo}, we formalized and study the problem of optimal allocation strategies
for a perfect vaccine. This question may be viewed as a bi-objective minimization
problem, where one tries to minimize simultaneously the cost of the vaccination and the
effective reproduction number: 
\begin{equation}
   \label{eq:bi-min}
  \min_{\Delta} (C,R_e).
\end{equation}
\medskip

We briefly summarize the results from \cite{ddz-theo}. We shall assume
that the kernel $\kk$ has a finite double norm, the loss function is
given by the effective reproduction function $R_e[\kk]$, and the cost
function $C$ is furthermore \emph{decreasing} (this is the case of the uniform cost), that
is, for any $\eta_1, \eta_2\in \Delta$:
\[
  \eta_1\leq \eta_2 \quad\text{and}\quad
  \int_\Omega \eta_1 \, \mathrm{d} \mu < \int_\Omega \eta_2 \, \mathrm{d} \mu
\,\Longrightarrow \, C(\eta_1)> C(\eta_2).
\]
\medskip

To be precise, the next results can be found in \cite[Propositions~5.4
and~5.5]{ddz-theo} (notice in particular, that Assumptions 4 and 5
holds thanks to Lemma~5.13 therein). 
By definition, we have $R_0=\max_\Delta \, R_e$ and we set
$ \cmax=\max_\Delta C$ which is positive as $C$ is decreasing. 
Related to the minimization problem~\eqref{eq:bi-min}, we shall
consider~$\mir$ the \emph{optimal loss} function and~$\Cinf$ the
\emph{optimal cost} function defined by:
\begin{align*}
\mir (c) &= \min \, \set{ \loss(\eta) \, \colon \, \eta \in \Delta,\,
  C(\eta) \leq c }\quad\text{for $c\in [0,\cmax]$},\\
  \Cinf(\ell) &= \min \, \set{ C(\eta) \, \colon \, \eta \in \Delta,\,
    \loss(\eta) \leq \ell } \quad \text{for $\ell \in [0,R_0]$}.
\end{align*}
We have $\Cinf(R_0)=0$ and $\mir(0)=R_0$ since $C$ is decreasing.
For convenience, we write $\cmir$ for the minimal cost required
to completely stop the transmission of the disease:
\begin{equation}
  \label{eq:def-cmir}
 \cmir=\Cinf(0) = \inf \{ c \in [0,\cmax] \, \colon \, \mir(c) = 0 \}.
\end{equation}
The function $\mir$ is continuous, decreasing on $[0, \cmir]$ and zero on $[\cmir,
1]$; the function $\Cinf$ is continuous and decreasing on $[0, R_0]$;
and the functions $\mir$ and $\Cinf$ are the inverse of each other, that is,
$\mir \circ \Cinf(\ell)=\ell$ for $\ell\in [0, R_0]$ and 
$\Cinf \circ \mir( c)=c$ for $c\in [0, \cmir]$. 
\medskip

We define the Pareto optimal strategies $\cp$ as the ``best'' solutions of the
minimization problem~\eqref{eq:bi-min} (we refer to \cite{ddz-theo} for a precise
justification of this terminology):
\[
  \cp=\left\{\eta\in \Delta\, \colon\, C(\eta)=\Cinf(R_e(\eta))
    \quad\text{and}\quad R_e(\eta)=\mir(C(\eta))
  \right\},
\]
and the Pareto frontier as their outcomes:
\[
  \F=\left\{(C(\eta), R_e(\eta))\, \colon\, \eta\in \cp \right\}.
\]
The set $\cp$ is a non empty compact (for the weak topology) in $\Delta$ and furthermore
the Pareto frontier can be easily represented using the graph of the optimal loss function
or cost function:
\[
  \F
  = \{(\Cinf(\ell), \ell) \, \colon \, \ell \in [0,R_0]\}
  = \{(c, \mir(c)) \, \colon \, c \in [0,\cmir]\}.
\]
\medskip

It is also of interest to consider the ``worst'' strategies
which can be viewed as solutions to the bi-objective maximization
problem: 
\begin{equation}
   \label{eq:bi-max}
  \max_{\Delta} (C,R_e). 
\end{equation}
To be precise, the next results can be found in \cite[Propositions~5.8
and~5.9]{ddz-theo} (notice in particular that Assumption 6 holds in
general but that Assumption 7 holds under the stronger condition
that the kernel $\kk$ is monatomic, see Section~5.4.2 therein).
Related to the maximization problem~\eqref{eq:bi-max}, we shall
consider~$\mar$ the \emph{optimal loss} function and~$\Csup$ the
\emph{optimal cost} function defined by:
\begin{align*}
\mar (c) &= \max \, \set{ \loss(\eta) \, \colon \, \eta \in \Delta,\,
  C(\eta) \geq c }\quad\text{for $c\in [0,\cmax]$},\\
  \Csup(\ell) &= \max \, \set{ C(\eta) \, \colon \, \eta \in \Delta,\,
    \loss(\eta) \geq \ell } \quad \text{for $\ell \in [0,R_0]$}.
\end{align*}
We have $\Csup(0)=\cmax$ and $\mar(\cmax)=0$ since $C$ is decreasing and
$C(\zero)=\cmax$. Since, for $\varepsilon\in (0, 1)$ we have
$C(\varepsilon \un )<\cmax$ as $C$ is decreasing and
$R_e(\varepsilon \un)=\varepsilon R_0>0$, we deduce that
$\Csup(0+)=\cmax$. For convenience, we write $\cmar$ for the maximal
cost of totally inefficient strategies:
\begin{equation}
    \label{eq:def-cmar}
  \cmar = \Csup(R_0)=\max \{ c \in [0,\cmax] \, \colon \, \mar(c) = R_0 \}.
\end{equation}
The function $\Csup$ is decreasing on $[0, R_0]$;
the function $\mar$ is constant equal to $R_0$ on $[0, \cmar]$; we have 
$\mar \circ \Csup(\ell)=\ell$ for $\ell\in [0, R_0]$.
  This latter
property implies that the function $\mar$ is continuous. 

We define the anti-Pareto optimal strategies $\cpa$ as the ``worst''
strategies, that is solutions of
the maximization problem~\eqref{eq:bi-max}:
\[
  \cpa=\left\{\eta\in \Delta\, \colon\, C(\eta)=\Csup(R_e(\eta))
    \quad\text{and}\quad R_e(\eta)=\mar(C(\eta)) \right\},
\]
and the anti-Pareto frontier as their outcomes:
\[
  \AF=\left\{(C(\eta), R_e(\eta))\, \colon\, \eta\in \cpa \right\}.
\]
The set $\cp$ is non empty 
and furthermore the Pareto frontier can be easily represented using the
graph of the optimal cost function:
\begin{equation}
   \label{eq:FL=L*}
  \AF
   = \{(\Csup(\ell), \ell) \, \colon \, \ell \in [0,R_0]\}.
\end{equation}
We also have that the feasible region or set of possible outcomes for $(C, R_e)$:
\[
  \FF=\left\{(C(\eta), R_e(\eta))\, \colon \, \eta\in \Delta \right\}
\]
is compact,
  path connected, and its complement is connected in $\R^2$. It is the whole region
  between the graphs of the one-dimensional value functions:
  \begin{align*}
    \FF &= \{ (c,\ell) \in [0, \cmax]\times [0, R_0]\,\colon\, 
    \mir(c) \leq \ell \leq \mar(c) \} \\
	&= \{ (c,\ell) \in [0, \cmax]\times [0, R_0] \,\colon\, 
	\Cinf(\ell) \leq c \leq \Csup(\ell)\}.
  \end{align*}
\medskip

If furthermore $\kk$ is monatomic with atom $\oa$, then thanks to
\cite[Lemma~5.13]{ddz-theo}, we have $ \cmar=C(\ind{\oa} )$ (which is 0
if $\kk$ is irreducible); the function $\mar$ is continuous, decreasing
on $[ \cmar, \cmax]$; the function $\Csup$ is continuous and decreasing
on $[0, R_0]$; the functions $\mar$ and $\Csup$ are the inverse of each
other, that is, $\mar \circ \Csup(\ell)=\ell$ for $\ell\in [0, R_0]$ and
$\Csup \circ \mar( c)=c$ for $c\in [ \cmar, \cmax]$; and the set $\cpa$
is compact and
$ \AF = \{(c, \mar(c)) \, \colon \, c \in [\cmar, \cmax]\}$. \medskip

\begin{figure}[t]
  \begin{subfigure}[T]{.5\textwidth}
    \centering
    \includegraphics[page=1]{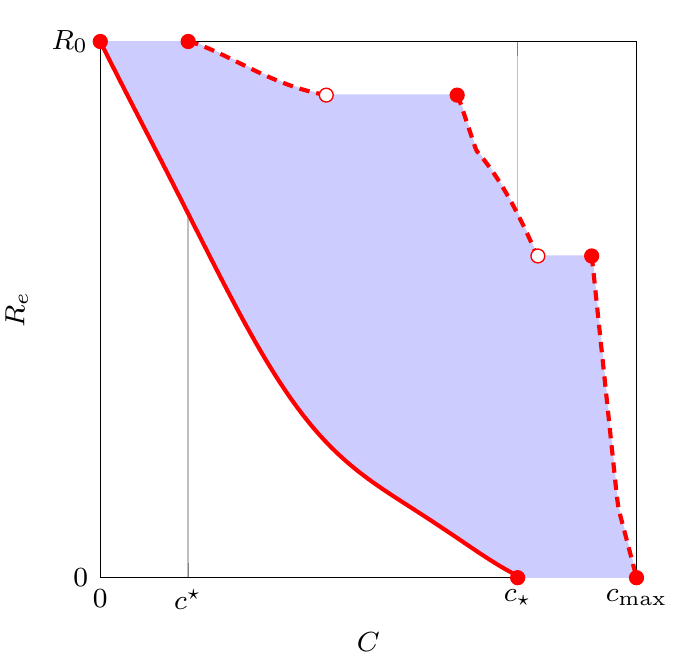}
    \caption{General kernel.}
    \label{fig:generic_frontiers-gen}
  \end{subfigure}%
  \begin{subfigure}[T]{.5\textwidth}
    \centering
    \includegraphics[page=2]{generic-frontier}
    \caption{Monatomic kernel.}
  \end{subfigure}

  \begin{subfigure}[T]{.5\textwidth}
    \centering
    \includegraphics[page=3]{generic-frontier}
    \caption{Irreducible kernel.}
    \label{fig:generic_frontiers-irr}
  \end{subfigure}%
  \begin{subfigure}[T]{.5\textwidth}
    \centering
    \includegraphics[page=4]{generic-frontier}
    \caption{Kernel strictly positive almost surely.}
  \end{subfigure}
  \caption{Generic aspect of the feasible region (light blue), the
    Pareto frontier (thick red line) and the anti Pareto frontiers
    (dashed red line) for the cost function~$R_e[\kk]$, with kernel
    $\kk$, and a continuous decreasing cost function $C$.
    }
    \label{fig:generic_frontiers}
\end{figure}

We plotted in Figure~\ref{fig:generic_frontiers} the typical Pareto and anti-Pareto
frontiers for a general kernel (notice the anti-Pareto frontier is not connected \textit{a
priori}), a monatomic kernel (notice the anti-Pareto frontier is connected), and a
positive kernel. In the latter case, the properties of the frontiers are stated in 
the next lemma.

\begin{lem}\label{lem:k>0-c}
  Suppose that the cost function $C$ is continuous decreasing with $C(\un)=0$ and
  consider the loss function $R_e[\kk]$, with $\kk$ a finite double norm kernel such 
  that a.s. $\kk>0$. Then, we have $R_0[\kk]>0$, $\cmar=0$, $\cmir=\cmax$ and the
  strategy $\un$ (resp. $\zero$) is the only Pareto optimal as well as the only 
  anti-Pareto optimal strategy with cost $c=0$ (resp. $c=1$).
\end{lem}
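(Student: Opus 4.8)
The plan is to prove the assertions in the order $R_0[\kk]>0$, $\cmar=0$, $\cmir=\cmax$, and then the extremal strategies, the whole argument resting on the fact that a positive kernel leaves no room for a nontrivial strategy to be either totally inefficient or totally efficient.

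First I would use that $\kk>0$ a.s.\ makes $\kk$ irreducible, so that $R_0[\kk]>0$ by \cite[Theorem~V.6.6]{schaefer_banach_1974}, and that $\kk$ is moreover monatomic with atom $\oa=\Omega$ (Remark~\ref{rem:quasi-irr}). Then $\cmar=0$ is immediate from the monatomic-case identity $\cmar=C(\ind{\oa})$ recalled in Section~\ref{sec:P-et-AP}, since $\ind{\oa}=\un$ and $C(\un)=0$.

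The substantive step is $\cmir=\cmax$, which I would obtain by showing that the only strategy with $R_e(\eta)=0$ is $\eta=\zero$. Assume $\eta\neq\zero$ and put $A=\{\eta>0\}$, so $\mu(A)>0$. On $A\times A$ the kernel $\kk\eta$ is positive a.s.\ (since $\kk>0$ a.s.\ and $\eta>0$ on $A$), hence $\ind{A}\,\kk\eta\,\ind{A}$ restricted to $A$ is irreducible with finite double norm, and its reproduction number is positive. As the kernel $\kk\eta-\ind{A}\,\kk\eta\,\ind{A}$ is non-negative, Lemma~\ref{lem:prop-spec-mult}~\ref{item:A-B} yields $R_e(\eta)=\rho(T_{\kk\eta})\geq\rho(T_{\ind{A}\,\kk\eta\,\ind{A}})>0$. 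Hence $R_e(\eta)=0$ forces $\eta=\zero$, and $\cmir=\Cinf(0)=C(\zero)=\cmax$.

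Finally I would handle the extremal costs. Since $C$ is decreasing, $C(\eta)=0=C(\un)$ together with $\eta\leq\un$ forces $\int_\Omega\eta\,\rd\mu=1$, i.e.\ $\eta=\un$; symmetrically $C(\eta)=\cmax=C(\zero)$ forces $\eta=\zero$. Thus $\un$ (resp.\ $\zero$) is the unique strategy of cost $0$ (resp.\ $\cmax$), and it only remains to check it is both Pareto and anti-Pareto optimal, which is bookkeeping with the frontier identities of Section~\ref{sec:P-et-AP}: for $\un$ one uses $\Cinf(R_0)=0$, $\mir(0)=R_0$, $\Csup(R_0)=\cmar=0$ and $\mar(0)=R_0$; for $\zero$ one uses $\Cinf(0)=\cmir=\cmax$, $\mir(\cmax)=0$, $\Csup(0)=\cmax$ and $\mar(\cmax)=0$. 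The only genuinely delicate point, and the one place where the positivity of $\kk$ is essential, is the strict inequality $R_e(\eta)>0$ for $\eta\neq\zero$ in the previous paragraph; the rest follows from the monotonicity of $C$ and the already-established shape of the frontiers.
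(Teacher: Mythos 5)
Your proof is correct and takes essentially the same route as the paper's: both arguments reduce everything to showing $R_e(\eta)>0$ for $\eta\neq\zero$ via the irreducibility of $\kk\eta$ restricted to $\{\eta>0\}$ (the paper phrases this as $\kk\eta$ being monatomic and invokes Lemma~\ref{lem:R0simple}, where you instead pass through the domination inequality of Lemma~\ref{lem:prop-spec-mult}~\ref{item:A-B}, an immaterial difference). The remaining steps --- $\cmar=C(\ind{\oa})=C(\un)=0$ for the irreducible kernel, and the bookkeeping with the frontier identities of Section~\ref{sec:P-et-AP} --- match the paper's argument, which you simply spell out in more detail.
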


\begin{proof}
  Since $\kk>0$, we get that $\kk$ is irreducible (and thus monatomic) and $R_0>0$, thanks to
  Lemma~\ref{lem:R0simple}. We get that $\cmar=0$. This implies
  that the strategy $\un$ is anti-Pareto optimal. As $C$ is decreasing,
  we also get that the strategy $\un $ is Pareto optimal.

  Let $\eta\in \Delta$ be different from $\zero$. We get that
  the kernel $\kk \eta$ restricted to the set of positive $\mu$-measure
  $\{\eta>0\}$ is positive, thus
  $\kk \eta$ is monatomic (with $\oa=\{\eta>0\}$ and
  $\oi=\oa^c$). Thanks Lemma~\ref{lem:R0simple}, we get
  that $R_e(\eta)>0$. This readily implies that $\cmir=\cmax$ and that
  the strategy $\zero$ is Pareto optimal. As $C$ is decreasing, we also
  get that the strategy $\zero $ is anti-Pareto optimal.
\end{proof}

\subsection{Optimal ray}\label{sec:ray}

As the loss function $R_e$ is convex and homogeneous, and if the cost function is affine,
then the set $\mathcal{P}$ of Pareto optimal strategies may contains a non-trivial optimal
ray $\{\lambda \eta\, \colon\, \lambda\in [0, 1]\}$. This optimal ray has already been
observed in finite dimension, see \cite{poghotanyan_constrained_2018}.

\begin{proposition}[Optimal ray]\label{prop:critical-ray}
  Suppose that the cost function $C$ is continuous decreasing and affine and that the loss
  function $R_e[\kk]$, with $\kk$ a finite double norm kernel, is convex. If there exists
  a Pareto optimal strategy $\eta_\star \in \mathcal{P}$ such that $\sup_\Omega \eta_\star
  \in (0, 1)$, then the strategies $\lambda \eta_\star$ are Pareto optimal for all
  $\lambda \in [0, 1/ \sup_\Omega \eta_\star]$.
\end{proposition}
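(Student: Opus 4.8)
The plan is to reduce Pareto optimality of the whole ray to the single fact that $\eta_\star$ minimises a scalarised objective $g_\theta=R_e+\theta\,C$ with a \emph{strictly positive} weight $\theta>0$, and then to exploit that $g_\theta$ is affine along the ray $\lambda\mapsto\lambda\eta_\star$.

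First I would record how the two objectives behave along the ray. Writing $s=\sup_\Omega\eta_\star\in(0,1)$, the function $\lambda\eta_\star$ lies in $\Delta$ exactly for $\lambda\in[0,1/s]$, and $1/s>1$. Since $T_{\kk(\lambda\eta_\star)}=\lambda\,T_{\kk\eta_\star}$ and the spectral radius is positively homogeneous, we have $R_e(\lambda\eta_\star)=\lambda\,R_e(\eta_\star)$ for all such $\lambda$ (this extends Proposition~\ref{prop:R_e}~\ref{prop:normal} beyond $[0,1]$). As $C$ is affine on the segment $[\zero,(1/s)\eta_\star]\subset\Delta$, the map $\lambda\mapsto C(\lambda\eta_\star)$ is affine on $[0,1/s]$. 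Consequently, for any fixed $\theta$, the map $\lambda\mapsto g_\theta(\lambda\eta_\star)=R_e(\lambda\eta_\star)+\theta\,C(\lambda\eta_\star)$ is affine on $[0,1/s]$.

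The heart of the argument is the scalarisation step. Because $C$ is affine and $R_e$ is convex, the value function $\mir$ is convex on $[0,\cmax]$ (take a convex combination of near-optimal strategies: the constraint $C\le c$ is preserved and $R_e$ only decreases). Set $c_0=C(\eta_\star)$ and $\ell_0=R_e(\eta_\star)$; since $\sup_\Omega\eta_\star<1$ we have $\eta_\star\neq\un$, hence $\int\eta_\star\,\rd\mu<1$ and, as $C$ is decreasing, $c_0>0$, while $\eta_\star\in\cp$ gives $\ell_0=\mir(c_0)$ with $c_0\in(0,\cmir]$. As $\mir$ is convex and \emph{strictly} decreasing on $[0,\cmir]$ (and constant equal to $0$ afterwards), it admits at $c_0$ a supporting line of strictly negative slope $-\theta$, i.e.\ $\theta>0$ and $\mir(c)\ge \ell_0-\theta\,(c-c_0)$ for all $c$. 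Combining with $R_e(\eta)\ge \mir(C(\eta))$ yields $g_\theta(\eta)\ge \ell_0+\theta\,c_0=g_\theta(\eta_\star)$ for every $\eta\in\Delta$, so $\eta_\star$ minimises $g_\theta$ over $\Delta$. Securing $\theta>0$ (rather than merely $\theta\ge0$) is the delicate point, and it is exactly where strict monotonicity of $\mir$ is used; for $c_0\in(0,\cmir)$ every subgradient of the convex strictly decreasing $\mir$ is negative, and at the interior point $c_0=\cmir$ one may still choose the strictly negative left slope $\mir'(\cmir^-)<0$.

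It remains to propagate minimality along the ray and to convert it back into membership of $\cp$. The affine map $\lambda\mapsto g_\theta(\lambda\eta_\star)$ attains its minimum over $[0,1/s]$ at $\lambda=1$ (because $\lambda\eta_\star\in\Delta$ and $\eta_\star$ minimises $g_\theta$), and $\lambda=1$ is interior since $1<1/s$; an affine function on an interval whose minimum is interior is constant, so $g_\theta(\lambda\eta_\star)=\min_\Delta g_\theta$ for every $\lambda\in[0,1/s]$, i.e.\ each $\lambda\eta_\star$ also minimises $g_\theta$. Finally, writing $c=C(\lambda\eta_\star)$ and $\ell=R_e(\lambda\eta_\star)$, I would verify the two defining equalities of $\cp$ directly. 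One always has $\ell\ge\mir(c)$; if $\ell>\mir(c)$, a minimiser $\eta'$ of $R_e$ under $C\le c$ would satisfy $g_\theta(\eta')\le\mir(c)+\theta c<\ell+\theta c=g_\theta(\lambda\eta_\star)$, contradicting minimality, so $\ell=\mir(c)$. Symmetrically $c\ge\Cinf(\ell)$, and if $c>\Cinf(\ell)$ a minimiser $\eta''$ of $C$ under $R_e\le\ell$ would give $g_\theta(\eta'')\le\ell+\theta\,\Cinf(\ell)<\ell+\theta c=g_\theta(\lambda\eta_\star)$, using $\theta>0$, again a contradiction, so $c=\Cinf(\ell)$. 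Hence $\lambda\eta_\star\in\cp$ for every $\lambda\in[0,1/\sup_\Omega\eta_\star]$, as claimed.
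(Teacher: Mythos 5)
Your argument is correct in substance but follows a genuinely different route from the paper's. The paper works directly with an arbitrary competitor: given $\eta$ with $\loss(\eta)\le\loss(\lambda\eta_\star)=\lambda\loss(\eta_\star)$, it forms the convex combination $(1-s)\eta_\star+s\eta/\lambda\in\Delta$ (possible because $\sup_\Omega\eta_\star<1$), bounds its loss by $\loss(\eta_\star)$ using homogeneity and convexity of $\loss$, invokes Pareto optimality of $\eta_\star$ to get a cost inequality, and uses affineness of $C$ to untangle it into $C(\eta)\ge C(\lambda\eta_\star)$; Pareto optimality of $\lambda\eta_\star$ then follows from \cite[Proposition~5.5]{ddz-theo}, with the endpoint $\lambda=0$ handled by closedness of $\cp$. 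You instead scalarize: you prove convexity of the value function $\mir$, extract a supporting line of slope $-\theta<0$ at $c_0=C(\eta_\star)$, deduce that $\eta_\star$ minimizes $R_e+\theta C$ over $\Delta$, and propagate this minimality along the ray by the ``affine function with an interior minimum is constant'' argument. Your route buys a self-contained verification of both defining equalities of $\cp$ (no external citation, and $\lambda=0$ is covered uniformly), at the price of needing the multiplier $\theta$ to be \emph{strictly} positive.

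That strict positivity is the one soft spot. For $c_0\in(0,\cmir)$ your argument is airtight: every subgradient of $\mir$ at $c_0$ is at most $-\mir(c_0)/(\cmir-c_0)<0$. But at $c_0=\cmir$ the claim $\mir'(\cmir^-)<0$ is not justified: a convex function that decreases strictly to $0$ on $[0,\cmir]$ and vanishes afterwards can perfectly well have vanishing left derivative at $\cmir$ (consider $c\mapsto\max(\cmir-c,0)^2$), so you cannot simply ``choose the strictly negative left slope''. Fortunately this case cannot occur under the hypotheses: $c_0=\cmir$ forces $\loss(\eta_\star)=\mir(\cmir)=0$, and then for $\lambda\in(1,1/\sup_\Omega\eta_\star]$ the strategy $\lambda\eta_\star$ still has zero loss but, since $C$ is decreasing and $\int_\Omega\eta_\star\,\rd\mu>0$ (unless $\eta_\star=\zero$ a.s., in which case the conclusion is trivial), it has cost strictly smaller than $C(\eta_\star)=\Cinf(0)$, contradicting the definition of $\Cinf(0)$. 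Once you exclude this case explicitly, your proof is complete.
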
 

\begin{remark}\label{rem:cric-ray}
  Suppose assumptions of Proposition~\ref{prop:critical-ray} hold so that there
  is an optimal ray $\{ \lambda \eta_\star \, \colon \, \lambda \in [0,1] \} \subset
  \mathcal{P}$, where $\sup_\Omega \eta_\star = 1$. Then, by homogeneity of the loss
  function, the Pareto frontier has a linear part (from $(C(\eta_\star),
  \loss( \eta _\star))$ to $(\cmax,0)$). 
\end{remark}

\begin{remark}
  Suppose that $C$ is continuous decreasing and affine and that $R_e$ is
  concave. With a similar proof (but for the last part which has to be
  replaced by the fact that $\Csup(0+)=\cmax$), we can show that if
  $\eta^\star$ is anti-Pareto optimal such that
  $\sup_\Omega \, \eta^\star \in (0,1)$, then $\lambda \eta^\star$ is
  also anti-Pareto optimal for all
  $\lambda \in [0,1/ \sup_\Omega \eta^\star]$.
\end{remark}

\begin{proof}[Proof of Proposition~\ref{prop:critical-ray}]
  Assume there exists $\eta_\star \in \mathcal{P}$ such that 
  $\sup_\Omega \eta_\star \in (0, 1)$. Let
  $\lambda \in (0, 1 / \sup_\Omega \eta_\star]$, so that $\lambda \eta_\star\in
  \Delta$, and let $\eta \in \Delta$ such that
  $\loss(\eta) \leq \loss(\lambda \eta_\star)$, and thus $\loss(\eta)
  \leq \lambda \loss(\eta_\star)$. Since $\sup_\Omega \eta_\star < 1$,
  there exists $s \in (0,1]$ such that
  $(1-s) \eta_\star + s \eta / \lambda \in \Delta$. Using the
  homogeneity and the convexity of $\loss$, we get:
  \begin{align*}
    \loss( (1-s) \eta_ \star + s \eta/ \lambda)
  &= \frac{1}{\lambda} \loss((1-s) \lambda \eta_\star + s \eta ) \\
    &\leq (1 - s) \loss(\lambda \eta_\star) / \lambda + s \loss(\eta)/\lambda \\
    &\leq \loss(\eta_\star). 
  \end{align*}
  Since $\eta_\star$ is Pareto optimal, we deduce that
  $C((1-s) \eta_\star + s \eta / \lambda) \geq C(\eta_\star)$. Since $C$
  is affine, we get that $C(\eta) \geq C(\lambda \eta_\star)$. Hence,
  $\lambda \eta_\star$ is solution of the problem $\min C(\eta)$ for
  $\eta\in \Delta$ such that $\loss(\eta)\leq \ell$ with
  $\ell = \loss(\lambda \eta_\star)$. We conclude that
  $\lambda \eta_\star$ is Pareto optimal using
  \cite[Proposition~5.5~(ii)]{ddz-theo}. Use that the Pareto optimal set
  is closed, see \cite[Corollary~5.7]{ddz-theo} to get that $\lambda
  \eta_\star$ is Pareto optimal for $\lambda=0$. 
\end{proof}

\subsection{Creating a cordon sanitaire is not the worst idea}\label{sec:cordon}

We say a strategy $\eta\in \Delta$ is a \emph{cordon sanitaire} or \emph{disconnecting}
(for the kernel $\kk$) if $\eta\neq \zero$ and the kernel $\kk$ restricted to the set
$\{\eta>0\}$ is not connected (or equivalently not irreducible). We make some elementary
comments on disconnecting strategies. 

\begin{remark}\label{rem:disco}
  Let $\kk$ be a kernel on $\Omega$. 
  \begin{propenum}
  \item The strategy $\eta=\un$ is disconnecting if and only if $\kk$ is not connected.
  \item A strategy $\eta$ is disconnecting if and only if the strategy $\ind{\{\eta>0\}}$
    is disconnecting.
  \item\label{item:disco-supp} If $\kk>0$, then there is no disconnecting strategy.
  \end{propenum}
\end{remark}

The next proposition states that if the strategy $\eta$ is anti-Pareto optimal for a
kernel $\kk$ and non zero, then the kernel $\kk$ restricted to $\{\eta>0\}$ is irreducible
and thus the kernel $\ind{\{\eta>0\}} \kk\ind{\{\eta>0\}}$ is quasi-irreducible. Let us
remark that in general none of those implications are equivalences.

\begin{proposition}[A cordon sanitaire is never the worst idea]\label{prop:cut}
  Suppose that the cost function $C$ is continuous decreasing and consider the loss
  function $R_e[\kk]$, with $\kk$ a finite double norm kernel on $\Omega$ such that
  $R_0[\kk]>0$. Then, a disconnecting strategy is not anti-Pareto optimal.
\end{proposition}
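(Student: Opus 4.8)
The plan is to argue by contradiction: suppose $\eta$ is a nonzero disconnecting and anti-Pareto optimal strategy, and show that one can strictly improve on it (in the anti-Pareto sense), contradicting optimality. The intuition is that a cordon sanitaire artificially cuts the effective population into non-communicating pieces, and the spectral radius of $T_{\kk\eta}$ is then the maximum of the spectral radii on the pieces; one ought to be able to redirect vaccination effort toward the dominant piece, thereby keeping (or raising) the loss $R_e$ while lowering the cost $C$, which is exactly what an anti-Pareto optimal point forbids.

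First I would reduce to the structure of the disconnection. Since $\eta$ is disconnecting, the kernel $\kk$ restricted to $\set{\eta>0}$ is reducible, so there is a nontrivial partition of $\set{\eta>0}$ into invariant pieces; let $A\subset\set{\eta>0}$ be an invariant set for $\kk\eta$ with $0<\mu(A)$ and $\mu(\set{\eta>0}\setminus A)>0$. By the block structure and Lemma~\ref{lem:prop-spec-mult}, the spectrum of $T_{\kk\eta}$ decomposes so that
\[
  R_e(\eta)=\rho(T_{\kk\eta})=\max\bigl(\rho(T_{\kk\,\eta\ind{A}}),\,\rho(T_{\kk\,\eta\ind{A^c}})\bigr),
\]
and I would single out the block, say $A$, achieving the maximum. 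The key point is that increasing $\eta$ on the \emph{other} block $A^c\cap\set{\eta>0}$ — for instance replacing $\eta$ there by the larger value $1$ — does not decrease the overall spectral radius, because $R_e$ is non-decreasing in $\eta$ by Proposition~\ref{prop:R_e}~\ref{prop:increase}. Thus I can manufacture a strategy $\tilde\eta\geq\eta$ with $\tilde\eta=\eta$ on the dominant block and $\tilde\eta$ strictly larger on a set of positive measure in the complementary block, so that $R_e(\tilde\eta)\geq R_e(\eta)$ while, because $C$ is \emph{decreasing}, $C(\tilde\eta)<C(\eta)$ (strictly raising $\eta$ on a positive-measure set strictly lowers the cost).

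The anti-Pareto optimality of $\eta$ means $C(\eta)=\Csup(R_e(\eta))$, i.e.\ $\eta$ has the maximal cost among strategies with loss at least $R_e(\eta)$. But $\tilde\eta$ satisfies $R_e(\tilde\eta)\geq R_e(\eta)$ with $C(\tilde\eta)<C(\eta)$, which is compatible; the contradiction I actually want is in the \emph{other} direction, so I would instead use the characterization $R_e(\eta)=\mar(C(\eta))$ together with monotonicity of $\mar$. Concretely, since raising $\eta$ on the sub-dominant block leaves the dominant block — and hence $R_e$ — untouched while strictly decreasing the cost, the point $(C(\tilde\eta),R_e(\tilde\eta))$ lies strictly to the left of $(C(\eta),R_e(\eta))$ at the same (or higher) height, so $\eta$ cannot lie on the anti-Pareto frontier $\AF=\set{(\Csup(\ell),\ell):\ell\in[0,R_0]}$, as $\Csup(R_e(\eta))\geq C(\tilde\eta\text{-type strategies})$ would force a strictly larger cost to be attainable at loss $R_e(\eta)$.

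The main obstacle, and the step requiring real care, is the reduction in the previous paragraph in the infinite-dimensional kernel setting: I must verify that the \emph{restricted} operator really does split the spectrum as a maximum over invariant blocks, which relies on the correct notion of $\kk$-invariance (here for the product kernel $\kk\eta$, not $\kk$ alone) and on the block-triangular decomposition of compact positive operators — this is exactly where the generalized definition of cordon sanitaire, via irreducibility of $\kk$ restricted to $\set{\eta>0}$, must be invoked. I would lean on Lemma~\ref{lem:R0simple} and the irreducibility/invariance machinery of Section~\ref{sec:settings} to guarantee that the dominant block has positive spectral radius (using $R_0[\kk]>0$) and that enlarging $\eta$ off this block genuinely costs nothing in loss while strictly saving in cost, thereby completing the contradiction.
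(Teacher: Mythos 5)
Your spectral decomposition step matches the paper's (it is Lemma~\ref{lem:ext-61} applied to the kernel $\kk\eta$ and the invariant partition $A\cup B=\{\eta>0\}$), but the perturbation you build goes in the wrong direction, and you half-notice this without repairing it. Increasing $\eta$ on the sub-dominant block yields $\tilde\eta$ with $R_e(\tilde\eta)\geq R_e(\eta)$ and $C(\tilde\eta)<C(\eta)$; as you yourself remark, this is perfectly compatible with anti-Pareto optimality. Indeed $\Csup(R_e(\eta))$ is a \emph{maximum} of costs over strategies with loss at least $R_e(\eta)$, so exhibiting a cheaper strategy with the same or larger loss contradicts nothing; and it does not touch the condition $R_e(\eta)=\mar(C(\eta))$ either, since $\tilde\eta$ fails the constraint $C(\tilde\eta)\geq C(\eta)$ defining that maximum. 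Your closing claim that this ``would force a strictly larger cost to be attainable at loss $R_e(\eta)$'' is the opposite of what your construction delivers, so the contradiction is never actually obtained.

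The fix is to perturb the other way. With $A$ the block achieving the maximum, set $\eta_\theta=\eta\ind{A}+\theta\,\eta\ind{B}$ for $\theta\in[0,1)$, i.e.\ vaccinate \emph{more} on the sub-dominant block $B$. Since $(\kk\eta_\theta)(B,A)=0$ still holds, the block decomposition and the homogeneity of the spectral radius give $R_e(\eta_\theta)=\max\bigl(R_e[\kk\eta](\ind{A}),\theta R_e[\kk\eta](\ind{B})\bigr)=R_e[\kk\eta](\ind{A})=R_e(\eta)$, while $C(\eta_\theta)>C(\eta)$ because $C$ is decreasing and $\mu(B)>0$. This exhibits a strategy with the same loss and strictly larger cost, hence $C(\eta)<\Csup(R_e(\eta))$ and $\eta$ is not anti-Pareto optimal. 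This is exactly the paper's argument; the rest of your outline (the reduction to invariant blocks of $\kk\eta$ rather than $\kk$, and the care needed for the infinite-dimensional splitting of the spectrum) is sound as stated.
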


In the non-oriented cycle graph from Example~\ref{ex:cycle-graph}, this property is illustrated in
Figure~\ref{fig:perf} as the disconnecting strategy ``one in $4$'', see
Figure~\ref{fig:cycle-kern-graph-disc}, is not anti-Pareto.

The proof of the proposition relies on the next lemma which is a direct application of
\cite[Lemma~11]{schwartz61} to our setting. For $A\in \cf$, let 
$\mult(\lambda, \kk, A)$ be the
multiplicity (possibly equal to 0)
of the eigenvalue $\lambda\in \C^*$ for the integral operator $T_{\kk
\ind{A}}$ associated to the
kernel $\kk\ind{A}$. 
\begin{lemma}
  \label{lem:ext-61}
  Let $\kk$ be kernel with finite double
  norm. Let $A, B\in \cf$ be such that $A\cap
  B=\emptyset$ a.s.\ and $\kk(B,A)=0$. For all
  $\lambda\in \C^*$, we have:
  \[
    \mult (\lambda, \kk, A\cup B)= \mult(\lambda, \kk, A) + \mult(\lambda, \kk, B), 
  \]
  and thus
  \begin{equation}
    \label{eq:R0=maxR0}
    R_e[\kk](\ind{A}+ \ind{B})=\max \big( R_e[\kk](\ind{A}),R_e[\kk](\ind{B})
    \big).
  \end{equation}
\end{lemma}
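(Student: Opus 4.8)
The plan is to compute $\mult(\lambda,\kk,A\cup B)$ for $\lambda\in\C^*$ by exhibiting a block upper‑triangular structure, and then to read off the spectral radius identity~\eqref{eq:R0=maxR0} from the resulting additivity of eigenvalues. First I would record two elementary reductions. Since $A\cap B=\emptyset$ a.s.\ we have $\ind{A\cup B}=\ind A+\ind B$; and since $\kk\geq 0$, the hypothesis $\kk(B,A)=\int_{B\times A}\kk\,\mu(\rd z)\mu(\rd y)=0$ forces $\ind B\,\kk\,\ind A=0$ as a kernel ($\mu\otimes\mu$-a.e.). Next, for any measurable $S$ I would pass from the ambient operator $T_{\kk\ind S}=T_\kk M_{\ind S}$ to the \emph{doubly restricted} operator $T_{\ind S\kk\ind S}=M_{\ind S}T_\kk M_{\ind S}$. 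Since $M_{\ind S}$ is idempotent, $T_\kk M_{\ind S}=(T_\kk M_{\ind S})M_{\ind S}$; the first factor is compact because $T_\kk$ is, so Lemma~\ref{lem:prop-spec-mult}~\ref{item:spec-adjoint-mult} (the equality of multiplicities for $AB$ and $BA$ on $\C^*$) gives
\[
  \mult(\lambda,\kk,S)=\mult(\lambda,T_\kk M_{\ind S})=\mult\big(\lambda, M_{\ind S}T_\kk M_{\ind S}\big)=\mult(\lambda,T_{\ind S\kk\ind S})\qquad(\lambda\in\C^*).
\]

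Applying this with $S=A\cup B$ reduces the problem to $T_{\ind{A\cup B}\kk\ind{A\cup B}}$. This operator annihilates the functions supported on $(A\cup B)^c$ and preserves $L^p(A\cup B)$, which splits as $L^p(A)\oplus L^p(B)$ (functions vanishing off $A$, resp.\ off $B$); all nonzero spectral data live there, since $\lambda\neq 0$ and $Tg=\lambda g$ force $g=\lambda^{-1}Tg$ to be supported on $A\cup B$. On this decomposition the operator is block upper‑triangular,
\[
  T_{\ind{A\cup B}\kk\ind{A\cup B}}\big|_{L^p(A)\oplus L^p(B)}=\begin{pmatrix}P & Q\\ 0 & R\end{pmatrix},\qquad P=T_{\ind A\kk\ind A},\quad R=T_{\ind B\kk\ind B},\quad Q=T_{\ind A\kk\ind B},
\]
the lower‑left block vanishing precisely because $\ind B\,\kk\,\ind A=0$.

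The crux—and the main obstacle—is the additivity of algebraic multiplicities for a block upper‑triangular compact operator: for $\lambda\in\C^*$ one wants $\mult(\lambda,T)=\mult(\lambda,P)+\mult(\lambda,R)$. Here $L^p(A)\oplus\{0\}$ is $T$‑invariant with restriction $P$, and the induced operator on the quotient $\cong L^p(B)$ is $R$. The argument I would use is via Riesz spectral projections: around an isolated nonzero eigenvalue the projection $\Pi_\lambda=\frac{1}{2\pi i}\oint(\zeta-T)^{-1}\,\rd\zeta$ has finite rank equal to the algebraic multiplicity, and because $(\zeta-T)^{-1}$ is block upper‑triangular with diagonal blocks $(\zeta-P)^{-1}$ and $(\zeta-R)^{-1}$, so is $\Pi_\lambda$, its diagonal blocks being exactly the Riesz projections of $P$ and $R$. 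Taking traces (rank equals trace for a finite‑rank projection, and the trace of a block‑triangular operator is the sum of the traces of its diagonal blocks) yields the additivity; this is exactly \cite[Lemma~11]{schwartz61}. Finally I would identify $\mult(\lambda,P)=\mult(\lambda,\kk,A)$ and $\mult(\lambda,R)=\mult(\lambda,\kk,B)$ using the reduction of the first paragraph, noting that $T_{\ind A\kk\ind A}$ kills $L^p(A^c)$ so its nonzero generalized eigenvectors lie in $L^p(A)$ and the multiplicities coincide. This proves the multiplicity identity for all $\lambda\in\C^*$.

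For~\eqref{eq:R0=maxR0}, the multiplicity identity shows that a nonzero $\lambda$ is an eigenvalue of $T_{\kk(\ind A+\ind B)}$ if and only if it is an eigenvalue of $T_{\kk\ind A}$ or of $T_{\kk\ind B}$; since the nonzero spectrum of a compact operator consists exactly of its eigenvalues, $\spec(T_{\kk(\ind A+\ind B)})\cap\C^*=\big(\spec(T_{\kk\ind A})\cup\spec(T_{\kk\ind B})\big)\cap\C^*$. The spectral radius is the largest modulus attained on the spectrum, and the point $0$ contributes nothing positive, so
\[
  R_e[\kk](\ind A+\ind B)=\rho\big(T_{\kk(\ind A+\ind B)}\big)=\max\big(\rho(T_{\kk\ind A}),\rho(T_{\kk\ind B})\big)=\max\big(R_e[\kk](\ind A),R_e[\kk](\ind B)\big),
\]
with the convention that the maximum is $0$ when both operators are quasi‑nilpotent. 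The measure‑theoretic bookkeeping and the passage between ambient and restricted operators are routine given Lemma~\ref{lem:prop-spec-mult}; the only genuinely substantial input is the block‑triangular additivity invoked above.
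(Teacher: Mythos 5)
Your proof is correct and follows the same route as the paper: the paper's proof of this lemma consists entirely of invoking \cite[Lemma~11]{schwartz61}, which is precisely the block-upper-triangular additivity of algebraic multiplicities for compact operators that you establish via Riesz projections and the trace argument. The surrounding reductions (passing from $T_{\kk\ind{S}}$ to $M_{\ind{S}}T_\kk M_{\ind{S}}$ via Lemma~\ref{lem:prop-spec-mult}~\ref{item:spec-adjoint-mult}, localizing the nonzero generalized eigenvectors to $L^p(A\cup B)$, and deducing \eqref{eq:R0=maxR0} from the multiplicity identity) are exactly what ``to our setting'' means there, so the only difference is that you supply a self-contained proof of the cited lemma rather than quoting it.
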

We are now in a position to prove Proposition~\ref{prop:cut}.

\begin{proof}[Proof of Proposition~\ref{prop:cut}]
  Let $\eta$ be a disconnecting strategy, and thus $\eta\neq \zero$. Since $\eta$ is
  disconnecting, that is, $\kk$ restricted to $\{\eta>0\}$ is not irreducible, we deduce
  there exists $A, B\in \cf$ such that $\mu(A)>0$, $\mu(B)> 0$, $(\kk \eta) (B, A)=0$ and
  a.s. $A\cup B=\{\eta>0\}$ and $A\cap B=\emptyset$. In particular \eqref{eq:R0=maxR0}
  holds with $\kk$ replaced by $\kk \eta$. First assume that $R_e[\kk\eta](\ind{A})\geq
  R_e[\kk\eta](\ind{B})$, so that \eqref{eq:R0=maxR0} yields: \[ R_e[\kk](\eta)=
  R_e[\kk\eta](\ind{A}+\ind{B})= R_e[\kk\eta](\ind{A}). \] For $\theta\in [0, 1]$, define
  the strategy $\eta_\theta=\eta \ind{A}+ \theta \eta \ind{B}$. We deduce that:
  \begin{align*}
    R_e[\kk](\eta_\theta)
    = R_e[\kk\eta_\theta](\ind{A} + \ind{B})
    &= \max (R_e[\kk\eta_\theta](\ind{A}), R_e[\kk\eta_\theta](\ind{B})) \\
    &=\max (R_e[\kk\eta](\ind{A}), \theta R_e[\kk\eta](\ind{B}))\\
    & = R_e[\kk\eta](\ind{A})\\
    &= R_e[\kk](\eta),
  \end{align*} 
  where we used \eqref{eq:R0=maxR0} with $\kk$ replaced by $\kk \eta_\theta$ for the
  second equality as $(\kk \eta_\theta) (B, A)=0$, and the homogeneity of the spectral
  radius in the third. Thus, the map $\theta\mapsto R_e[\kk](\eta_\theta)$ is constant on
  $[0, 1]$. Since $\mu(B)>0$ and $C$ is decreasing, we get that $\theta \mapsto
  C(\eta_\theta)$ is decreasing. This implies that $\eta_\theta$ is worse than $\eta$ for
  any $\theta\in [0, 1)$, and thus $\eta$ is not anti-Pareto optimal.

  The case $R_e[\kk\eta](\ind{B})\geq R_e[\kk\eta](\ind{A})$ is handled similarly.
\end{proof}

\begin{rem}\label{rem:plateau}
  If the kernel $\kk$ is irreducible, then the upper boundary of the set of outcomes $\FF$
  is the anti-Pareto frontier, see Figure~\ref{fig:generic_frontiers-irr} for an instance.
  We deduce from Proposition~\ref{prop:cut} that if $\eta_0$ is a disconnecting strategy,
  then we have $R_e[\kk](\eta_0)< \sup\{R_e[\kk](\eta)\, \colon\, C(\eta)=C(\eta_0)\}$.

  However, if the kernel $\kk$ is not irreducible, then the trivial strategy $\un $ is
  disconnecting. Furthermore, the upper boundary of the set of outcomes $\FF$ is not
  reduced to the anti-Pareto frontier, see Figure~\ref{fig:generic_frontiers-gen} for
  instance. In fact, there exists disconnecting strategies that are not anti-Pareto
  optimal, but whose outcomes lie on the flat parts of the upper boundary of $\FF$. In
  particular, such strategies have the worst loss given their cost. However, it is not
  difficult to check that they do not disconnect further than the trivial strategy $\un$.
\end{rem}

\subsection{A characterization of \texorpdfstring{$\cmir=\Cinf(0)$}{C*(0)}
  when the support of \texorpdfstring{$\kk$}{k} is symmetric}\label{sec:independance}

We characterize the Pareto optimal strategies which minimize $R_e$ when the kernel $\kk$
has a symmetric support; and we get a very simple representation of $\Cinf(0)$ when the
cost is uniform $C=\costu$.

\medskip

Let us first recall a notion from graph theory. If $G=(V,E)$ is an non-oriented graph with
vertices set $V$ and edge set $E$, an \emph{independent} set of $G$ is a subset $A\subset
V$ of vertices which are pairwise not adjacent, that is, $i,j\in A$ implies $i j\not\in
E$. The \emph{independence number} of a graph $G$, denoted by $\alpha(G)$, is the maximum
of $\sharp A/ \sharp G$, over all the independent sets $A$ of $G$. Following
\cite{hladky_independent_2020}, we generalize this definition to kernels.

\begin{definition}[Independent sets for kernels]\label{def:indep-number}
  Let $\kk$ be a kernel on $\Omega$. A measurable set $A \in \cf$ is an independent set
  of $\kk$ if $\kk=0$ $\mu^{\otimes 2}$-a.s.\ on $A\times A$. The independence number
  $\alpha(\kk )$ of the kernel $\kk $ is:
  \[
    \alpha(\kk ) = \sup\{ \mu(A)\,\colon\, \text{$A$ is an independent set of
    $\kk $}\}.
  \]
\end{definition}

A compactness argument will show that the supremum defining $\alpha$ is reached. 

\begin{proposition}[Existence of a maximal independent set]\label{prop:max-indep-set}
  For any kernel $\kk $ on $\Omega$, there exists an independent set $A$ of
  $\kk $ that is \emph{maximal}, in the sense that $\mu(A) =
  \alpha(\kk )$. 
\end{proposition}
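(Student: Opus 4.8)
The plan is to use the direct method from the calculus of variations, turning the combinatorial constraint ``$A$ is independent'' into the vanishing of a non-negative quadratic form, to which one can pass to the limit thanks to compactness.

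First I would reduce to a bounded kernel. Since $A$ is an independent set of $\kk$ exactly when $\mu^{\otimes 2}\big(\{\kk>0\}\cap(A\times A)\big)=0$, both the notion of independence and the number $\alpha(\kk)$ depend only on the support $W=\{\kk>0\}$, so I may replace $\kk$ by $\ind{W}$. This is a $\{0,1\}$-valued kernel, hence on the probability space $(\Omega,\cf,\mu)$ it satisfies $\norm{\ind{W}}_{2,2}\le 1$ and has finite double norm, so that $T_{\ind{W}}$ is a positive compact operator on $L^2(\mu)$. The crucial reformulation is that for $A\in\cf$,
\[
  \langle \ind{A}, T_{\ind{W}}\ind{A}\rangle
  =\int_\Omega\!\int_\Omega \ind{A}(x)\,\ind{W}(x,y)\,\ind{A}(y)\,\mu(\rd x)\,\mu(\rd y)
  =\mu^{\otimes 2}\big(W\cap(A\times A)\big),
\]
so $A$ is independent if and only if this quantity vanishes.

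Next I would run the direct method. Pick a maximizing sequence of independent sets $A_n$ with $\mu(A_n)\to\alpha(\kk)$. The functions $\ind{A_n}$ lie in $\Delta$, which is sequentially compact for the weak topology (Banach--Alaoglu, as recalled in Section~\ref{sec:weak-topo}); extracting a subsequence, $\ind{A_n}$ converges weakly in $L^2(\mu)$ to some $g\in\Delta$, so that $0\le g\le 1$ and, testing against the constant function $\un\in L^2$, $\int_\Omega g\,\rd\mu=\lim_n\mu(A_n)=\alpha(\kk)$. The key step is then to pass the constraint $\langle \ind{A_n},T_{\ind{W}}\ind{A_n}\rangle=0$ to the limit. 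Because $T_{\ind{W}}$ is compact, it maps the weakly convergent sequence $\ind{A_n}$ to a norm-convergent one, $T_{\ind{W}}\ind{A_n}\to T_{\ind{W}}g$ in $L^2(\mu)$; combining this with the weak convergence of $\ind{A_n}$ and the uniform bound $\norm{\ind{A_n}}_2\le 1$ yields $\langle \ind{A_n},T_{\ind{W}}\ind{A_n}\rangle\to\langle g,T_{\ind{W}}g\rangle$, whence $\langle g,T_{\ind{W}}g\rangle=0$.

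Finally I would identify the maximizer. Since the integrand $g(x)\,\ind{W}(x,y)\,g(y)$ is non-negative and integrates to $0$, it vanishes $\mu^{\otimes 2}$-a.s., i.e.\ $W\cap(\{g>0\}\times\{g>0\})$ is $\mu^{\otimes 2}$-null; hence $A:=\{g>0\}$ is an independent set of $\kk$. As $0\le g\le 1$ we have $g\le\ind{A}$, so $\mu(A)\ge\int_\Omega g\,\rd\mu=\alpha(\kk)$, while $\mu(A)\le\alpha(\kk)$ by definition of the independence number; therefore $\mu(A)=\alpha(\kk)$ and $A$ is maximal. The main obstacle is precisely that the weak limit $g$ need not be an indicator, so one cannot expect $A_n$ to converge to a set; the two devices that circumvent this are using the compactness of $T_{\ind{W}}$ to push the bilinear independence constraint through the weak limit, and recovering a genuine independent \emph{set} by taking the support $\{g>0\}$ and exploiting $g\le\ind{\{g>0\}}$ to keep the measure inequality in the favourable direction.
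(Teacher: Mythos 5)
Your proof is correct and follows essentially the same route as the paper: reduce to the indicator kernel of the support, take a maximizing sequence, extract a weak limit $g\in\Delta$ by sequential compactness, pass the quadratic independence constraint to the limit, and take $A=\{g>0\}$, using $g\le\ind{\{g>0\}}$ to close the measure inequality. The only (harmless) difference is that you justify $\langle \ind{A_n},T_{\ind{W}}\ind{A_n}\rangle\to\langle g,T_{\ind{W}}g\rangle$ via compactness of $T_{\ind{W}}$ on $L^2$, whereas the paper uses a.s.\ convergence of $T_\kk(\ind{A_n})$ together with the weak convergence of $\ind{A_n}$.
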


\begin{proof}
  First, notice that the independent sets and maximal independent sets of a kernel
  $\kk $ depends only on the support $\set{\kk >0}$ of $\kk $.
  Therefore, the maximal independent sets of the kernel $\kk $ and of the kernel
  $\mathds{1}_{\set{\kk >0}}$ are the same. In particular, we can assume without
  loss of generality that the kernel $\kk$ is bounded.

  Let $(A_n, \, n \in \N)$ be a sequence of independent sets for $\kk$ such that:
  \[
    \lim\limits_{n \to \infty} \mu(A_n) = \alpha(\kk ).
  \]
  Since $\Delta$ is sequentially compact for the weak topology, up to
  taking a sub-sequence, we may assume that the sequence
  $(\mathds{1}_{A_n}, \, n \in \N)$ converges weakly to some function
  $g \in \Delta$. Since $\kk$ is bounded, the integral operator $T_\kk$
  is well defined. We deduce that $T_\kk(\mathds{1}_{A_n})$ belongs to
  $\Delta$ and converges a.s.\ towards $T_\kk(g)$. This implies that
  $\mathds{1}_{A_n}T_\kk(\mathds{1}_{A_n})$ converges weakly towards
  $g T_\kk(g)$. We deduce that:
  \[
    \int_\Omega g T_\kk(g) \, \mathrm{d} \mu
    = \lim\limits_{n \to \infty}
    \int_\Omega \mathds{1}_{A_n} T_\kk(\mathds{1}_{A_n}) \, \mathrm{d} \mu
    =\lim\limits_{n \to \infty} \kk(A_n, A_n)
    =0.
  \]
  As $g\in \Delta$, this implies that $\set{g>0} $ is an independent set of $\kk$ and thus
  $\mu\left(g>0\right) \leq \alpha(\kk)$. Besides, since $(\mathds{1}_{A_n} , n \in \N)$
  converges weakly to $g$, we get:
  \[
    \int_\Omega g \, \mathrm{d}\mu = \lim\limits_{n \to \infty} \mu(A_n) = \alpha(\kk ).
  \]
  This implies that
  $\mu\left(g>0 \right) \geq \int_\Omega g \, \mathrm{d}\mu =
  \alpha(\kk)$. We deduce that $\mu\left(g>0 \right)= \alpha(\kk)$, and
  since $\set{g>0}$ is an independent set, it is also maximal.
\end{proof}

In the following result, we prove that maximal independent sets provide optimal Pareto
strategies for the loss function $R_e$ and the cost function $\costu$ given
by~\eqref{eq:def-C} corresponding to the cost $\cmir=\Cinf(0)$, see also
Remark~\ref{rem:alpha=C(A)} for a general cost function. This property is illustrated in
Figure~\ref{fig:perf} where the Pareto frontier of the non-oriented cycle graph from
Example~\ref{ex:cycle-graph}, with $N=12$, is plotted; it is possible to prevent
infections without vaccinating the whole population as $\cmir=1/2<1=\cmax$.

\begin{proposition}\label{prop:CRe(0)}
  Let $\kk$ be a finite double norm kernel on $\Omega$ such that its support,
  $\{\kk>0\}$, is a symmetric subset of $\Omega^2$ a.s. We consider the cost
  $C=\costu $ given by~\eqref{eq:def-C}. For any maximal independent set
  $A_\star$ of $\kk$, the strategy $\ind{A_\star}$ is Pareto optimal for the
  loss $R_e[\kk]$ and we have:
  \begin{equation}
   \cmir= \Cinf(0) = C( \ind{A_\star})= 1 -\alpha(\kk).
  \end{equation} 
\end{proposition}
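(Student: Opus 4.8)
The plan is to prove the two inequalities $\cmir\le 1-\alpha(\kk)$ and $\cmir\ge 1-\alpha(\kk)$ separately, after the trivial computation $C(\ind{A_\star})=\int_\Omega(1-\ind{A_\star})\,\rd\mu=1-\mu(A_\star)=1-\alpha(\kk)$ for a maximal independent set $A_\star$ (which exists by Proposition~\ref{prop:max-indep-set}). Since $R_e\ge 0$, we have $\cmir=\Cinf(0)=\min\{C(\eta)\,\colon\,\eta\in\Delta,\ R_e[\kk](\eta)=0\}$, the minimum being attained by continuity of $R_e[\kk]$ and $C$ together with the weak compactness of $\Delta$. So the whole statement reduces to identifying this minimal cost and checking Pareto optimality.

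For the upper bound I would first show that $R_e[\kk](\ind{A_\star})=0$; this direction needs no symmetry. The kernel of $T_{\kk\ind{A_\star}}^2$ is $(x,z)\mapsto\ind{A_\star}(z)\int_{A_\star}\kk(x,y)\,\kk(y,z)\,\rd\mu(y)$, and in this integral both $y$ and $z$ lie in $A_\star$, so that $\kk(y,z)=0$ a.s.\ by independence of $A_\star$. Hence $T_{\kk\ind{A_\star}}^2=0$, so $\rho(T_{\kk\ind{A_\star}})=0$, that is $R_e[\kk](\ind{A_\star})=0$. As $\ind{A_\star}$ is then a zero-loss strategy, $\cmir\le C(\ind{A_\star})=1-\alpha(\kk)$.

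The main obstacle is the reverse inequality, which amounts to showing that any zero-loss strategy $\eta$ has $\{\eta>0\}$ essentially independent; this is precisely where symmetry of the support is used. I would introduce the symmetric minorant $s=\kk\wedge\kk^\top$: since $\{\kk>0\}$ is symmetric a.s., the kernel $s$ is symmetric, satisfies $s\le\kk$ with $\{s>0\}=\{\kk>0\}$, and inherits a finite double norm. Monotonicity of the spectral radius (Lemma~\ref{lem:prop-spec-mult}~\ref{item:A-B}) gives $R_e[s](\eta)\le R_e[\kk](\eta)=0$. For each $n$ the truncation $s_n=s\wedge n$ is a bounded symmetric kernel, hence Hilbert--Schmidt on $L^2(\mu)$, and $R_e[s_n](\eta)\le R_e[s](\eta)=0$. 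Using $\rho(AB)=\rho(BA)$ from~\eqref{eq:r(AB)} with $A=T_{s_n}M_{\sqrt\eta}$ and $B=M_{\sqrt\eta}$, we get $\rho(T_{s_n}M_\eta)=\rho(M_{\sqrt\eta}T_{s_n}M_{\sqrt\eta})=\rho(T_{\sqrt\eta\,s_n\,\sqrt\eta})$; the kernel $\sqrt\eta\,s_n\,\sqrt\eta$ is symmetric and bounded, so the operator is self-adjoint on $L^2(\mu)$ with $\rho=\norm{T_{\sqrt\eta\,s_n\,\sqrt\eta}}=0$, forcing $\sqrt\eta\,s_n\,\sqrt\eta=0$ a.s. Letting $n\to\infty$ yields $\sqrt{\eta(x)}\,s(x,y)\,\sqrt{\eta(y)}=0$ a.s., i.e.\ $s=0$ a.s.\ on $\{\eta>0\}\times\{\eta>0\}$; as $\{s>0\}=\{\kk>0\}$, the set $\{\eta>0\}$ is an independent set of $\kk$. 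Therefore $\int_\Omega\eta\,\rd\mu\le\mu(\{\eta>0\})\le\alpha(\kk)$, that is $C(\eta)=1-\int_\Omega\eta\,\rd\mu\ge 1-\alpha(\kk)$, and taking the infimum over zero-loss $\eta$ gives $\cmir\ge 1-\alpha(\kk)$.

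Combining the two bounds, $\cmir=\Cinf(0)=1-\alpha(\kk)=C(\ind{A_\star})$. It then remains to verify that $\ind{A_\star}$ is Pareto optimal. Since $R_e[\kk](\ind{A_\star})=0$ and $C(\ind{A_\star})=\cmir=\Cinf(0)=\Cinf\big(R_e[\kk](\ind{A_\star})\big)$, the first defining condition of $\cp$ holds; and since $\mir$ is continuous and vanishes on $[\cmir,1]$, we have $\mir\big(C(\ind{A_\star})\big)=\mir(\cmir)=0=R_e[\kk](\ind{A_\star})$, which is the second condition. Hence $\ind{A_\star}\in\cp$, completing the proof.
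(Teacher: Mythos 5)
Your argument is correct and follows the same overall architecture as the paper's proof: the upper bound comes from the nilpotency $T_{\kk\ind{A_\star}}^2=0$, and the lower bound from showing that any zero-loss $\eta$ has $\{\eta>0\}$ independent, via the symmetrization $\rho(T_{\kk''\eta})=\rho(T_{\sqrt{\eta}\,\kk''\sqrt{\eta}})$ from~\eqref{eq:r(AB)} and the fact that a self-adjoint compact operator with zero spectral radius vanishes. The one genuine difference is the device used to reduce to a \emph{symmetric} kernel before this last step. The paper replaces $\kk$ by the indicator of its support $\kk'=\ind{\{\kk>0\}}$, justifying $R_e[\kk'](\eta)=0$ by squeezing with the level-set kernels $\kk_\varepsilon=\ind{\{\kk>\varepsilon\}}$ (monotonicity gives $\rho(T_{\kk_\varepsilon\eta})=0$, and the stability result Proposition~\ref{prop:Re-stab} passes to the limit $\varepsilon\to 0$). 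You instead take the symmetric minorant $s=\kk\wedge\kk^\top$, for which a single application of the comparison Lemma~\ref{lem:prop-spec-mult}~\ref{item:A-B} already yields $R_e[s](\eta)=0$ with no limiting argument, at the price of a routine truncation $s\wedge n$ to land in the Hilbert--Schmidt setting. Both reductions use the a.s.\ symmetry of $\{\kk>0\}$ in the same essential way and recover the same independent set $\{\eta>0\}$; your route avoids invoking the stability proposition, while the paper's has the small advantage of working directly with the canonical kernel $\ind{\{\kk>0\}}$ that also governs the notion of independence. Your explicit verification of the two defining conditions of $\cp$ for $\ind{A_\star}$ (via $\Cinf(0)=\cmir$ and $\mir(\cmir)=0$) is a welcome elaboration of a step the paper leaves implicit. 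The only point worth making fully explicit is that when you compare $R_e[s_n](\eta)$ and $R_e[s](\eta)$ you are implicitly identifying spectral radii of the same kernel operator acting on different $L^p$ spaces; this is covered by Lemma~\ref{lem:prop-spec-mult}~\ref{item:density-mult} and is the same level of rigor the paper itself uses, so it is not a gap.
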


\begin{remark}\label{rem:alpha=C(A)}
  Definition \ref{def:indep-number} on maximal independent set is in
  fact associated to the uniform cost $C=\costu$. More generally, we
  could define the independence number $\alpha_C(\kk )$ of the
  kernel $\kk$ with respect to a decreasing continuous cost
  function $C$ (recall the convention $C(\un)=0$ and $\cmax=C(\zero)$) as:
  \[
    \alpha_C(\kk) = \sup \{ \cmax - C(\ind{A})\,\colon\, \text{$A$
      is an independent set of $\kk $}\}.
  \]
  The notations are consistent as $\alpha_C=\alpha$ for $C=\costu$.
  Adapting the proof of Proposition \ref{prop:max-indep-set}, we get
  that for any kernel $\kk$ on $\Omega$, there exists an independent set
  $A$ of $\kk$ that is $C$-maximal, in the sense that
  $ \alpha_C(\kk)=\cmax - C(\ind{A})$. Following the proof of
  Proposition~\ref{prop:CRe(0)}, we then get that if the finite double
  norm kernel $\kk$ on $\Omega$ has its support, $\{\kk>0\}$ which is a
  symmetric subset of $\Omega^2$ a.s., then for any $C$-maximal independent
  set $A_\star$ of $\kk$, the strategy $\mathds{1}_{A_\star}$ is Pareto
  optimal for the loss $R_e[\kk]$ and the cost $C$. Furthermore, we
  have:
  \[
   \cmir= \Cinf(0) = C( \mathds{1}_{A_\star})= \min \{C(\ind{A})\,\colon\, \text{$A$
      is an independent set of $\kk $}\}.
  \]
\end{remark}

\begin{proof}[Proof of Proposition~\ref{prop:CRe(0)}]
  The existence of a maximum independent set $A$ is given by Proposition
  \ref{prop:max-indep-set}. The effective reproduction number obviously vanishes for the
  strategy $\mathds{1}_A$ with cost $1 - \alpha(\kk)$ as $(T_{\kk \mathds{1}_A})^2=T_\kk\,
  T_{\mathds{1}_A\kk \mathds{1}_A}=0$. Now, let $\eta \in
  \Delta$ be such that $R_e[\kk](\eta) = 0$. To complete the proof of the
  proposition, it is enough to prove that $\costu(\eta)\geq 1 -\alpha(\kk)$.

  Since $R_e[\kk](\eta) = 0$, the spectral radius of $T_{\kk\eta}$ is equal to $0$. Let
  $\varepsilon>0$ and consider the kernel $\kk_\varepsilon$ defined on $\Omega$ by:
  \[
    \kk_\varepsilon(x,y)= \mathds{1}_{\set{ \kk(x,y)>\varepsilon}}.
  \]
  Since $T_{\kk\eta}- \varepsilon T_{\kk_\varepsilon \eta}$ is a positive operator, we
  deduce from~\eqref{eq:r(A)r(B)} that $\varepsilon \rho(T_{\kk_\varepsilon \eta })= \rho
  (\varepsilon T_{\kk_\varepsilon\eta}) \leq \rho (T_{\kk\eta})=0$ and thus
  $\rho(T_{\kk_\varepsilon\eta})=0$. Set $\kk' = \mathds{1}_{\set{\kk>0}}$. Since
  $\lim_{\varepsilon\rightarrow 0+}\norm{\kk_\varepsilon - \kk'}_{p, q}=0$, we deduce from
  Proposition \ref{prop:Re-stab} on the stability of $R_e$ that $\rho(T_{\kk'
  \eta})=R_e[\kk'](\eta) = \lim_{\varepsilon\rightarrow 0+} R_e[\kk_\varepsilon](\eta) =
  \lim_{\varepsilon\rightarrow 0+} \rho(T_{\kk_\varepsilon \eta})=0$. As the support of
  $\kk$ is symmetric, we deduce that the kernel $\kk'$ is symmetric. According
  to~\eqref{eq:r(AB)}, we have:
  \[
    \rho(T_{\kk''})= \rho(T_{\kk'\eta})=0,
  \]
  with $\kk''= \sqrt{\eta}\, \kk'\, \sqrt{\eta}=\sqrt{\eta}\, \mathds{1}_{\set{\kk>0}}\, 
  \sqrt{\eta}$. Since the kernel $\kk''$ is symmetric, non-negative and bounded by $1$,
  this implies that $\kk''=0$ $\mathrm{d}\mu^{\otimes 2}$-a.s., and thus $\set{\eta>0}$ is
  an independent set for $\kk$. This gives $\mu\left(\eta>0\right) \leq \alpha(\kk)$.
  Therefore, we have the following lower bound for the cost $\costu(\eta)$:
  \[
    \costu(\eta) = 1 - \int_\Omega \eta \, \mathrm{d} \mu 
    \geq 1 - \mu\left(\eta>0\right) \geq 1 - \alpha(\kk). 
  \]
  This ends the proof of the proposition.
\end{proof}

\section{Pareto and anti-Pareto frontiers for reducible kernels}\label{sec:reducible}

When the kernel $\kk$ is ``truly reducible'' (corresponding to the set of indices $I$
below to be such that $\sharp I\geq 2$), it is natural to ask whether the Pareto and
anti-Pareto frontiers of the subsystems entirely characterize the frontiers for $\kk$, and
in what sense the optimization problems can be ``reduced'' to the separate study of each
irreducible component.

We can achieve an elementary description of the anti-Pareto frontier when the kernel is
not reducible using a Frobenius decomposition, see \cite{victory82, victory93} and
\cite{schwartz61} or the ``super diagonal'' form, see \cite[Part~II.2]{dowson}. For
convenience, we follow \cite{schwartz61}, see also \cite[Lemma~5.17]{bjr} in the case
$\kk$ symmetric.

Let $\kk$ be a kernel on $\Omega$ with finite double norm. Let $\ca$ be
the set of $\kk$-invariant sets, and notice that $\ca$ is stable by
countable unions and countable intersections. Let $\sigma(\ca)$ be the
$\sigma$-field generated by $\ca$, and we denote by $(\Omega_i, i\in I)$
the at most countable (but possibly empty) collection of atoms with
respect to the measure $\mu$. Notice that the atoms are define up to an
a.s.\ equivalence and can be chosen to be pair-wise disjoint. For
$i\in I$, we set:
\begin{equation}\label{eq:ki}
  \kk_i=\ind{\Omega_i} \kk \ind{\Omega_i},
\end{equation}
which is a kernel on $\Omega$ with finite double norm. Set
$\Omega_0=\left(\cup_{i\in I} \Omega_i\right)^c$ (and assume the set of
indices $I$ has been chosen so that it does not contain 0). Thanks to
\cite[Lemma~12]{schwartz61} or \cite[Section~II]{victory82}, there
exists a total order, say $\preccurlyeq$, on $I$ (not 
unique in general) such that for all $i, j\in I$:
\begin{propenum}
\item $j\prec i $ implies $\kk(\Omega_i, \Omega_j)=0$. In the
  epidemiology setting, $ j \prec i$ means that the sub-population $\Omega_j$ can not
  infect the sub-population $\Omega_i$. 

\item $\mu(\Omega_i)>0$ and $\kk$ restricted to $\Omega_i$ is irreducible and has positive
  spectral radius, that is
  $\kk_i$ is quasi-irreducible, and
  $R_e[\kk](\ind{\Omega_i})=R_0[\kk_i]>0$.
\item $\kk$ reduced to $\Omega_0$ is quasi-nilpotent, that is
  $R_e[\kk](\ind{\Omega_0})=0$.
\item For all $\lambda\in \C^*$:
 \begin{equation}
   \label{eq:mult-vpFrob}
   \mult(\lambda, \kk)= \sum_{i\in I} \mult(\lambda, \kk_i).
  \end{equation} 
\end{propenum}
The next remark gives some elementary results related to the
Frobenius decomposition. 

\begin{remark}\label{rem:Frob}
  Recall $R_0[\kk]$ denote the spectral radius of the integral operator with
  kernel $\kk$ and that $\{\kk\equiv 0\} = \{x\in \Omega\, \colon\, \kk(x, \Omega)+
  \kk(\Omega, x)=0\}$. We have:
  \begin{propenum}
  \item If the spectral radius of the kernel $\kk$ is positive, then $I$ is
     non-empty.
  \item If the kernel $\kk$ is quasi-irreducible, then $\Omega_0=\{\kk\equiv
       0\}$ and $I$ is a singleton.
  \item\label{item:1-red} The kernel $\kk$ is monatomic if and only if $I$ is a singleton,
    say $I=\{\mathrm{a}\}$. Then the set $\oa$ is the atom of $\kk$.
  \item \label{rem:Aci-nv} If $A$ invariant implies $A^c$ invariant, then we have
    $\Omega_0=\{\kk\equiv 0\}$ and $\kk=\sum_{i\in I} \kk_i$ ($\kk$ reduced to $\Omega_0$
    is zero and intuitively $\kk$ is block diagonal).
  \item\label{item:Frob} The cardinal of the set of indices $i\in I$ such that
    $R_0[\kk_i]=R_0[\kk]$ is exactly equal to the multiplicity of $R_0[\kk]$ for $T_\kk$,
    that is $\mult(R_0[\kk], \kk)$.
  \item An eigenvalue $\lambda$ of $T_\kk$ is \emph{distinguished} if its distinguished
    multiplicity $\sharp\{ i\in I\, \colon\, R_0[\kk_i]=\lambda\}$ is positive. Notice
    that $R_0[\kk]$ is distinguished with its distinguished multiplicity equal to its
    multiplicity. Indeed if $R_0[\kk]$ is an eigenvalue of $\kk_i$, then it is its
    spectral radius and thus has multiplicity one as $\kk_i$ is quasi-irreducible. We also
    deduce that $\mult(R_0[\kk], \kk_i)\in \{0, 1\}$ for all $i\in I$.
  \end{propenum}
\end{remark}

For $i\in I$ and $\eta\in \Delta$, we set $\eta_i=\eta \ind{\Omega_i}$ and recall that
$\kk_i=\ind{\Omega_i} \kk \ind{\Omega_i}$. We now give the decomposition of $R_e[\kk]$
according to the quasi-irreducible components $(\kk_i, i\in I)$ of $\kk$. 

\begin{lemma}\label{lem:Rei}
  Let $\kk$ be a finite double norm kernel on $\Omega$ such that $R_0=R_0[\kk]>0$. We have
  for $\eta\in \Delta$:
  \begin{equation}\label{eq:R=maxRi}
    R_e[\kk](\eta)
    =\max_{i\in I} R_e[\kk_i](\eta_i)
    =\max_{i\in I} R_e[\kk](\eta \ind{\Omega_i}).
  \end{equation}
\end{lemma}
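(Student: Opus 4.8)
The plan is to prove the first equality $R_e[\kk](\eta)=\max_{i\in I} R_e[\kk_i](\eta_i)$ and to read off the second from Lemma~\ref{lem:hk/h}~\ref{lem:hk=kh}. Writing $\tilde\kk(x,y)=\kk(x,y)\eta(y)$, one has $\kk_i\eta_i=\ind{\Omega_i}\,\tilde\kk\,\ind{\Omega_i}$ and $\kk(\eta\ind{\Omega_i})=\tilde\kk\,\ind{\Omega_i}$; applying Lemma~\ref{lem:hk/h}~\ref{lem:hk=kh} with $h=\ind{\Omega_i}$ (whose support is $\Omega_i$) gives $R_e[\ind{\Omega_i}\tilde\kk\ind{\Omega_i}]=R_e[\tilde\kk\ind{\Omega_i}]$, that is $R_e[\kk_i](\eta_i)=R_e[\kk](\eta\ind{\Omega_i})$. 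If $R:=R_e[\kk](\eta)=0$, then since $\kk_i\eta_i\le\kk\eta$ pointwise and the operators and their difference are positive, \eqref{eq:r(A)r(B)} gives $R_e[\kk_i](\eta_i)=\rho(T_{\kk_i\eta_i})\le\rho(T_{\kk\eta})=0$, so both sides vanish; hence I may assume $R>0$.

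For the lower bound, the pointwise inequality $\kk_i\eta_i\le\kk\eta$ together with \eqref{eq:r(A)r(B)} yields $R_e[\kk_i](\eta_i)\le R$ for every $i$, so $\max_{i\in I}R_e[\kk_i](\eta_i)\le R$.

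The substance is the reverse inequality, for which I would apply the Frobenius decomposition of Section~\ref{sec:reducible} to the kernel $\kk\eta$ itself (it has finite double norm and, by assumption, positive spectral radius). Denote its atoms by $(\tilde\Omega_j,\,j\in J)$ and set $(\kk\eta)_j=\ind{\tilde\Omega_j}\,\kk\eta\,\ind{\tilde\Omega_j}$. By Remark~\ref{rem:Frob}~\ref{item:Frob} applied to $\kk\eta$, the multiplicity of $R$ equals the number of components with $R_0[(\kk\eta)_j]=R$, which is at least one since $R>0$ is an eigenvalue; fix such a $j^\star$, so $R=R_0[(\kk\eta)_{j^\star}]>0$. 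The key point is that the $\kk\eta$-atoms refine the $\kk$-atoms: every $\kk$-invariant set $A$ satisfies $(\kk\eta)(A^c,A)\le\kk(A^c,A)=0$, hence is $\kk\eta$-invariant, so $\ca\subset\ca_{\kk\eta}$ and $\sigma(\ca)\subset\sigma(\ca_{\kk\eta})$. Since each $\Omega_i$ and $\Omega_0$ lie in $\sigma(\ca)\subset\sigma(\ca_{\kk\eta})$ while $\tilde\Omega_{j^\star}$ is an atom of $\sigma(\ca_{\kk\eta})$, each intersection $\mu(\tilde\Omega_{j^\star}\cap\Omega_i)$ and $\mu(\tilde\Omega_{j^\star}\cap\Omega_0)$ equals $0$ or $\mu(\tilde\Omega_{j^\star})$; as they partition $\tilde\Omega_{j^\star}$ and $\mu(\tilde\Omega_{j^\star})>0$, exactly one is full. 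It cannot be $\Omega_0$, for then $(\kk\eta)_{j^\star}\le\ind{\Omega_0}\kk\ind{\Omega_0}=\kk_0$ would force $R=R_0[(\kk\eta)_{j^\star}]\le\rho(T_{\kk_0})=R_e[\kk](\ind{\Omega_0})=0$ by \eqref{eq:r(A)r(B)}, the second equality, and the quasi-nilpotency of the $\Omega_0$-block. Thus $\tilde\Omega_{j^\star}\subset\Omega_{i^\star}$ a.s.\ for some $i^\star\in I$, giving $(\kk\eta)_{j^\star}\le\ind{\Omega_{i^\star}}\kk\eta\ind{\Omega_{i^\star}}=\kk_{i^\star}\eta_{i^\star}$ and hence, by \eqref{eq:r(A)r(B)}, $R=R_0[(\kk\eta)_{j^\star}]\le R_e[\kk_{i^\star}](\eta_{i^\star})\le\max_{i\in I}R_e[\kk_i](\eta_i)$. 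Combined with the lower bound this proves the first equality.

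I expect the refinement step to be the main obstacle: one must verify carefully that invariance passes from $\kk$ to $\kk\eta$ and, crucially, that the positive-spectral-radius atoms of $\kk\eta$ cannot sit inside the quasi-nilpotent part $\Omega_0$. The remaining ingredients—the Frobenius multiplicity structure, the comparison \eqref{eq:r(A)r(B)}, and Lemma~\ref{lem:hk/h}~\ref{lem:hk=kh}—are applied essentially verbatim.
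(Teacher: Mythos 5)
Your proof is correct, and it reaches the conclusion by a genuinely different route from the paper's. The paper keeps the Frobenius partition $(\Omega_i)_{i\in I}$, $\Omega_0$ of the \emph{original} kernel $\kk$ fixed and establishes the exact multiplicity identity $\mult(\lambda, \kk\eta, \Omega)=\sum_{i\in I}\mult(\lambda,\kk\eta,\Omega_i)$ for every $\lambda\in\C^*$, by combining Lemma~\ref{lem:ext-61} with an adaptation of Schwartz's Theorem~7 to the effective kernel $\kk\eta$ (which remains block-triangular for the same order $\preccurlyeq$); the max formula then drops out for the whole spectrum at once, and this finer identity \eqref{eq:mult-vpFrob2} is reused afterwards to obtain \eqref{eq:kk=tilde-kk}. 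You instead prove a two-sided inequality: the easy bound $\max_i R_e[\kk_i](\eta_i)\le R$ by positivity and \eqref{eq:r(A)r(B)}, and the reverse bound by running the Frobenius decomposition on $\kk\eta$ itself and showing that any atom of $\kk\eta$ carrying the spectral radius must sit (a.s.) inside some $\Omega_{i^\star}$. The two pillars of that step are sound: $\kk$-invariance implies $\kk\eta$-invariance since $\kk\eta\le\kk$, so $\sigma(\ca)\subset\sigma(\ca_{\kk\eta})$ and the measure-theoretic atom dichotomy applies to the countable partition $\Omega_0,(\Omega_i)_{i\in I}$; and the $\Omega_0$ alternative is correctly ruled out by quasi-nilpotency together with monotonicity. (Two minor points you use implicitly and could flag: the index set $J$ only lists the atoms of $\sigma(\ca_{\kk\eta})$ with irreducible, positive-radius restriction, but each $\tilde\Omega_j$ is still a measure-theoretic atom of that $\sigma$-field, which is all your dichotomy needs; and the existence of $j^\star$ with $R_0[(\kk\eta)_{j^\star}]=R$ can be obtained from \eqref{eq:mult-vpFrob} applied to $\kk\eta$ with any peripheral eigenvalue $\lambda$, $|\lambda|=R$, without invoking that $R$ itself is an eigenvalue.) The trade-off is clear: your argument treats the Frobenius decomposition as a black box and avoids adapting Schwartz's multiplicity theorem, but it yields only the spectral-radius identity, not the multiplicity decomposition \eqref{eq:mult-vpFrob2} that the paper exploits later.
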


\begin{proof}
  For $A\in \cf$, recall  $\mult(\lambda, \kk, A)$ denotes the multiplicity (possibly
  equal to 0) of the eigenvalue $\lambda\in \C^*$ for the integral operator $T_{\kk
  \ind{A}}$ associated to the kernel $\kk\ind{A}$. Let $A, B\in \cf$ be such that $A\cap
  B=\emptyset$ a.s.\ and $\kk(B,A)=0$. Let $\eta \in \Delta$. Clearly we have $(\kk \eta)
  (B, A)=0$, and thus Lemma~\ref{lem:ext-61} gives that for all $\eta\in \Delta$:
  \[
    \mult(\lambda, \kk \eta, A\cup B)= \mult(\lambda, \kk \eta, A) + \mult(\lambda,
    \kk \eta, B). 
  \]
  Then, an immediate adaptation of the proof of \cite[Theorem~7]{schwartz61} gives that
  for all $\lambda\in \C^*$:
  \begin{equation}\label{eq:mult-vpFrob2}
    \mult(\lambda, \kk \eta, \Omega)=\sum_{i\in I} \mult(\lambda, \kk \eta, \Omega_i).
  \end{equation} 
  By definition of $\mult(\lambda, \cdot, \cdot)$, we get $
  R_e[\kk](\eta)=\max \{|\lambda|\, \colon\, \mult(\lambda, \kk \eta,
  \Omega)>0\}$ and $
  R_e[\kk\ind{\Omega_i}](\eta)= \max \{|\lambda|\, \colon\, \mult(\lambda,
  \kk \eta, \Omega_i)>0\}$.
  This gives that:
  \[
    R_e[\kk](\eta)= \max_{i\in I} R_e[\kk\ind{\Omega_i}](\eta ).
  \]
  To conclude, notice that $R_e[\kk](\eta \ind{\Omega_i} )=R_e[\kk\ind{\Omega_i}](\eta
  )=R_e[\ind{\Omega_i} \kk\ind{\Omega_i}](\eta )=R_e[\kk_i](\eta_i )$, where we used
  Lemma~\ref{lem:hk/h}~\ref{lem:hk=kh} for the second equality. 
\end{proof}

From Lemma~\ref{lem:Rei}, we deduce the following result.

\begin{lemma}\label{lem:conc-mono}
  Let $\kk$ be a finite double norm kernel on $\Omega$ such that $R_0=R_0[\kk]>0$. If the
  function $R_e[\kk]$ is concave on $\Delta$, then the kernel $\kk$ is monatomic.
\end{lemma}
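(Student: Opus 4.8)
The plan is to argue by contradiction, exploiting the fact that by Lemma~\ref{lem:Rei} the function $R_e[\kk]$ is a \emph{maximum} of functions indexed by the atoms of the Frobenius decomposition, and that such a maximum of non-trivial pieces cannot be concave once there are at least two atoms. Recall from Remark~\ref{rem:Frob}~\ref{item:1-red} that $\kk$ is monatomic if and only if the index set $I$ is a singleton, and that $I$ is non-empty since $R_0[\kk]>0$. So it suffices to rule out $\sharp I\geq 2$.

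Suppose then that $I$ contains two distinct indices $i_1\neq i_2$. I would test concavity on the pair of strategies $\eta_1=\ind{\Omega_{i_1}}$ and $\eta_2=\ind{\Omega_{i_2}}$. By property~(ii) of the Frobenius decomposition we have $R_e[\kk](\eta_k)=R_0[\kk_{i_k}]>0$ for $k=1,2$. Writing $\zeta=\tfrac12(\eta_1+\eta_2)$ and using Lemma~\ref{lem:Rei} together with the fact that the sets $\Omega_i$ are pairwise disjoint, I would compute
\[
  R_e[\kk](\zeta)=\max_{i\in I} R_e[\kk](\zeta\ind{\Omega_i})
  =\max\Big(R_e[\kk](\tfrac12\ind{\Omega_{i_1}}),\, R_e[\kk](\tfrac12\ind{\Omega_{i_2}})\Big),
\]
since $\zeta\ind{\Omega_i}=0$ for every $i\notin\{i_1,i_2\}$. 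The homogeneity of $R_e$ (Proposition~\ref{prop:R_e}~\ref{prop:normal}) then turns the right-hand side into $\tfrac12\max(R_0[\kk_{i_1}],R_0[\kk_{i_2}])$.

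The contradiction comes from comparing this with the concavity inequality. Concavity forces
\[
  R_e[\kk](\zeta)\geq \tfrac12 R_e[\kk](\eta_1)+\tfrac12 R_e[\kk](\eta_2)
  =\tfrac12\big(R_0[\kk_{i_1}]+R_0[\kk_{i_2}]\big),
\]
so that $\max(R_0[\kk_{i_1}],R_0[\kk_{i_2}])\geq R_0[\kk_{i_1}]+R_0[\kk_{i_2}]$. Since both summands are strictly positive, this is impossible. Hence $I$ is a singleton and $\kk$ is monatomic.

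There is no serious obstacle here: once Lemma~\ref{lem:Rei} is in hand, the argument reduces to a one-line incompatibility between the max-structure and concavity, and the only points requiring a little care are, first, recording that the atomic reproduction numbers $R_0[\kk_{i}]$ are strictly positive, which is exactly property~(ii) of the Frobenius decomposition, and second, invoking homogeneity to pass from $\ind{\Omega_{i_k}}$ to $\tfrac12\ind{\Omega_{i_k}}$. The conceptual content is simply that evaluating along the indicators of two distinct irreducible components isolates each component, so that concavity would demand the value at the midpoint to dominate the \emph{sum} of the two component peaks rather than merely their maximum.
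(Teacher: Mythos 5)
Your proof is correct and follows essentially the same route as the paper: both arguments invoke Lemma~\ref{lem:Rei} together with the homogeneity of $R_e$ to show that, once two atoms $\Omega_{i_1},\Omega_{i_2}$ exist, the restriction of $R_e[\kk]$ to a segment joining (multiples of) their indicators violates concavity. The only difference is cosmetic: the paper rescales the second indicator so that the segment realizes the non-concave profile $\theta\mapsto\max(\theta,1-\theta)$, whereas you evaluate directly at the midpoint and derive the impossible inequality $\max(a,b)\geq a+b$ for $a,b>0$.
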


\begin{proof}
  Since $R_0[\kk]$ is positive, we deduce that $\kk$ is not quasi-nilpotent. Suppose that
  $\kk$ is not monatomic. This means that the cardinal of the at most countable set $I$ in
  the decomposition~\eqref{eq:R=maxRi} is at least $2$. So let $\kk_{1}$ and $\kk_{2}$ be
  two quasi-irreducible components of $\kk$, where we assume that $\{1,2 \} \subset I$.
  Let $\Omega_{1}$ and $\Omega_{2}$ denote their respective atoms. Without loss of
  generality, we can suppose that $R_0[\kk_{2}] \geq R_0[\kk_{1}]>0$. Consider the
  strategies $\eta_1=\ind{\Omega_{1}}$ and $\eta_2=R_0[\kk_{1}] \, R_0[\kk_{2}]^{-1}\,
  \ind{\Omega_{2}}$ (which both belong to $\Delta$). For $\theta\in [0, 1]$, we deduce
  from \eqref{eq:R=maxRi} and the homogeneity of the spectral radius that $R_e[\kk](\theta
  \eta_1 + (1-\theta) \eta_2)= R_e[\kk_1] \max(\theta, 1-\theta)$. Since $\theta\mapsto
  \max(\theta, 1-\theta)$ is not concave, we deduce that $R_e[\kk]$ is not concave on
  $\Delta$.
\end{proof}

Set $\tilde \kk=\sum_{i\in I} \kk_i$. As a consequence of~\eqref{eq:mult-vpFrob2}, we have
that:
\begin{equation}\label{eq:kk=tilde-kk}
  \spec[\kk]=\spec[\tilde \kk] \quad\text{and}\quad R_e[\kk]=R_e[\tilde \kk].
\end{equation}
In view of Section~\ref{sec:equivalent}, \eqref{eq:kk=tilde-kk} gives an other
transformation of the kernel $\kk$ which leaves the function $\spec[\kk]$ unchanged. We
represent in Figure~\ref{fig:atomic1} an example of a kernel $\kk$ with its atomic
decomposition using $\preccurlyeq$ as a partial order on $\Omega$ and in
Figure~\ref{fig:atomic2} the corresponding kernel $\tilde \kk$.

\begin{figure}
  \begin{subfigure}[T]{.5\textwidth}
    \centering
    \includegraphics[page=1]{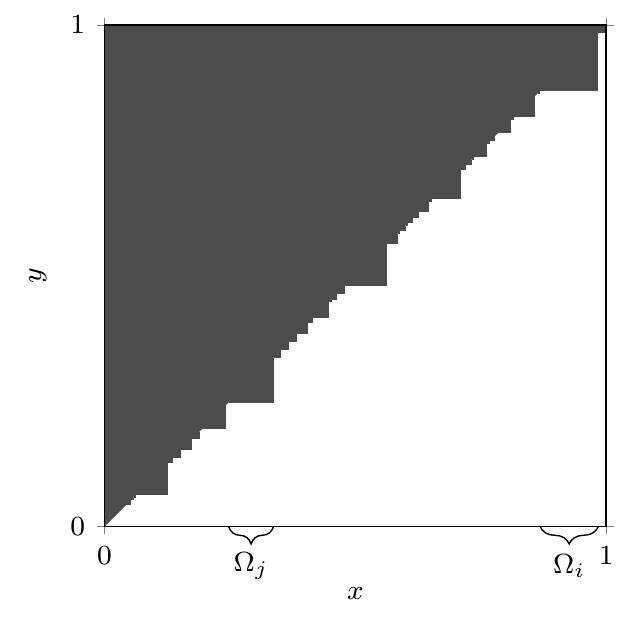}
    \caption{A representation of the kernel $\kk$.}
    \label{fig:atomic1}
  \end{subfigure}%
  \begin{subfigure}[T]{.5\textwidth}
    \centering
    \includegraphics[page=2]{reducible}
    \caption{A representation of the kernel $\tilde \kk=\sum_{i\in I}
      \kk_i$. We have $\spec[\kk]=\spec[\tilde \kk]$ and thus
    $R_e[\kk]=R_e[\tilde \kk]$.}
    \label{fig:atomic2}
  \end{subfigure}
  \caption{Example of a kernel $\kk$ with the white zone included in
    $\{\kk=0\}$ and the kernel $\tilde\kk =\sum_{i\in I} \kk_i$, with
    $\kk_i=\ind{\Omega_i}\kk\ind{\Omega_i}$ and $\kk(\Omega_i,
    \Omega_j)=0$ for $j\prec i $.}
  \label{fig:atomic}
\end{figure}

We set $R_0=R_0[\kk]$. For $i\in I$, we consider the loss $R_e[\kk_i]$ and the
corresponding optimal loss function $R^\star_i$ defined on $[0, \cmax]$ and optimal cost
function $C^\star_i$. For convenience the function $C^\star_i$ which is defined on $[0,
R_0[\kk_i]]$ is extended to $[0, R_0]$ by setting $C^\star_i=0$ on $(R_0[\kk_i], R_0]$. 
Notice also that $\{\kk_i\equiv 0\}=\Omega_i^c$. Recall that $\cmax=C(\un)$. We now state
the main result of this section, which in particular gives a description of the
anti-Pareto frontier. 

\begin{corollary}\label{cor:loss-pb}
  Suppose that the cost function $C$ is continuous decreasing with $C(\un)=0$  and
  consider the loss function $R_e[\kk]$, with $\kk$ a finite double norm kernel on
  $\Omega$ such that $R_0=R_0[\kk]>0$. We  have:
  \[
    R_e^\star= \max_{i\in I}\, R^\star_i \quad\text{(on $[0, \cmax]$), and}\quad
    \Csup= \max_{i\in I}\, C^\star_i \quad\text{(on $[0, R_0]$)};
  \]
  the maximal cost of totally inefficient strategies is given by:
  \[
    \cmar=\Csup(R_0)=\max_{ i\in I} \{C(\ind{\Omega_i})\, \colon\,
    R_0[\kk_i]=R_0[\kk]\};
  \]
  and the anti-Pareto frontier is given by: 
  \begin{equation}\label{eq:rep-AF}
    \AF = \left\{ \left( \max_{i\in I} C^\star_i (\ell), \ell\right) \,
    \colon \, \ell \in [0, R_0] \right\}.
  \end{equation}
  Furthermore, we have for $\ell\in [0, R_0]$:
  \[
    \Cinf(\ell)=C\left(\eta_\star\right)
    \quad\text{with}\quad \eta_\star=\ind{\Omega_0}+ \sum_{i\in I} \eta_{i,\star},
  \]
  where $\eta_\star$ is Pareto optimal with $R_e[\kk](\eta_\star)=\ell$, and, for $i\in
  I$, the strategy $\eta_{i, \star}=\eta_\star\, \ind{\Omega_i}$ restricted to $\Omega_i$
  is  Pareto optimal for the kernel $\kk_i$ restricted to $\Omega_i$, with
  $R_e[\kk_i](\eta_{i, \star})=\min(\ell, R_0[\kk_i])$. We also have an upper bound for
  the minimal cost which ensures that no infection occurs at all:
  \[
    \cmir=\Cinf(0)\leq C(\ind{\Omega_0}).
  \]
\end{corollary}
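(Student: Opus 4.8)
The backbone of the whole statement is the blockwise identity $R_e[\kk](\eta)=\max_{i\in I}R_e[\kk_i](\eta_i)$ of Lemma~\ref{lem:Rei}, which converts each constraint on $R_e[\kk]$ into a family of constraints carried by the pairwise disjoint blocks $\Omega_i$ (with $\Omega_0$ unconstrained). I would first treat the anti-Pareto quantities. For $\mar$ the cost constraint $\{C(\eta)\ge c\}$ is common to all blocks, so exchanging the two suprema gives $\mar(c)=\sup_{C(\eta)\ge c}\max_i R_e[\kk_i](\eta)=\max_i R^\star_i(c)$. For $\Csup$ the feasible set $\{R_e[\kk](\eta)\ge \ell\}$ is the \emph{union} over $i$ of the sets $\{R_e[\kk_i](\eta)\ge \ell\}$, and the maximum of $C$ over a union is the maximum of the maxima, whence $\Csup(\ell)=\max_i C^\star_i(\ell)$. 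The convention $C^\star_i=0$ on $(R_0[\kk_i],R_0]$ is harmless: such a block contributes an empty (hence negligible) feasible set, while by Remark~\ref{rem:Frob}~\ref{item:Frob} at least one block satisfies $R_0[\kk_i]=R_0\ge \ell$, keeping the union nonempty; as all costs are nonnegative the spurious zeros never alter the maximum.

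The frontier formula \eqref{eq:rep-AF} then follows by plugging $\Csup=\max_i C^\star_i$ into the general representation $\AF=\{(\Csup(\ell),\ell)\colon \ell\in[0,R_0]\}$ of~\eqref{eq:FL=L*}. To evaluate $\cmar=\Csup(R_0)$, note that a block with $R_0[\kk_i]<R_0$ gives $C^\star_i(R_0)=0$, whereas a block with $R_0[\kk_i]=R_0$ is quasi-irreducible, hence monatomic with atom $\Omega_i$; the monatomic anti-Pareto theory recalled above (namely $\cmar=C(\ind{\oa})$, from \cite[Lemma~5.13]{ddz-theo}) then gives $C^\star_i(R_0)=C^\star_i(R_0[\kk_i])=C(\ind{\Omega_i})$, and the maximum over $i$ is the announced expression. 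The bound $\cmir=\Cinf(0)\le C(\ind{\Omega_0})$ is immediate from Lemma~\ref{lem:Rei}: since $\ind{\Omega_0}\ind{\Omega_i}=0$ for every $i$, we have $R_e[\kk](\ind{\Omega_0})=\max_i R_e[\kk_i](0)=0$, so $\ind{\Omega_0}$ is admissible for the loss level $0$.

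The Pareto ($\Cinf$) decomposition is the delicate point, and I would establish it by \emph{reading off} the structure of a genuine global minimizer rather than by optimizing blockwise. The general theory provides a Pareto optimal $\eta_\star$ with $R_e[\kk](\eta_\star)=\ell$ and $C(\eta_\star)=\Cinf(\ell)$, so the identity $\Cinf(\ell)=C(\eta_\star)$ holds by construction; it remains to verify its block structure by marginal arguments, each exploiting that $C$ is decreasing and that altering $\eta$ on one block leaves the losses of all other blocks unchanged. On $\Omega_0$: if $\eta_\star<1$ on a set of positive measure, raising it to $1$ there preserves $R_e[\kk]$ but strictly increases $\int\eta_\star$, hence strictly lowers $C$, contradicting optimality; thus $\eta_\star=\ind{\Omega_0}$ on $\Omega_0$. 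For each block one has $R_e[\kk_i](\eta_\star)\le\min(\ell,R_0[\kk_i])$; were the inequality strict, then $\eta_\star\ind{\Omega_i}\ne\ind{\Omega_i}$ on a set of positive measure, and sliding $\eta_\star\ind{\Omega_i}$ slightly towards $\ind{\Omega_i}$ keeps the block loss $\le\ell$ (continuity of $R_e[\kk_i]$, Proposition~\ref{prop:R_e}) while raising the integral, again contradicting minimality; hence $R_e[\kk_i](\eta_\star)=\min(\ell,R_0[\kk_i])$. Finally, $\eta_{i,\star}=\eta_\star\ind{\Omega_i}$ is Pareto optimal for $\kk_i$ with respect to the induced cost $\zeta\mapsto C(\eta_\star\text{ with block }i\text{ replaced by }\zeta)$: any cheaper block-$i$ competitor of loss $\le\min(\ell,R_0[\kk_i])$ would, after substitution, produce a globally admissible strategy of strictly smaller cost.

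The genuine obstacle is this last step. For a general (non-separable) decreasing cost one cannot minimize block by block and recombine, since coordinatewise optimality need not be global optimality; the argument is therefore forced to run from a known global optimum \emph{down} to its local block properties, global optimality being exactly what rules out any single-block improvement. Two points require care: phrasing ``Pareto optimal for $\kk_i$ restricted to $\Omega_i$'' through the induced (still continuous and decreasing) cost obtained by freezing the other blocks, and handling the boundary case $\eta_\star\ind{\Omega_i}=\ind{\Omega_i}$ (which forces $R_0[\kk_i]\le\ell$) so that the equality $R_e[\kk_i](\eta_\star)=\min(\ell,R_0[\kk_i])$ is valid in every case.
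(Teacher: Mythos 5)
Your proposal is correct and follows essentially the same route as the paper: everything rests on the blockwise identity of Lemma~\ref{lem:Rei}, the anti-Pareto statements follow by exchanging maxima and plugging into \eqref{eq:FL=L*}, and the Pareto decomposition is obtained by the same exchange argument that starts from a global Pareto optimum and rules out any single-block improvement using that $C$ is decreasing. The only (harmless) variations are that you prove $\cmir\leq C(\ind{\Omega_0})$ by direct admissibility of $\ind{\Omega_0}$ rather than via the inequality $\eta_\star\geq\ind{\Omega_0}$, and that you are somewhat more explicit than the paper about the induced cost giving meaning to ``Pareto optimal for $\kk_i$ restricted to $\Omega_i$''.
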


\begin{rem}\label{rem:C*_(0)=}
  We easily deduce from the previous corollary that $\Cinf(0)$ is in fact equal to the
  cost of $\ind{\Omega_0\cup A}$ where $A=\cup_{i\in I} A_i$ and, for all $i\in I$, 
  $A_i\subset \Omega_i$ is a $C$-maximal independent set associated to the kernel $\kk_i$,
  see Remark~\ref{rem:alpha=C(A)}.
\end{rem}

\begin{remark}\label{rem:L-discont}
  If $\kk$ is not monatomic, then Assumption 7 in \cite{ddz-theo} (that is any local
  maximum of the loss function is also a global maximum) may or may not be satisfied for
  the loss function $\loss=R_e[\kk]$, see the case of the two population model in
  \cite{ddz-2pop}. In the former case the function $C ^\star$ is continuous and the
  anti-Pareto frontier is connected, whereas in the latter case the function $C ^\star$
  may have jumps and then the anti-Pareto frontier has more than one connected component. 
\end{remark}

\begin{proof}
  Equation \eqref{eq:R=maxRi} and the definition of $R_e^\star$ readily implies that $
  R_e^\star= \max_{i\in I}\, R^\star_i$.

  We set $R_0=R_0[\kk]$ and recall that $R_e[\kk_i](\un)=R_0[\kk_i]$. Let $\ell \in (0,
  R_0]$. Notice that \eqref{eq:mult-vpFrob} implies that there is a finite number of
  indices $i\in I$ such that $R_0[\kk_i]\geq \ell$. This and \eqref{eq:R=maxRi} readily
  implies that $ \Csup(\ell)= \max_{i\in I}\, C^\star_i(\ell)$ for $\ell>0$. Use that
  $\Csup(0)=C ^\star_ i(0)=\cmax$ to deduce that the equality $ C^\star= \max_{i\in I}\,
  C^\star_i$ holds on $[0, R_0]$. The formula for $\cmar=\Csup(R_0)$ is a consequence of
  \eqref{eq:R=maxRi}, Lemma \cite[Lemma~5.14]{ddz-theo} and
  Remark~\ref{rem:Frob}~\ref{item:Frob}. The formula~\eqref{eq:rep-AF} for
  $\mathcal{F}^\mathrm{Anti} $ is then a consequence of \eqref{eq:FL=L*}.

  Eventually, if $\eta_\star$ is Pareto optimal with $R_e[\kk](\eta_\star)=\ell$, we
  deduce from~\eqref{eq:R=maxRi} that $R_e[\kk](\eta_\star \ind{\Omega_0^c})$ is also
  equal to $\ell$, and since $C$ is decreasing, this implies that $\eta_\star \geq
  \ind{\Omega_0}$ and thus $\eta_\star= \ind{\Omega_0} + \sum_{i\in I} \eta_{i,\star}$
  with $\eta_{i, \star}=\eta_\star\, \ind{\Omega_i}$. Now if $\eta_{i, \star}$ were not
  Pareto optimal for the kernel $\kk_i$ restricted to $\Omega_i$ or if
  $R_e[\kk_i](\eta_{i, \star})<\min(\ell, R_0[\kk_i])$, we could increase $\eta_\star$ on
  $\Omega_i$ without changing the value of $R_e[\kk]$, and thus $\eta_\star$ would not be
  Pareto optimal. Thus, we get that $\eta_{i, \star}$ is Pareto optimal for the kernel
  $\kk_i$ restricted to $\Omega_i$, that is, $\eta_{i, \star}+ \ind{\Omega_0}$ is Pareto
  optimal for the kernel $\kk_i$, and that $R_e[\kk_i](\eta_{i, \star})=\min(\ell,
  R_0[\kk_i])$. From the inequality $\eta_\star \geq \ind{\Omega_0}$, we deduce that $
  \cmir=\Cinf(0)\leq C(\ind{\Omega_0})$. 
\end{proof}

\printbibliography

\end{document}